\pdfoutput=1
\documentclass[11pt]{article}
\usepackage{authblk}
\usepackage[toc,page]{appendix}
\usepackage[top=2cm, bottom=2cm, left=2cm, right=2cm]{geometry}


\usepackage{color}
\usepackage{helvet}         
\usepackage{courier}        
\usepackage{type1cm}        

\usepackage{framed}
\usepackage{tikz}
\usepackage{makeidx}         
\usepackage{graphicx}        
\usepackage{multicol}        
\usepackage[bottom]{footmisc}

\usepackage{amsmath}
\usepackage{amssymb}
\usepackage{bbold}
\usepackage{amsthm}
\usepackage{subcaption}
\usepackage{sidecap}
\usepackage{floatrow}
\usepackage{pdflscape}
\usepackage{pdflscape}
\usepackage{comment}
\usepackage[font=small]{caption}
\usepackage{enumitem}
\usepackage{esint}


\usepackage{scalerel}

\newtheorem{theorem}{Theorem}
\newtheorem{corollary}[theorem]{Corollary}
\newtheorem{lemma}[theorem]{Lemma}

\newtheorem{proposition*}{Proposition}
\newtheorem{lemma*}{Lemma}

\theoremstyle{remark}

\numberwithin{theorem}{section}
\numberwithin{question}{section}
\numberwithin{figure}{section}
\numberwithin{equation}{section}






\begin{document}

\title{Critical Ising model, Multiple $\SLE_\kappa\left(\frac{\kappa-6}{2},\frac{\kappa-6}{2}\right)$ and $\beta$-Jacobi Ensemble}
\bigskip{}
\author{Mingchang Liu\thanks{liumc\_prob@163.com}}
\affil{KTH Royal Institute of Technology, Sweden}
\date{}

%

%

\global\long\def\CR{\mathrm{CR}}
\global\long\def\ST{\mathrm{ST}}
\global\long\def\SF{\mathrm{SF}}
\global\long\def\cov{\mathrm{cov}}
\global\long\def\dist{\mathrm{dist}}
\global\long\def\SLE{\mathrm{SLE}}
\global\long\def\hSLE{\mathrm{hSLE}}
\global\long\def\CLE{\mathrm{CLE}}
\global\long\def\GFF{\mathrm{GFF}}
\global\long\def\inte{\mathrm{int}}
\global\long\def\ext{\mathrm{ext}}
\global\long\def\inrad{\mathrm{inrad}}
\global\long\def\outrad{\mathrm{outrad}}
\global\long\def\dimH{\mathrm{dim}}
\global\long\def\capa{\mathrm{cap}}
\global\long\def\diam{\mathrm{diam}}
\global\long\def\free{\mathrm{free}}
\global\long\def\hF{{}_2\mathrm{F}_1}
\global\long\def\ghF{{}_3\mathrm{F}_2}
\global\long\def\simple{\mathrm{simple}}
\global\long\def\even{\mathrm{even}}
\global\long\def\odd{\mathrm{odd}}
\global\long\def\st{\mathrm{ST}}
\global\long\def\usf{\mathrm{USF}}
\global\long\def\Leb{\mathrm{Leb}}
\global\long\def\LP{\mathrm{LP}}
\global\long\def\coulomb{\LH}
\global\long\def\coulombnew{\LG}
\global\long\def\kfunc{p}
\global\long\def\OO{\mathcal{O}}
\global\long\def\Dist{\mathrm{Dist}}
\global\long\def\ball{D}

\global\long\def\eps{\epsilon}
\global\long\def\ov{\overline}
\global\long\def\U{\mathbb{U}}
\global\long\def\T{\mathbb{T}}
\global\long\def\HH{\mathbb{H}}
\global\long\def\LA{\mathcal{A}}
\global\long\def\LB{\mathcal{B}}
\global\long\def\LC{\mathcal{C}}
\global\long\def\LD{\mathcal{D}}
\global\long\def\LF{\mathcal{F}}
\global\long\def\LK{\mathcal{K}}
\global\long\def\LE{\mathcal{E}}
\global\long\def\LG{\mathcal{G}}
\global\long\def\LI{\mathcal{I}}
\global\long\def\LJ{\mathcal{J}}
\global\long\def\LL{\mathcal{L}}
\global\long\def\LM{\mathcal{M}}
\global\long\def\LN{\mathcal{N}}
\global\long\def\LQ{\mathcal{Q}}
\global\long\def\LR{\mathcal{R}}
\global\long\def\LT{\mathcal{T}}
\global\long\def\LS{\mathcal{S}}
\global\long\def\LU{\mathcal{U}}
\global\long\def\LV{\mathcal{V}}
\global\long\def\LW{\mathcal{W}}
\global\long\def\LX{\mathcal{X}}
\global\long\def\LY{\mathcal{Y}}
\global\long\def\PartF{\mathcal{Z}}
\global\long\def\LH{\mathcal{H}}
\global\long\def\LJ{\mathcal{J}}
\global\long\def\R{\mathbb{R}}
\global\long\def\C{\mathbb{C}}
\global\long\def\N{\mathbb{N}}
\global\long\def\Z{\mathbb{Z}}
\global\long\def\E{\mathbb{E}}
\global\long\def\PP{\mathbb{P}}
\global\long\def\QQ{\mathbb{Q}}
\global\long\def\A{\mathbb{A}}
\global\long\def\one{\mathbb{1}}
\global\long\def\bn{\mathbf{n}}
\global\long\def\MR{MR}
\global\long\def\cond{\,|\,}
\global\long\def\la{\langle}
\global\long\def\ra{\rangle}
\global\long\def\tree{\Upsilon}
\global\long\def\prob{\mathbb{P}}
\global\long\def\hm{\mathrm{Hm}}
\global\long\def\cross{\mathrm{Cross}}

\global\long\def\sf{\mathrm{SF}}
\global\long\def\wr{\varrho}

\global\long\def\Im{\operatorname{Im}}
\global\long\def\Re{\operatorname{Re}}

\global\long\def\ud{\mathrm{d}}
\global\long\def\pder#1{\frac{\partial}{\partial#1}}
\global\long\def\pdder#1{\frac{\partial^{2}}{\partial#1^{2}}}
\global\long\def\der#1{\frac{\ud}{\ud#1}}

\global\long\def\bZnn{\mathbb{Z}_{\geq 0}}

\global\long\def\Vfunc{\LG}
\global\long\def\gfunc{g^{(\rr)}}
\global\long\def\hfunc{h^{(\rr)}}

\global\long\def\SimplexInt{\rho}
\global\long\def\CubeInt{\widetilde{\rho}}

\global\long\def\ii{\mathfrak{i}}
\global\long\def\rr{\mathfrak{r}}
\global\long\def\chamber{\mathfrak{X}}
\global\long\def\Wchamber{\mathfrak{W}}

\global\long\def\SimplexIntKappa8{\SimplexInt}

\global\long\def\nested{\boldsymbol{\underline{\Cap}}}
\global\long\def\unnested{\boldsymbol{\underline{\cap\cap}}}
\global\long\def\unnested{\boldsymbol{\underline{\cap\cap}}}

\global\long\def\acycle{\vartheta}
\global\long\def\bcycle{\tilde{\acycle}}
\global\long\def\Gloop{\Theta}

\global\long\def\metric{\mathrm{dist}}

\global\long\def\adj#1{\mathrm{adj}(#1)}

\global\long\def\bs{\boldsymbol}

\global\long\def\edge#1#2{\langle #1,#2 \rangle}
\global\long\def\graph{G}

\newcommand{\conn}{\vartheta}
\newcommand{\hatconn}{\widehat{\vartheta}_{\mathrm{RCM}}}
\newcommand{\realpt}{\smash{\mathring{x}}}
\newcommand{\corrind}{\LC}
\newcommand{\bssymb}{\pi}
\newcommand{\PRCM}{\mu}
\newcommand{\coeff}{p}
\newcommand{\MainConst}{C}

\global\long\def\removeLink{/}
\maketitle

\begin{center}
\begin{minipage}{0.95\textwidth}
\abstract{
Fix $N\ge 1$ and suppose that $(\Omega;x_1,\ldots, x_{N}; x_{N+1}, x_{N+2})$ is a (topological) polygon, i.e. $\Omega$ is a simply connected domain with locally connected boundary $\partial\Omega$ and $x_1,\ldots,x_{N+2}$ are $N+2$ different points located counterclockwisely on $\partial\Omega$. Fix $\kappa\in (0,4)$. In this paper, we will give two different constructions of multiple $N$-$\SLE_\kappa\left(\frac{\kappa-6}{2},\frac{\kappa-6}{2}\right)$ on $(\Omega;x_1,\ldots,x_{N}; x_{N+1},x_{N+2})$ and prove that they give the same law on random curves. Then, by establishing the uniqueness of multiple $N$-$\SLE_\kappa\left(\frac{\kappa-6}{2},\frac{\kappa-6}{2}\right)$, we can obtain the joint law of the hitting points of multiple $N$-$\SLE_\kappa\left(\frac{\kappa-6}{2},\frac{\kappa-6}{2}\right)$ with odd (resp. even) indices on $(x_{N+1}x_{N+2})$. After shrinking  $x_1,\ldots,x_N$ to one point, the law of hitting points with odd (resp. even) indices will converge to $\beta$-Jacobi ensemble with the conjectured relation $\beta=\frac{8}{\kappa}$. We will establish a direct connection between $\SLE$-type curves and $\beta$-Jacobi ensemble.

As an application, we consider critical Ising model on a discrete topological polygon $(\Omega^\delta_\delta;x^\delta_1,\ldots,x^\delta_{N}; x^\delta_{N+1},x^\delta_{N+2})$ on $\delta\Z^2$ with alternating boundary conditions on $(x^\delta_{N+2}x^\delta_{N+1})$ and free boundary condition on $(x^\delta_{N+1}x^\delta_{N+2})$. Motivated by the partition function of multiple $N$-$\SLE_\kappa\left(\frac{\kappa-6}{2},\frac{\kappa-6}{2}\right)$, we will derive the scaling limit of the probability of the event that the interface $\gamma_j^\delta$ starting from $x^\delta_j$ ends at $(x^\delta_{N+1}x^\delta_{N+2})$ for all $1\le j\le N$. Moreover, we will prove that given this event, the interface $(\gamma_1^\delta,\ldots,\gamma_N^\delta)$ converges to multiple $N$-$\SLE_\kappa\left(\frac{\kappa-6}{2},\frac{\kappa-6}{2}\right)$ with $\kappa=3$.
 }

\bigskip{}

\noindent\textbf{Keywords:} Ising model, Schramm-Loewner evolution, Gaussian free field flow lines, $\beta$-Jacobi ensemble\\ 

\noindent\textbf{MSC:} 60J67
\end{minipage}
\end{center}

\tableofcontents

\section{Introduction}
In 1999, Schramm introduced Schramm-Loewner evolution (SLE) as a candidate for the scaling limit of interfaces in two-dimensional lattice model at criticality. There are several models whose interfaces have been proved to converge to SLE process: loop-erased random walk and uniform spanning tree~\cite{LawlerSchrammWernerLERWUST}, percolation~\cite{SmirnovPercolationConformalInvariance}, level lines of Gaussian free field (GFF)~\cite{SchrammSheffieldDiscreteGFF}, Ising and FK-Ising model~\cite{ChelkakSmirnovIsing, CDCHKSConvergenceIsingSLE}. In these cases,  from the conformal invariance and domain Markov property, the boundary condition ensures that the driving function of random curve in the scaling limit is a Brownian motion $(B_{\kappa t}:t\ge 0)$ for some $\kappa>0$. When the boundary conditions are more complicated, the drift term of the driving function is non-zero and it will be related to certain partition functions. For instance, in~\cite{FengPeltolaWuConnectionProbaFKIsing,FengLiuPeltolaWu2024,LiuPeltolaWuUST}, the authors considered alternating boundary condition and the boundaries are allowed to be connected outside of the domain via planar partitions. 
In these cases, one can identify the conditional law of one interface given others in the discrete setting and thus the scaling limit should be multiple $\SLE_\kappa$: the conditional law of one curve given others equals the law of $\SLE_\kappa$ in the corresponding domain. 

In this paper, we will consider critical Ising model in the same setting as in~\cite{IzyurovObservableFree}, where both $+$ boundary, $-$ boundary and free boundary appear.  Fix $N\ge 1$ and suppose that $(\Omega;x_1,\ldots, x_{N}; x_{N+1}, x_{N+2})$ is a (topological) polygon, i.e. $\Omega$ is a simply connected domain with locally connected boundary $\partial\Omega$ and $x_1,\ldots,x_{N+2}$ are $N+2$ different boundary points located counterclockwisely on $\partial\Omega$. Denote by $(\Omega_\delta;x^\delta_1,\ldots,x^\delta_{N}; x^\delta_{N+1},x^\delta_{N+2})$ its discrete approximation on $\delta\Z^2$. We consider the critical Ising model on $\Omega_\delta$ with the following boundary conditions (we use the convention that $x^\delta_0:=x^\delta_{N+2}$):
\begin{equation}\label{eqn::boundary_data}
(-1)^j \text{ along }(x^\delta_{j-1}x^\delta_j)\quad\text{for }1\le j\le N+1,\quad\text{and free on }(x^\delta_{N+1}x^\delta_{N+2}).
\end{equation}
Under~\eqref{eqn::boundary_data}, there exist $N$ interfaces $\gamma^\delta_1,\ldots,\gamma^\delta_N$ such that $\gamma^\delta_i$ starts from $x^\delta_i$ for $1\le i\le N$. Each $\gamma^\delta_i$ will end at $x^\delta_j$ for some $j\neq i$ or at the free boundary $(x^\delta_{N+1}x^\delta_{N+2})$.  In~\cite{IzyurovObservableFree}, the author proved the convergence of  $\gamma^\delta_1$ stopped at a time before it ends, when the mesh of lattice tends to zero. (See~\cite[Equation 3.6 and equation 3.7]{FengWuYangIsing} for a more explicit formula of the driving function).
We focus on the event $A_\delta=A_\delta(\Omega_\delta;x_1^\delta,\ldots,x_N^\delta;x_{N+1}^\delta,x_{N+2}^\delta):=\{\gamma^\delta_i\text{ ends at }(x^\delta_{N+1}x^\delta_{N+2}),\text{ for }1\le i\le N\}$. Note that $A_\delta$ is equivalent to the event that for $1\le j\le N+1$, there exists a cluster with sign $(-1)^j$ connecting $(x^\delta_{j-1}x^\delta_j)$ and $(x^\delta_{N+1}x^\delta_{N+2})$. See Figure~\ref{fig::Ising} for an illustration.
\begin{figure}[ht!]
\begin{center}
\includegraphics[width=0.8\textwidth]{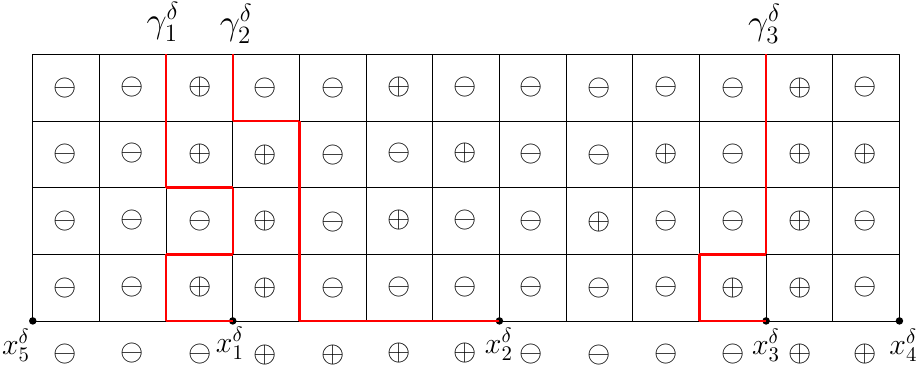}
\end{center}
\caption{\label{fig::Ising} This is an illustration of the setup of Ising model in this paper when $N=3$. Along $(x_5^\delta x_4^\delta)$, the boundary conditions are alternating boundary conditions; along $(x_4^\delta x_5^\delta)$, the boundary condition is free boundary condition. In this picture, the event $A_\delta$ occurs and the discrete curves in red are interfaces starting from $x_1^\delta,x_2^\delta$ and $x_3^\delta$.}
\end{figure}
We will derive the scaling limit of $\PP[A_\delta]$. Moreover,  we will prove that given $A_\delta$, the conditional law of $\{\gamma^\delta_1,\ldots,\gamma^\delta_N\}$ converges to the multiple $N$-$\SLE_\kappa\left(\frac{\kappa-6}{2},\frac{\kappa-6}{2}\right)$ with $\kappa=\frac{8}{3}$. See Theorem~\ref{thm::Ising_limit}.

Fix $\kappa\in (0,4)$. In Theorem~\ref{thm::multiple_odd} and Theorem~\ref{thm::multiple_even}, we will give two different constructions of multiple $N$-$\SLE_\kappa\left(\frac{\kappa-6}{2},\frac{\kappa-6}{2}\right)$. In Theorem~\ref{thm::uniqueness_multiple}, we will prove that multiple $N$-$\SLE_\kappa\left(\frac{\kappa-6}{2},\frac{\kappa-6}{2}\right)$ is characterised by its conditional law. In particular,  we can obtain the joint law of hitting points of multiple $N$-$\SLE_\kappa\left(\frac{\kappa-6}{2},\frac{\kappa-6}{2}\right)$ on $(x_{N+1}x_{N+2})$ with odd (resp. even) indices, see Corollary~\ref{coro::hitting_law}.

By the explicit form of the law of hitting points~\eqref{eqn::density_case1} and~\eqref{eqn::density_case2}, intuitively, by shrinking $x_1,\ldots,x_N$ to one point, the law of the hitting points with odd (resp. even) indices equals the $\beta$-Jacobi ensemble with the conjectured relation $\beta=\frac{8}{\kappa}$ from physics literature~\cite{CardyDyson}. But it is not obvious that how to prove the convergence of the law of multiple $N$-$\SLE_\kappa\left(\frac{\kappa-6}{2},\frac{\kappa-6}{2}\right)$ when shrinking the starting points to one point. Thus, we will construct another type of multiple $N$-$\SLE_\kappa\left(\frac{\kappa-6}{2},\frac{\kappa-6}{2}\right)$ such that all the curves start from the same point. Similarly to the constructions given in Theorem~\ref{thm::multiple_odd} and Theorem~\ref{thm::multiple_even},  we will give two constructions in Theorem~\ref{thm::same_point_odd} and Theorem~\ref{thm::same_point_even}. By establishing the uniqueness similar to Theorem~\ref{thm::uniqueness_multiple} (see Theorem~\ref{thm::same_point_uniqueness}), we obtain a direct connection between $\SLE$-type curves and $\beta$-Jacobi ensemble with $\beta=\frac{8}{\kappa}$, see Corollary~\ref{coro::same_point_hitting_law}.

\subsection{Multiple $N$-$\SLE_\kappa\left(\frac{\kappa-6}{2},\frac{\kappa-6}{2}\right)$}
\label{subsec::intro_multiple}
We denote by $X$ the set of unparameterised planar oriented curves, i.e. continuous mappings from $[0,1]$ to $\C$ modulo reparameterization. We equip $X$ with the metric 
\begin{equation}\label{eqn::curve_metric}
\dist(\eta, \tilde{\eta}):=\inf_{\varphi,\tilde{\varphi}}\sup_{t\in [0,1]}|\eta(\varphi(t))-\tilde{\eta}(\tilde{\varphi}(t))|,
\end{equation}
where the infimum is taken over all increasing homeomorphisms $\varphi, \tilde{\varphi}: [0,1]\to [0,1]$. Then, the metric space $(X, \dist)$ is complete and separable.

Fix $N\ge 1$ and fix a polygon $(\Omega;x_1,\ldots,x_N; x_{N+1},x_{N+2})$. Denote by $X_N(\Omega;x_1,\ldots,x_N; x_{N+1},x_{N+2})$ the set of curves     $(\gamma_1,\ldots,\gamma_N)$ in $X^N$ such that $\gamma_j$ is a simple curve which starts from $x_j$ and ends at $(x_{N+1},x_{N+2})$ for $1\le j\le N$, and $\gamma_i\cap\gamma_j=\emptyset$ for $1\le i<j\le N$. For  $\kappa\in(0,4)$, we call a probability measure on $(\gamma_1,\ldots,\gamma_N)\in X(\Omega;x_1,\ldots,x_N;x_{N+1},x_{N+2})$ a multiple $N$-$\SLE_\kappa\left(\frac{\kappa-6}{2},\frac{\kappa-6}{2}\right)$ if the conditional law of one curve given others equals $\SLE_\kappa\left(\frac{\kappa-6}{2},\frac{\kappa-6}{2}\right)$. More precisely,
for $1\le j\le N$, define $\tau_j$ to be the hitting time of $\gamma_j$ on $(x_{N+1}x_{N+2})$.  We use the convention $\gamma_0(\tau_0):=x_{N+2}$ and $\gamma_{N+1}(\tau_{N+1}):=x_{N+1}$. Denote by $\Omega_j$ the connected component of $\Omega\setminus\cup_{i\neq j}\gamma_i$ whose boundary contains $\gamma_{j-1}(\tau_{j-1})$ and $\gamma_{j+1}(\tau_{j+1})$. Given $\cup_{i\neq j}\gamma_i$, the conditional law of $\gamma_j$ equals the law of $\SLE_\kappa\left(\frac{\kappa-6}{2},\frac{\kappa-6}{2}\right)$ curve on $\Omega_j$ starting from $x_j$ with two force points $\gamma_{j-1}(\tau_{j-1})$ and $\gamma_{j+1}(\tau_{j+1})$. 

We will give two different constructions of multiple $N$-$\SLE_\kappa\left(\frac{\kappa-6}{2},\frac{\kappa-6}{2}\right)$ via flow lines of Gaussian free field~\cite{MillerSheffieldIG1} (see also in Section~\ref{sec::flow_line} for a brief introduction). Define $\lambda:=\frac{\pi}{\sqrt\kappa}$ and define $\chi:=\frac{2}{\sqrt\kappa}-\frac{\sqrt\kappa}{2}$. Denote by $M:=\left[\frac{N}{2}\right]$ and by $\bold x:=(x_1,\ldots,x_N)$. 
\begin{theorem}\label{thm::multiple_odd}
Fix $N\ge 1$ and $\kappa\in (0,4)$, and fix $x_1<\cdots<x_N<x_{N+1}$. We construct a probability measure on $(\ell_1,\ldots,\ell_N)\in X_N(\HH;x_1,\ldots,x_{N};x_{N+1},\infty)$ as follows.
\begin{itemize}
\item
Sample $\bold z:=(z_1,\ldots,z_M)$ with density (with respect to the product of Lebesgue measure)
\begin{align}\label{eqn::density_case1}
&r(N; \bold x,x_{N+1};\bold z)\notag\\
=&\frac{1}{\mathcal{Z}_N}\one_{\{x_{N+1}<z_1<\cdots<z_M\}}\prod_{\substack{1\le i\le N\\1\le j\le M}}(z_j-x_i)^{-\frac{4}{\kappa}}\prod_{1\le j\le M}(z_j-x_{N+1})^{\frac{6-\kappa}{\kappa}}\prod_{1\le i<j\le M}(z_j-z_i)^{\frac{8}{\kappa}},
\end{align}
where the normalisation $\mathcal{Z}_N=\mathcal{Z}_N(\bold x; x_{N+1})$ is defined by 
\begin{align*}
&\mathcal{Z}_N\\
:=&\int_{\{x_{N+1}<z_1<\cdots<z_M\}}\prod_{\substack{1\le i\le N\\1\le j\le M}}(z_j-x_i)^{-\frac{4}{\kappa}}\prod_{1\le j\le M}(z_j-x_{N+1})^{\frac{6-\kappa}{\kappa}}\prod_{1\le i<j\le M}(z_j-z_i)^{\frac{8}{\kappa}}\prod_{1\le j\le M}\ud z_j.
\end{align*}
\item
Given $(z_1,\ldots,z_M)$, suppose $\Gamma$ is the Gaussian free field with boundary data
\begin{align*}
&\lambda-2N\lambda\text{ on }(-\infty,x_1),\quad\lambda\left(1-2N+2(j-1)\right)\text{ on }(x_{j-1},x_j)\text{ for }2\le j\le N,\\
& \lambda\text{ on }(x_N,x_{N+1}),\quad \lambda\left(1+\frac{\kappa-6}{2}\right)\text{ on }(x_{N+1},z_1),\\
&\lambda\left(1+\frac{\kappa-6}{2}-4(j-1)\right)\text{ on }(z_{j-1},z_j)\text{ for }2\le j\le M,\,
\lambda\left(1+\frac{\kappa-6}{2}-4M\right)\text{ on }(z_{M},+\infty).
\end{align*}
For $1\le j\le N$, suppose $\ell_j$ is the flow line of $\Gamma$ starting from $x_j$ with angle $\frac{2\lambda}{\chi}(j-1)$. 
\end{itemize}
Then, the law of random curves $(\ell_1,\ldots,\ell_N)$ is a multiple $N$-$\SLE_\kappa\left(\frac{\kappa-6}{2},\frac{\kappa-6}{2}\right)$. See Figure~\ref{fig::ell} for an illustration.
\end{theorem}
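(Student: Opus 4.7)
My plan is to verify the defining conditional-law property of the multiple $N$-$\SLE_\kappa\left(\frac{\kappa-6}{2},\frac{\kappa-6}{2}\right)$ directly from the imaginary geometry coupling of~\cite{MillerSheffieldIG1}, with the density $r$ playing the role of exactly the compensator needed to make a Coulomb-gas integration work out.

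\emph{Setup and marginal reading.} The boundary data of $\Gamma$ has upward jumps $+2\lambda$ at each $x_j$ ($1\le j\le N$), a jump $\lambda\cdot\frac{\kappa-6}{2}$ at $x_{N+1}$, and downward jumps $-4\lambda$ at each $z_k$ ($1\le k\le M$). Subtracting $\chi\theta_j=2\lambda(j-1)$ from the boundary data and applying the IG1 boundary hitting/avoidance rules, one checks that for a.e.\ $\bold z$ each $\ell_j$ is a simple curve that avoids the boundary arcs other than $(x_{N+1},+\infty)$ and terminates on $(x_{N+1},+\infty)$; the monotonicity of flow lines in angle then yields pairwise disjointness. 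The same coupling identifies the marginal law of $\ell_j$ given $\bold z$ as a chordal $\SLE_\kappa(\underline\rho)$ in $\HH$ started at $x_j$, with force points at the $x_i$ ($i\neq j$) of weight $2$, at $x_{N+1}$ of weight $\frac{\kappa-6}{2}$, and at each $z_k$ of weight $-4$.

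\emph{Conditional-law step.} The crux is to compute the conditional law of $\ell_j$ given $(\ell_i)_{i\neq j}$. By the imaginary-geometry restriction property, conditional on $((\ell_i)_{i\neq j},\bold z)$ the field $\Gamma$ restricted to $\Omega_j$ is a GFF whose boundary data has jumps only at $x_j$ (of size $2\lambda$), at $\gamma_{j-1}(\tau_{j-1})$ and $\gamma_{j+1}(\tau_{j+1})$ (each of size $\lambda\cdot\frac{\kappa-6}{2}$), and at those $z_k$ lying inside $\Omega_j$ (each of size $-4\lambda$). Hence $\ell_j$ conditional on $((\ell_i)_{i\neq j},\bold z)$ is an $\SLE_\kappa$ in $\Omega_j$ starting at $x_j$ with force points at $\gamma_{j\pm 1}(\tau_{j\pm 1})$ of weight $\frac{\kappa-6}{2}$ and at each $z_k\in\Omega_j$ of weight $-4$. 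Writing the conditional density of $\bold z$ given $(\ell_i)_{i\neq j}$ via Bayes as $r(N;\bold x,x_{N+1};\bold z)$ times a ratio of $\SLE$ partition functions, and integrating out $\bold z$, the task reduces to a single Coulomb-gas/Selberg identity.

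\emph{Main obstacle.} The identity one needs is
\[
\int_{\{x_{N+1}<z_1<\cdots<z_M\}} r(N;\bold x,x_{N+1};\bold z)\,\mathcal{Z}_{\SLE_\kappa(\underline\rho')}(\ell_j;\bold z)\,\prod_{k=1}^M\ud z_k \;=\; C\big((\ell_i)_{i\neq j}\big)\cdot \mathcal{Z}_{\SLE_\kappa\!\left(\frac{\kappa-6}{2},\frac{\kappa-6}{2}\right)}(\ell_j\text{ in }\Omega_j),
\]
with $C$ independent of $\ell_j$. The exponents $-\tfrac{4}{\kappa}$, $\tfrac{6-\kappa}{\kappa}$, $\tfrac{8}{\kappa}$ in $r$ are precisely the values dictated by Coulomb-gas arithmetic (the $z_k$'s acting as screening charges of weight $-4$, $x_{N+1}$ carrying boundary charge $\tfrac{\kappa-6}{2}$), so the identity is structurally plausible but not immediate. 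I would verify it first for $N=1$ ($M=0$, tautological) and $N=2$ ($M=1$, where it reduces to the Euler integral representation of $\hF$), and then either proceed by induction on $N$ using the consistency of the flow-line coupling across different $j$'s, or more robustly cross-check with the alternative construction in Theorem~\ref{thm::multiple_even} and invoke the uniqueness Theorem~\ref{thm::uniqueness_multiple} to bypass an explicit iterated screening computation.
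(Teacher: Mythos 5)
Your setup and the easy half of the argument are right: the flow-line coupling does give, for the curves $\ell_{N-2j}$ whose index has the same parity as $N$, that the conditional law given the others is exactly $\SLE_\kappa\left(\frac{\kappa-6}{2},\frac{\kappa-6}{2}\right)$ with force points at the neighbours' endpoints $z_j,z_{j+1}$ --- no integration over $\bold z$ is needed there, since those $z$'s are already determined as the endpoints of the conditioning curves. The genuine difficulty, which you correctly isolate but do not resolve, concerns the opposite-parity curves $\ell_{N-(2j+1)}$: their own endpoint is $z_{j+1}$, which is \emph{not} determined by the other curves, so the conditional law is a mixture over the conditional density of $z_{j+1}$ of flow lines targeted at $z_{j+1}$, and showing this mixture is $\SLE_\kappa\left(\frac{\kappa-6}{2},\frac{\kappa-6}{2}\right)$ is precisely the content of the theorem. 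Your displayed "Coulomb-gas/Selberg identity" is this missing step, and it is not proved. Worse, your fallback --- "cross-check with the alternative construction in Theorem~\ref{thm::multiple_even} and invoke the uniqueness Theorem~\ref{thm::uniqueness_multiple}" --- is circular: Theorem~\ref{thm::uniqueness_multiple} asserts uniqueness among measures \emph{already known} to satisfy the multiple-$\SLE$ conditional-law property, so it cannot be used to establish that property for either construction.

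The paper closes this gap differently, and non-circularly: it proves directly that the two constructions $\PP_N$ (Theorem~\ref{thm::multiple_odd}) and $\QQ_N$ (Theorem~\ref{thm::multiple_even}) give the \emph{same} law, via cascade relations (Lemma~\ref{lem::cascade_eta} and Lemma~\ref{lem::cascade_gamma}) and induction on $N$ (Lemma~\ref{lem::curve_properties}). The cascade relations identify the marginal law of the last curve under each construction as the same base $\SLE_\kappa\left(2,\ldots,2;\frac{\kappa-6}{2}\right)$ weighted by explicit ratios of the partition functions $\mathcal Z$ and $\mathcal W$, and identify the conditional law of the remaining curves as $\QQ_{N-1}$, resp.\ $\PP_{N-1}$, in the complementary domain; the induction hypothesis then forces the two weights to agree and the two conditional laws to agree. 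Once $\PP_N=\QQ_N$ is known, each parity class of curves inherits its required conditional law from whichever of the two flow-line descriptions makes it manifest, and no screening integral over $\bold z$ ever has to be evaluated against an $\SLE$ partition function. The integral estimates that do appear (bounding the martingale $U$ and computing its terminal value) are Beta/Selberg-type bounds of the kind you anticipate, but they enter only to justify optional stopping and dominated convergence in the cascade, not as the identity you posited. To repair your proposal you would need to either prove your identity outright or replace the appeal to uniqueness with the paper's cascade/induction argument.
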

\begin{figure}[ht!]
\begin{center}
\includegraphics[width=0.8\textwidth]{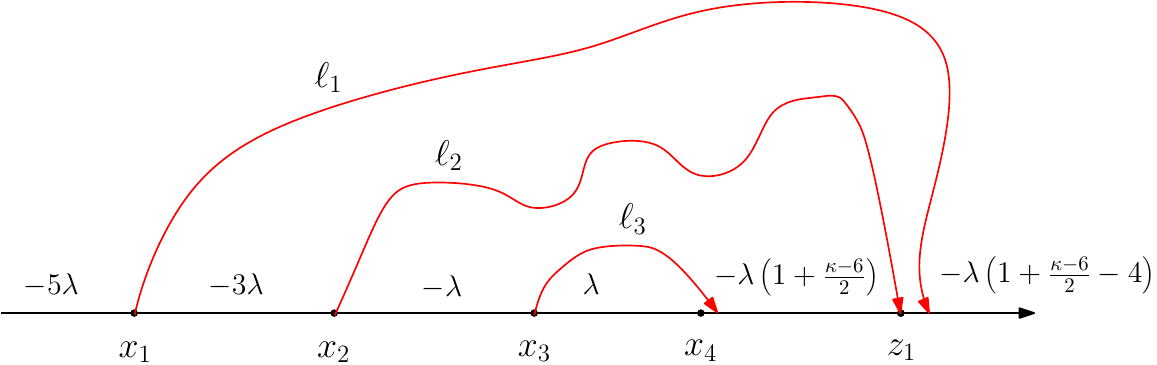}
\end{center}
\caption{\label{fig::ell} This is an illustration of the Theorem~\ref{thm::multiple_odd} when $N=3$. Note that $\ell_2$ hits $(x_4,+\infty)$ at $z_1$.}
\end{figure}
\begin{theorem}\label{thm::multiple_even}
Fix $N\ge 1$ and $\kappa\in (0,4)$, and fix $x_1<\cdots<x_N<x_{N+1}$. We construct a probability measure on $(\eta_1,\ldots,\eta_N)\in X_N(\HH;x_1,\ldots,x_{N};x_{N+1},\infty)$ as follows.
\begin{itemize}
\item
Sample $\bold w:=(w_1,\ldots,w_{N-M})$ with density (with respect to the Lebesgue measure)
\begin{align}\label{eqn::density_case2}
&\rho(N;\bold x, x_{N+1};\bold w)\notag\\
=&\frac{1}{\mathcal{W}_N}\one_{\{x_{N+1}<w_1<\cdots<w_{N-M}\}}\prod_{\substack{1\le i\le N\\1\le j\le N-M}}(w_j-x_i)^{-\frac{4}{\kappa}}\prod_{1\le j\le N-M}(w_j-x_{N+1})^{\frac{2-\kappa}{\kappa}}\prod_{1\le i<j\le N-M}(w_j-w_i)^{\frac{8}{\kappa}},
\end{align}
where the normalisation $\mathcal{W}_N=\mathcal{W}_N(\bold x; x_{N+1})$ is defined by
\begin{align*}
&\mathcal{W}_N\\
:=&\int_{\{x_{N+1}<w_1<\cdots<w_{N-M}\}}\prod_{\substack{1\le i\le N\\1\le j\le N-M}}(w_j-x_i)^{-\frac{4}{\kappa}}\prod_{1\le j\le N-M}(w_j-x_{N+1})^{\frac{2-\kappa}{\kappa}}\prod_{1\le i<j\le N-M}(w_j-w_i)^{\frac{8}{\kappa}}\prod_{1\le j\le N-M}\ud w_j.
\end{align*}
\item
Given $(w_1,\ldots,w_{N-M})$, suppose $\hat\Gamma$ is the Gaussian free field with boundary data
\begin{align*}
&\lambda-2N\lambda\text{ on }(-\infty,x_1),\quad\lambda\left(1-2N+2(j-1)\right)\text{ on }(x_{j-1},x_j)\text{ for }2\le j\le N,\\
& \lambda\text{ on }(x_N,x_{N+1}),\quad \lambda\left(1+\frac{\kappa-2}{2}\right)\text{ on }(x_{N+1},w_1),\\
&\lambda\left(1+\frac{\kappa-2}{2}-4(j-1)\right)\text{ on }(w_{j-1},w_j)\text{ for }2\le j\le N-M,\,
\lambda\left(1+\frac{\kappa-2}{2}-4(N-M)\right)\text{ on }(w_{N-M},+\infty).
\end{align*}
For $1\le j\le N$, suppose $\eta_j$ is the flow line of $\hat\Gamma$ starting from $x_j$ with angle $\frac{2\lambda}{\chi}(j-1)$. 
\end{itemize}
Then, the law of random curves $(\eta_1,\ldots,\eta_N)$ is a multiple $N$-$\SLE_\kappa\left(\frac{\kappa-6}{2},\frac{\kappa-6}{2}\right)$. See Figure~\ref{fig::eta} for an illustration.
\end{theorem}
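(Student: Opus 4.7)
The plan is to follow the same three-step strategy as in the proof of Theorem~\ref{thm::multiple_odd}, with the hitting points $\bold w$ in place of $\bold z$ and with the parity of the targeted flow lines swapped. First, given $\bold w$, I would apply the Miller--Sheffield imaginary geometry coupling to $\hat\Gamma$ to identify the geometry of $(\eta_1,\ldots,\eta_N)$. The chosen boundary data together with the angles $(j-1)\tfrac{2\lambda}{\chi}$ force each $\eta_j$ to be a simple curve for $\kappa\in(0,4)$, and the flow-line monotonicity rule orders them from left to right. The crucial point in the even case is that the boundary data $\lambda(1+\frac{\kappa-2}{2})$ on $(x_{N+1},w_1)$ is exactly the value for which a flow line of the appropriate angle can target $w_1$, so by the continuation/merging rules of imaginary geometry one verifies that the $N-M$ points $w_1,\ldots,w_{N-M}$ are indeed visited by the flow lines with the parity opposite to the one in Theorem~\ref{thm::multiple_odd}.

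Second, I would verify the multiple $N$-$\SLE_\kappa(\frac{\kappa-6}{2},\frac{\kappa-6}{2})$ property. Fix $1\le j\le N$ and condition on $(\eta_i)_{i\neq j}$ together with $\bold w$. By the flow-line resampling/conformal Markov property, the conditional law of $\eta_j$ in the connected component $\Omega_j$ of $\Omega\setminus\bigcup_{i\neq j}\eta_i$ is an $\SLE_\kappa(\underline\rho)$ process whose force weights can be read off from the jumps of the boundary data of $\hat\Gamma$ across the neighbouring flow lines, after subtracting $\chi\arg(\cdot)$. A direct computation shows that the only non-cancelling jumps arise at the two points $\eta_{j-1}(\tau_{j-1})$ and $\eta_{j+1}(\tau_{j+1})$, each contributing a force weight of $\frac{\kappa-6}{2}$; the jumps at the interior points $w_k$ are cancelled by the density $\rho$ after integrating $\bold w$ out.

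Third, the verification that $\rho$ produces the correct marginal law reduces to a partition-function identity: $\rho$ is, up to normalisation, the ratio of two GFF-type partition functions, one carrying the force points at $\bold w$ and one without. This identity is a Selberg-type computation fully analogous to the one for $\mathcal Z_N$ in Theorem~\ref{thm::multiple_odd}, with the exponent $\frac{6-\kappa}{\kappa}$ replaced by $\frac{2-\kappa}{\kappa}$, reflecting the different jump of the boundary data at $x_{N+1}$ in the even case.

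The main obstacle, as in the odd case, is controlling the topological configuration of the flow lines almost surely: one must verify that the appropriate subset of flow lines indeed targets $w_1,\ldots,w_{N-M}$ in order, that no unwanted mergings occur, and that the pairing of the remaining flow lines among themselves matches the prescribed boundary data. This is careful but mechanical bookkeeping of boundary values around each $x_i$ and $w_k$ using Miller--Sheffield continuation and monotonicity; once it is in place, the rest of the argument parallels Theorem~\ref{thm::multiple_odd} step-by-step.
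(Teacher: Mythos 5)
Your step 2 contains the essential gap. Within the single coupling of Theorem~\ref{thm::multiple_even}, the resampling property of flow lines gives the desired conditional law only for \emph{half} of the curves: as recorded in Section~\ref{sec::flow_line}, for $j\ge 0$ with $2j+1\le N$ the curve $\eta_{N-(2j+1)}$ has conditional law $\SLE_\kappa\left(\frac{\kappa-6}{2},\frac{\kappa-6}{2}\right)$ given the others, but the curves $\eta_{N-2j}$ \emph{terminate at the marked points} $w_{j+1}$. For those curves the conditional law given $\cup_{i\neq N-2j}\eta_i$ is a curve targeted at a specific (random) boundary point, and it is not obtained by reading off boundary-data jumps; nor is there any mechanism by which "integrating $\bold w$ out" cancels the force points there --- the points $w_{j+1}$ are endpoints of the very curves being resampled, so they are not even measurable with respect to the conditioning $\sigma$-algebra in a way that lets you average them away inside a fixed GFF instance. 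This is precisely why the paper needs \emph{both} constructions.

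The paper's actual argument is a two-coupling cascade-and-induction scheme that your proposal does not contain. Lemma~\ref{lem::cascade_eta} and Lemma~\ref{lem::cascade_gamma} show that under $\PP_N$ (resp.\ $\QQ_N$) the marginal law of the last curve is an $\SLE_\kappa\left(2,\ldots,2;\frac{\kappa-6}{2}\right)$ weighted by an explicit ratio of partition functions, and that conditionally on it the remaining $N-1$ curves are distributed as the \emph{opposite} ensemble $\QQ_{N-1}$ (resp.\ $\PP_{N-1}$). Lemma~\ref{lem::curve_properties} then proves by induction both the partition-function identity~\eqref{eqn::partition_func} (what you call the "Selberg-type computation" is in fact obtained this way, not directly) and the identity of laws $\PP_N=\QQ_N$. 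Only with $\PP_N=\QQ_N$ in hand can one verify the multiple-$\SLE$ property: for indices where the $\QQ_N$ picture fails one switches to the $\PP_N$ picture, where the corresponding flow line $\ell_{N-2j}$ does have conditional law $\SLE_\kappa\left(\frac{\kappa-6}{2},\frac{\kappa-6}{2}\right)$ with force points $z_j,z_{j+1}$. The analytic content of the cascade lemmas (martingale bounds such as~\eqref{eqn::bound_U_1} and~\eqref{eqn::bound_V_1}, the $\eps\to 0$ limits of the stopped weights, and the identification of the terminal values) is the real work of the proof and is absent from your outline.
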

\begin{figure}[ht!]
\begin{center}
\includegraphics[width=0.8\textwidth]{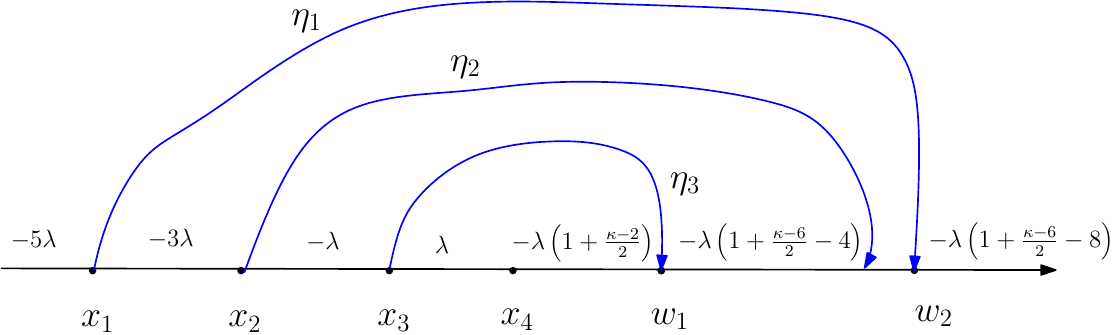}
\end{center}
\caption{\label{fig::eta} This is an illustration of the Theorem~\ref{thm::multiple_even} when $N=3$. Note that $\eta_1$ hits $(x_4,+\infty)$ at $w_1$ and $\eta_3$ hits $(x_4,+\infty)$ at $w_2$.}
\end{figure}
Similarly to the case of multiple $\SLE_\kappa$ (for instance, see~\cite{BeffaraPeltolaWuUniqueness,ZhanExistenceUniquenessMultipleSLE}), multiple $\SLE_\kappa\left(\frac{\kappa-6}{2},\frac{\kappa-6}{2}\right)$ is also characterised by its conditional law. 
\begin{theorem}\label{thm::uniqueness_multiple}
Fix $N\ge 1$ and $\kappa\in(0,4)$. Fix a  polygon $(\Omega;x_1,\ldots,x_N;x_{N+1},x_{N+2})$. The law of multiple $N$-$\SLE_\kappa\left(\frac{\kappa-6}{2},\frac{\kappa-6}{2}\right)$ on $X_N(\Omega;x_1,\ldots,x_N;x_{N+1},x_{N+2})$ is unique.
\end{theorem}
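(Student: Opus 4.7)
The plan is to adapt the resampling--coupling argument used for uniqueness of multiple $\SLE_\kappa$ in~\cite{BeffaraPeltolaWuUniqueness,ZhanExistenceUniquenessMultipleSLE} to the present two-force-point setting with random right endpoints on the free arc $(x_{N+1}x_{N+2})$. Writing $\rho:=\frac{\kappa-6}{2}$, I would first define a resampling Markov chain on $X_N(\Omega;x_1,\ldots,x_N;x_{N+1},x_{N+2})$: at each step an index $j\in\{1,\ldots,N\}$ is selected (say cyclically), and the curve $\gamma_j$ is replaced by an independent sample from the conditional $\SLE_\kappa(\rho,\rho)$ law in $\Omega_j$ with force points $\gamma_{j-1}(\tau_{j-1})$ and $\gamma_{j+1}(\tau_{j+1})$. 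By the very definition of multiple $N$-$\SLE_\kappa(\rho,\rho)$, every such measure is stationary for this chain, so it suffices to prove that the chain has a unique stationary distribution.

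For this, I would couple two chains started from arbitrary multiple $N$-$\SLE_\kappa(\rho,\rho)$ measures $\mu$ and $\tilde\mu$: conditionally on the two current configurations, resample $\gamma_j$ and $\tilde\gamma_j$ using a single Brownian motion that drives both $\SLE_\kappa(\rho,\rho)$ resamples after conformally mapping the domains $\Omega_j,\tilde\Omega_j$ to $\HH$. The resulting coupled chain should be contracting: using continuity of the $\SLE_\kappa(\rho,\rho)$ law in the domain, the marked point and the force points (under Carath\'eodory convergence from one side), a single sweep brings the two configurations closer in the metric~\eqref{eqn::curve_metric}, and iterating yields convergence of the coupled configurations to a common limit, which forces $\mu=\tilde\mu$. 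Since all the force point data and the endpoints of the neighbouring curves on $(x_{N+1}x_{N+2})$ depend continuously on the other curves, the iteration reduces, as in \cite{BeffaraPeltolaWuUniqueness}, to proving that the Markov kernel of one sweep is a strict contraction on the Wasserstein-type distance induced by \eqref{eqn::curve_metric}.

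The main obstacle will be controlling the coupling near the free arc. The relevant hitting points are random and can either merge with each other or approach $x_{N+1},x_{N+2}$; moreover, for $\kappa\le 2$ we have $\rho\le -2$, so the $\SLE_\kappa(\rho,\rho)$ curve does not interact with its force points continuously in the classical sense, and the usual driving-function stability arguments require care. I would handle these issues by leaning on the Gaussian free field flow-line representation from Theorems~\ref{thm::multiple_odd} and~\ref{thm::multiple_even}: flow-line theory supplies the necessary continuity of the $\SLE_\kappa(\rho,\rho)$ law in its data, and also yields a priori estimates on how closely the free-arc endpoints can approach the marked points $x_{N+1},x_{N+2}$. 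Combined with a Radon--Nikodym comparison against ordinary $\SLE_\kappa(\rho,\rho)$ (whose densities are expressible through the partition functions $\mathcal Z_N,\mathcal W_N$ of~\eqref{eqn::density_case1}--\eqref{eqn::density_case2}), this should provide the uniform estimates needed to close the contraction argument and conclude $\mu=\tilde\mu$.
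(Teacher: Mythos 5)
Your first step --- setting up the Gibbs/resampling Markov chain for which every multiple $N$-$\SLE_\kappa\left(\frac{\kappa-6}{2},\frac{\kappa-6}{2}\right)$ is stationary --- agrees with the paper. The gap is in how you propose to prove uniqueness of the stationary measure. You rely on the contraction-coupling scheme of~\cite{BeffaraPeltolaWuUniqueness}: drive the two resampled curves by a common Brownian motion and show that one sweep is a strict contraction in a Wasserstein distance built from~\eqref{eqn::curve_metric}. The paper explicitly points out that the key uniform estimate underlying that scheme (\cite[Lemma 3.6]{BeffaraPeltolaWuUniqueness}) fails in this setting, precisely because of the features you yourself list as ``obstacles'': the right endpoints of the curves on the free arc are random, can collide with each other or with $x_{N+1},x_{N+2}$, and the force-point weight $\frac{\kappa-6}{2}\le -1$ (indeed $\le -2$ for $\kappa\le 2$) destroys the uniform Radon--Nikodym and continuity bounds that make the contraction quantitative. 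Your proposed remedy --- ``flow-line theory supplies the necessary continuity'' plus ``a Radon--Nikodym comparison \ldots should provide the uniform estimates'' --- is exactly the missing content; no mechanism is given for obtaining estimates that are uniform over all admissible configurations of the neighbouring curves, and without uniformity the contraction argument does not close.

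The paper instead follows~\cite[Theorem 4.2]{ZhanExistenceUniquenessMultipleSLE}, which requires much less: one fixes $N$ disjoint tubes $T_1,\ldots,T_N$ joining neighbourhoods of the $x_j$ to the free arc, takes as reference measure the product $\phi=\bigotimes_j\mu_j$ of $\SLE_\kappa\left(\frac{\kappa-6}{2},\frac{\kappa-6}{2}\right)$ laws in these tubes, and shows that from \emph{any} initial configuration the chain reaches any set $A$ with $\phi[A]>0$ with positive probability in $3N$ steps. The only analytic input is a qualitative absolute-continuity statement (Lemma~\ref{lem::diff_domain}): the $\SLE_\kappa\left(\frac{\kappa-6}{2},\frac{\kappa-6}{2}\right)$ in a tube is absolutely continuous with respect to the one in any larger domain, restricted to the tube --- which follows softly from the flow-line coupling, with no uniformity needed. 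This accessibility property forces the stationary measure to be unique. If you want to salvage your write-up, you should replace the contraction step by this positive-probability/irreducibility argument (or else actually prove the uniform estimates you invoke, which is a substantial open issue rather than a routine verification).
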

By Theorem~\ref{thm::uniqueness_multiple}, we can obtain the information of hitting points from both Theorem~\ref{thm::multiple_odd} and Theorem~\ref{thm::multiple_even}.
\begin{corollary}\label{coro::hitting_law}
Fix $N\ge 1$ and $\kappa\in(0,4)$. Sample multiple $N$-$\SLE_\kappa\left(\frac{\kappa-6}{2},\frac{\kappa-6}{2}\right)$ on $(\HH;x_1,\ldots,x_{N};x_{N+1},\infty)$, which we denote by $(\gamma_1,\ldots,\gamma_N)$. Recall that we denote by $\tau_j$ the hitting time of $(x_{N+1},+\infty)$ for $1\le j\le N$. Then, the law of hitting points with odd indices $(\gamma_{N-1}(\tau_{N-1}),\gamma_{N-3}(\tau_{N-3}),\ldots,\gamma_{N-2M+1}(\tau_{N-2M+1}))$ equals the law of $(z_1,\ldots, z_M)$ given in Theorem~\ref{thm::multiple_odd}. Similarly, the law of hitting points with even indices $(\gamma_N(\tau_N),\gamma_{N-2}(\tau_{N-2}),\ldots,\gamma_{N-(2N-2M-2)}(\tau_{N-(2N-2M-2)}))$ equals the law of $(w_1,\ldots, w_{N-M})$ given in Theorem~\ref{thm::multiple_even}.
\end{corollary}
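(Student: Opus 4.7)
The plan is to combine the uniqueness statement of Theorem~\ref{thm::uniqueness_multiple} with the two distinct flow line constructions of Theorems~\ref{thm::multiple_odd} and~\ref{thm::multiple_even}. Both constructions produce a multiple $N$-$\SLE_\kappa\left(\frac{\kappa-6}{2},\frac{\kappa-6}{2}\right)$ on $X_N(\HH;x_1,\ldots,x_N;x_{N+1},\infty)$, so by uniqueness they must induce the same distribution on $(\gamma_1,\ldots,\gamma_N)$. The joint law of any functional of the curves can therefore be extracted from whichever of the two representations makes that functional easiest to read off. The two theorems thus provide complementary windows on the same random object: one exposes the odd-indexed hitting points explicitly, the other the even-indexed ones.

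For the odd-indexed part I would work inside the construction of Theorem~\ref{thm::multiple_odd}. A priori the sampled points $z_1<\cdots<z_M$ are merely jump points of the boundary data of $\Gamma$, so the essential step is to identify them almost surely with the hitting points $(\ell_{N-1}(\tau_{N-1}),\ldots,\ell_{N-2M+1}(\tau_{N-2M+1}))$. This identification relies on the flow line interaction rules of Miller-Sheffield: each jump of size $-4\lambda$ at a $z_j$ is precisely the jump that a flow line of the matching angle $\frac{2\lambda}{\chi}(k-1)$ creates along its trace, so by the boundary-hitting analysis for flow lines the appropriate $\ell_k$ must terminate at exactly that $z_j$. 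The non-crossing topology of the simple curves $\ell_1,\ldots,\ell_N$ then dictates which $\ell_k$ lands at which $z_j$ and in which left-to-right order. Once this is in place, the density $r(N;\bs{x},x_{N+1};\bs{z})$ prescribed in~\eqref{eqn::density_case1} is, by construction, the joint law of the odd-indexed hitting points in the Theorem~\ref{thm::multiple_odd} representation; by uniqueness this law transfers to any multiple $N$-$\SLE_\kappa\left(\frac{\kappa-6}{2},\frac{\kappa-6}{2}\right)$.

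The even-indexed claim is treated symmetrically using Theorem~\ref{thm::multiple_even}: the same flow line argument identifies $(w_1,\ldots,w_{N-M})$ with the hitting points $(\eta_N(\tau_N),\eta_{N-2}(\tau_{N-2}),\ldots)$ on $(x_{N+1},+\infty)$, and the density $\rho(N;\bs{x},x_{N+1};\bs{w})$ in~\eqref{eqn::density_case2} then gives their joint law. The main obstacle is the flow line interaction step, i.e.\ identifying the marked boundary points with hitting points of specific $\ell_k$ (respectively $\eta_k$); the order on the real line in both the starting and ending configurations plus the non-crossing property must be combined with the Miller-Sheffield boundary-interaction rules to pin down the matching. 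This identification is naturally part of the verification of the construction theorems themselves, so once Theorems~\ref{thm::multiple_odd}, \ref{thm::multiple_even} and~\ref{thm::uniqueness_multiple} are established the corollary reduces to bookkeeping.
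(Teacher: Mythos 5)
Your proposal is correct and follows essentially the same route as the paper: the paper's proof invokes Theorem~\ref{thm::uniqueness_multiple} to identify the law of $(\gamma_1,\ldots,\gamma_N)$ with both flow-line constructions, and then reads off the hitting points from the properties recorded in Section~\ref{sec::flow_line} (where the identification $\ell_{N-(2j+1)}(\tau_{N-(2j+1)})=z_{j+1}$ and $\eta_{N-2j}(\tau_{N-2j})=w_{j+1}$ is established via the argument of Dub\'edat's duality lemma and the Miller--Sheffield interaction rules, exactly the step you flag as the essential one).
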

It is clear that when shrinking $x_1,\ldots,x_N$ to one point, the law of $(w_1,\ldots,w_{N-M})$ (resp. $(z_1,\ldots, z_M)$) converges to Jacobi$\left(N-M;\frac{8}{\kappa},\frac{4M-2N+3}{2},\frac{1}{2}\right)$ (resp. Jacobi$\left(M;\frac{8}{\kappa},\frac{2N-4M+1}{2},\frac{3}{2}\right)$) (see Section~\ref{sec::Jacobi}).  But it is not obvious that how to prove the convergence of curves: additional singularity at the starting point appears when shrinking $x_1,\ldots,x_N$ to one point. Next, we will consider another type of multiple $\SLE_\kappa\left(\frac{\kappa-6}{2},\frac{\kappa-6}{2}\right)$ and we will prove that  the hitting points with odd (resp. even) indices forms Jacobi$\left(N-M;\frac{8}{\kappa},\frac{4M-2N+3}{2},\frac{1}{2}\right)$ (resp. Jacobi$\left(M;\frac{8}{\kappa},\frac{2N-4M+1}{2},\frac{3}{2}\right)$).

Fix $N\ge 1$ and fix a  polygon $(\Omega;x;u,v)$. Define $X_N(\Omega;x;u,v)$ to be the set of curves   $(\gamma_1,\ldots,\gamma_N)$ in $X^N$ such that $\gamma_j$ is a simple curve which starts from $x$ and ends at $(uv)$ for $1\le j\le N$, and $\gamma_i\cap\gamma_j=\{x\}$ for $1\le i<j\le N$. For  $\kappa\in(0,4)$, we call a probability measure on $(\gamma_1,\ldots,\gamma_N)\in X(\Omega;x;u,v)$ is a multiple $N$-$\SLE_\kappa\left(\frac{\kappa-6}{2},\frac{\kappa-6}{2}\right)$ starting from $x$ if the conditional law of one curve given others equals $\SLE_\kappa\left(\frac{\kappa-6}{2},\frac{\kappa-6}{2}\right)$. More precisely,
for $1\le j\le N$, define $\tau_j$ to be the hitting time of $\gamma_j$ on $(uv)$.  We use the convention $\gamma_0(\tau_0):=u$ and $\gamma_{N+1}(\tau_{N+1}):=v$. Denote by $\Omega_j$ the connected component of $\Omega\setminus\cup_{i\neq j}\gamma_i$ whose boundary contains $\gamma_{j-1}(\tau_{j-1})$ and $\gamma_{j+1}(\tau_{j+1})$. Given $\cup_{i\neq j}\gamma_i$, the conditional law of $\gamma_j$ equals the law of $\SLE_\kappa\left(\frac{\kappa-6}{2},\frac{\kappa-6}{2}\right)$ on $\Omega_j$ curve starting from $x$ with two force points $\gamma_{j-1}(\tau_{j-1})$ and $\gamma_{j+1}(\tau_{j+1})$. 

Similarly to Theorem~\ref{thm::multiple_odd} and Theorem~\ref{thm::multiple_even}, we will give two different constructions of multiple $N$-$\SLE_\kappa\left(\frac{\kappa-6}{2},\frac{\kappa-6}{2}\right)$ starting from $x$. 
\begin{theorem}\label{thm::same_point_odd}
Fix $N\ge 1$ and $\kappa\in (0,4)$, and fix $x<u$. We construct a probability measure on $(\ell_1,\ldots,\ell_N)\in X_N(\HH;x;u,\infty)$ as follows.
\begin{itemize}
\item
Sample $\bold z:=(z_1,\ldots,z_M)$ with density (with respect to the Lebesgue measure)
\begin{align}\label{eqn::density_case1}
&r(N; x,u;\bold z)
=\frac{1}{\mathcal{Z}_N}\one_{\{u<z_1<\cdots<z_M\}}\prod_{1\le j\le M}(z_j-x)^{-\frac{4N}{\kappa}}\prod_{1\le j\le M}(z_j-u)^{\frac{6-\kappa}{\kappa}}\prod_{1\le i<j\le M}(z_j-z_i)^{\frac{8}{\kappa}},
\end{align}
where the normalisation $\mathcal{Z}_N=\mathcal{Z}_N(x; u)$ is defined by 
\begin{align*}
&\mathcal{Z}_N
:=\int_{\{u<z_1<\cdots<z_M\}}\prod_{1\le j\le M}(z_j-x)^{-\frac{4N}{\kappa}}\prod_{1\le j\le M}(z_j-u)^{\frac{6-\kappa}{\kappa}}\prod_{1\le i<j\le M}(z_j-z_i)^{\frac{8}{\kappa}}\prod_{1\le j\le M}\ud z_j.
\end{align*}
\item
Given $(z_1,\ldots,z_M)$, suppose $\Gamma$ is the Gaussian free field with boundary data
\begin{align*}
&\lambda-2N\lambda\text{ on }(-\infty,x_1),\quad \lambda\text{ on }(x,u),\quad \lambda\left(1+\frac{\kappa-6}{2}\right)\text{ on }(u,z_1),\\
&\lambda\left(1+\frac{\kappa-6}{2}-4(j-1)\right)\text{ on }(z_{j-1},z_j)\text{ for }2\le j\le M,\,
\lambda\left(1+\frac{\kappa-6}{2}-4M\right)\text{ on }(z_{M},+\infty).
\end{align*}
For $1\le j\le N$, suppose $\ell_j$ is the flow line of $\Gamma$ starting from $x$ with angle $\frac{2\lambda}{\chi}(j-1)$. 
\end{itemize}
Then, the law of random curves $(\ell_1,\ldots,\ell_N)$ is a multiple $N$-$\SLE_\kappa\left(\frac{\kappa-6}{2},\frac{\kappa-6}{2}\right)$ starting from $x$. See Figure~\ref{fig::ell1} for an illustration.
\end{theorem}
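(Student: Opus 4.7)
The plan is to mirror the proof of Theorem~\ref{thm::multiple_odd}, taking the degenerate configuration $x_1=\cdots=x_N=x$. The boundary data in Theorem~\ref{thm::multiple_odd} jumps by $2\lambda$ at each $x_j$; when these points coalesce at $x$, the jumps combine into a single jump of size $2N\lambda$, which by imaginary geometry is consistent with $N$ simultaneously emanating flow lines of distinct angles $\theta_j=\frac{2\lambda}{\chi}(j-1)$, $1\le j\le N$. I would first invoke the Miller--Sheffield flow-line interaction rules to show that for $\kappa\in(0,4)$ the curves $\ell_1,\ldots,\ell_N$ are simple, ordered by angle, and pairwise disjoint away from $x$. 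Reading off the winding prescribed by the boundary values on $(x,u)$ versus $(-\infty,x)$, one checks that every $\ell_j$ must terminate on $(u,\infty)$; the extra jumps of $\pm 4\lambda$ across the $z_i$'s then pin down the hitting points, so that the odd-indexed $\ell_j$ (counted from the top) hit $(u,\infty)$ precisely at $z_1,\ldots,z_M$ and the remaining even-indexed ones land on the complementary arcs, yielding an element of $X_N(\HH;x;u,\infty)$.

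Next, I would verify the conditional law. Fix $j$, condition on $\{\ell_i:i\neq j\}$, and let $\Omega_j$ be the connected component of $\HH\setminus\bigcup_{i\neq j}\ell_i$ whose boundary contains $\ell_{j-1}(\tau_{j-1})$ and $\ell_{j+1}(\tau_{j+1})$. By the restriction property for GFF flow lines, the conditional law of $\Gamma|_{\Omega_j}$ is a GFF with explicit boundary values, and $\ell_j$ is its flow line from $x$ with angle $\theta_j$. A careful bookkeeping of boundary values on $\partial\Omega_j$, using the jumps across the adjacent flow lines and across the nearby $z_i$'s, shows that this data coincides with the boundary data defining an $\SLE_\kappa\left(\frac{\kappa-6}{2},\frac{\kappa-6}{2}\right)$ curve in $\Omega_j$ from $x$ with the two force points $\ell_{j-1}(\tau_{j-1})$ and $\ell_{j+1}(\tau_{j+1})$. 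Finally, I would integrate out the auxiliary sample $\bs z$: the density~\eqref{eqn::density_case1} is calibrated so that it is precisely the marginal law of the odd-indexed hitting points of the flow-line construction, hence after averaging, $\ell_j$ retains the required $\SLE_\kappa\left(\frac{\kappa-6}{2},\frac{\kappa-6}{2}\right)$ conditional law.

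The main obstacle is this marginalisation step, coupled with the fact that all $N$ curves emanate from the single boundary point $x$. In the non-degenerate Theorem~\ref{thm::multiple_odd}, the Coulomb-gas factors $\prod_{i,j}(z_j-x_i)^{-4/\kappa}$ arise naturally from the partition-function calculation; collapsing $x_1,\ldots,x_N\to x$ fuses them into the single factor $\prod_j(z_j-x)^{-4N/\kappa}$ in~\eqref{eqn::density_case1}, and one must verify that this fused partition function still satisfies the relevant BPZ/null-vector equations and produces the correct conditional weights for $\SLE_\kappa\left(\frac{\kappa-6}{2},\frac{\kappa-6}{2}\right)$. Equivalently, one needs to rule out pathological merging at the common starting point $x$ and show that the joint flow-line law is consistent with the $N$-fold fusion of boundary operators at $x$; this is the crux of adapting the generic construction of Theorem~\ref{thm::multiple_odd} to the coincident-start case.
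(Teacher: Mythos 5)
Your proposal correctly identifies the degenerate configuration $x_1=\cdots=x_N=x$ as the route, and the paper does exactly this: its entire proof is the observation that Lemma~\ref{lem::cascade_eta}--Lemma~\ref{lem::curve_properties} never use the strict ordering $x_1<\cdots<x_N$, so the whole Section~\ref{sec::general_ensemble} machinery applies verbatim with all starting points coincident. However, your proposed verification of the conditional law has a genuine gap, and it sits precisely at the point you flag as ``the main obstacle'' without resolving it.

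The problem is that within the single field $\Gamma$ of the statement, only about half of the curves have the required conditional law for free. For $j$ with $N-j$ even, conditioning on $\cup_{i\neq j}\ell_i$ does give a flow line whose boundary data matches $\SLE_\kappa\left(\frac{\kappa-6}{2},\frac{\kappa-6}{2}\right)$ with force points $\ell_{j-1}(\tau_{j-1})$ and $\ell_{j+1}(\tau_{j+1})$ and a free terminal point on the interval between them. But for the remaining indices, the flow line of $\Gamma$ given the other curves is forced to terminate at a \emph{prescribed} point $z_{j+1}$ (this is the Dub\'edat-duality fact recorded in Section~\ref{sec::flow_line}); that is an $\SLE_\kappa$ conditioned on its endpoint, not an $\SLE_\kappa\left(\frac{\kappa-6}{2},\frac{\kappa-6}{2}\right)$. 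Your claim that ``the density~\eqref{eqn::density_case1} is calibrated so that it is precisely the marginal law of the odd-indexed hitting points, hence after averaging $\ell_j$ retains the required conditional law'' asserts exactly what must be proven; no mechanism is given for why mixing over $\bold z$ with this particular weight reconstitutes the correct unconditioned law. The paper resolves this by introducing the \emph{second} construction (Theorem~\ref{thm::same_point_even}, with the $\bold w$-ensemble), proving via the cascade relations and induction that the two constructions define the same measure ($\PP_N=\QQ_N$ in Lemma~\ref{lem::curve_properties}), and then reading off the conditional law of each curve from whichever of the two couplings leaves that curve's endpoint free. Your proposal omits the dual construction and the identity between the two laws entirely, so the conditional-law verification cannot be completed as written. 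The appeal to BPZ/null-vector equations for the fused partition function is also not how the paper proceeds; the relevant input is the explicit integral cascade of Lemmas~\ref{lem::cascade_eta} and~\ref{lem::cascade_gamma}, which survives the fusion $\prod_{i,j}(z_j-x_i)^{-4/\kappa}\to\prod_j(z_j-x)^{-4N/\kappa}$ without modification.
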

\begin{figure}[ht!]
\begin{center}
\includegraphics[width=0.8\textwidth]{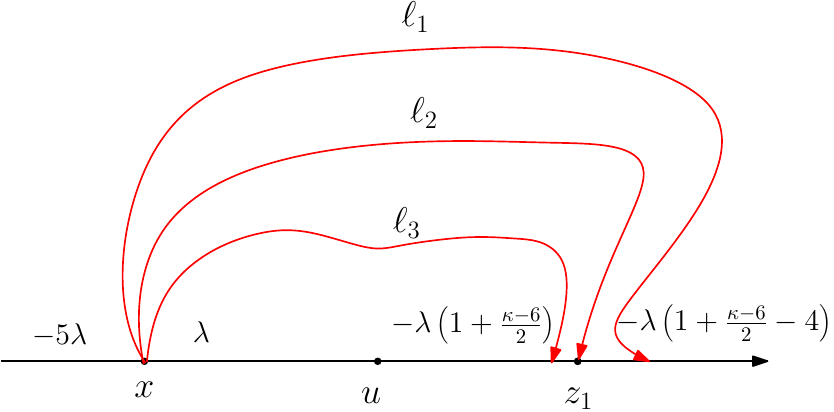}
\end{center}
\caption{\label{fig::ell1} This is an illustration of the Theorem~\ref{thm::multiple_odd} when $N=3$. Note that $\ell_2$ hits $(x_4,+\infty)$ at $z_1$.}
\end{figure}
\begin{theorem}\label{thm::same_point_even}
Fix $N\ge 1$ and $\kappa\in (0,4)$, and fix $x<u$. We construct a probability measure on $(\eta_1,\ldots,\eta_N)\in X_N(\HH;x;u ,\infty)$ as follows.
\begin{itemize}
\item
Sample $\bold w:=(w_1,\ldots,w_{N-M})$ with density (with respect to the Lebesgue measure)
\begin{align}\label{eqn::density_case2}
&\rho(N;x,u;\bold w)\notag\\
=&\frac{1}{\mathcal{W}_N}\one_{\{u<w_1<\cdots<w_{N-M}\}}\prod_{1\le j\le N-M}(w_j-x)^{-\frac{4N}{\kappa}}\prod_{1\le j\le N-M}(w_j-u)^{\frac{2-\kappa}{\kappa}}\prod_{1\le i<j\le N-M}(w_j-w_i)^{\frac{8}{\kappa}},
\end{align}
where the normalisation $\mathcal{W}_N=\mathcal{W}_N(x; u)$ is defined by
\begin{align*}
&\mathcal{W}_N\\
:=&\int_{\{u<w_1<\cdots<w_{N-M}\}}\prod_{1\le j\le N-M}(w_j-x)^{-\frac{4N}{\kappa}}\prod_{1\le j\le N-M}(w_j-u)^{\frac{2-\kappa}{\kappa}}\prod_{1\le i<j\le N-M}(w_j-w_i)^{\frac{8}{\kappa}}\prod_{1\le j\le N-M}\ud w_j.
\end{align*}
\item
Given $(w_1,\ldots,w_{N-M})$, suppose $\hat\Gamma$ is the Gaussian free field with boundary data
\begin{align*}
&\lambda-2N\lambda\text{ on }(-\infty,x),\quad \lambda\text{ on }(x, u),\quad \lambda\left(1+\frac{\kappa-2}{2}\right)\text{ on }(u,w_1),\\
&\lambda\left(1+\frac{\kappa-2}{2}-4(j-1)\right)\text{ on }(w_{j-1},w_j)\text{ for }2\le j\le N-M,\,
\lambda\left(1+\frac{\kappa-2}{2}-4(N-M)\right)\text{ on }(w_{N-M},+\infty).
\end{align*}
For $1\le j\le N$, suppose $\eta_j$ is the flow line of $\hat\Gamma$ starting from $x$ with angle $\frac{2\lambda}{\chi}(j-1)$. 
\end{itemize}
Then, the law of random curves $(\eta_1,\ldots,\eta_N)$ is a multiple $N$-$\SLE_\kappa\left(\frac{\kappa-6}{2},\frac{\kappa-6}{2}\right)$ starting from $x$. See Figure~\ref{fig::eta1} for an illustration.
\end{theorem}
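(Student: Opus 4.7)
The plan is to mirror the proof of Theorem~\ref{thm::multiple_even}, adapted to the confluent regime where the $N$ starting points have merged into the single point $x$. The construction is again two-stage: first sample $\bs{w}=(w_1,\ldots,w_{N-M})$ from the density $\rho(N;x,u;\bs{w})$, then sample the GFF $\hat\Gamma$ with the prescribed piecewise-constant boundary data and take $\eta_j$ to be the flow line of $\hat\Gamma$ from $x$ at angle $\frac{2\lambda}{\chi}(j-1)$. The task is to verify that the resulting law satisfies the resampling property defining a multiple $N$-$\SLE_\kappa\left(\frac{\kappa-6}{2},\frac{\kappa-6}{2}\right)$ started at $x$.

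First I would check that $\rho$ is an honest probability density, i.e.\ that $\mathcal{W}_N<\infty$. Integrability at pairwise collisions $w_i=w_{i+1}$ is automatic since $8/\kappa>2$ when $\kappa<4$; at $u$ the exponent $(2-\kappa)/\kappa>-1$; as $w_j\to+\infty$ the global power count gives integrable decay; and $x<u$ rules out any singularity at $x$. Next, I would verify that $\eta_1,\ldots,\eta_N$ are a.s.\ well-defined simple curves with $\eta_i\cap\eta_j=\{x\}$ for $i\ne j$, each terminating on $(u,+\infty)$. The total jump $-2N\lambda$ across $x$ in the boundary data is exactly what Miller--Sheffield imaginary geometry requires in order to emanate $N$ flow lines from a common boundary point, and the angle ordering $0<\frac{2\lambda}{\chi}<\cdots<\frac{2(N-1)\lambda}{\chi}$ combined with the monotonicity of flow lines of a common GFF enforces the left-to-right ordering and disjointness. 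The boundary values to the right of $w_{N-M}$, once translated through the angle shift, then preclude any $\eta_j$ from escaping along $(u,+\infty)$ without first being absorbed on $(u,+\infty)$.

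The decisive step is identifying the conditional law of $\eta_j$. Fixing $j$, I would first condition on $\bs{w}$ and on $(\eta_i)_{i\ne j}$. By the imaginary-geometry Markov property, the conditional law of $\eta_j$ is the flow line in the component $\Omega_j$ of the GFF obtained by restricting $\hat\Gamma$; uniformising $\Omega_j$ to $\HH$ and reading off the boundary data shows that $\eta_j$ is an $\SLE_\kappa$-type curve from $x$ with the two force points of weight $\frac{\kappa-6}{2}$ at the images of $\eta_{j-1}(\tau_{j-1})$ and $\eta_{j+1}(\tau_{j+1})$, plus \emph{auxiliary} force points at the images of those $w_k$'s that were not swallowed by $\eta_{j-1}\cup\eta_{j+1}$. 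Integrating these auxiliary $w_k$'s against their $\bs{w}$-conditional distribution should eliminate them exactly, by the very choice of $\rho$: the resulting driving process of $\eta_j$ has as its Radon--Nikodym derivative a local martingale proportional to the partition function of $\SLE_\kappa(\frac{\kappa-6}{2},\frac{\kappa-6}{2})$ with only the two geometric force points, so a Girsanov argument identifies the marginal law of $\eta_j$.

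The main obstacle is this last Coulomb-gas integration. In the separated setting of Theorem~\ref{thm::multiple_even} the weights $(w_j-x_i)^{-4/\kappa}$ are distributed among $N$ distinct insertions, but here they fuse into the single factor $(w_j-x)^{-4N/\kappa}$ carrying conformal weight $\frac{N(N\kappa-8)}{2\kappa}$ at $x$, and one must carefully track how this fused insertion transforms under the conformal map from $\Omega_j$ to $\HH$ --- in particular the image of $x$ will in general not be placed at a special location of the target configuration. Once this bookkeeping is carried out and the corresponding Selberg-type identity for $\mathcal{W}_N$ is verified, the remainder of the argument, namely the SLE-martingale characterization and the identification of the driving process as that of $\SLE_\kappa(\frac{\kappa-6}{2},\frac{\kappa-6}{2})$, follows the same template as Theorem~\ref{thm::multiple_even}.
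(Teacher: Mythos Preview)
Your proposal identifies the right target---the resampling property---but the method you sketch for the ``decisive step'' is not what the paper does, and the obstacle you flag (the Coulomb-gas integration of auxiliary $w_k$'s and the bookkeeping of the fused insertion at $x$) is one the paper never has to face.

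The paper's proof is a one-line reduction: the arguments establishing Theorems~\ref{thm::multiple_odd} and~\ref{thm::multiple_even}, namely Lemmas~\ref{lem::cascade_eta}--\ref{lem::curve_properties}, nowhere use that $x_1<\cdots<x_N$ are distinct. One simply substitutes $x_1=\cdots=x_N=x$ and $x_{N+1}=u$, and everything---the cascade relations, the martingale bounds, the induction proving $\PP_N=\QQ_N$---goes through verbatim. In particular, the conditional law of $\eta_j$ is \emph{not} obtained by integrating out stray $w_k$'s inside the single construction $\QQ_N$. Instead, for $j$ with $N-j$ odd the conditional law is read off directly from the flow-line coupling under $\QQ_N$ (there are no auxiliary $w_k$'s in $\Omega_j$, since $\eta_{j\pm 1}$ land exactly at the neighbouring $w$'s), while for $j$ with $N-j$ even one invokes $\PP_N=\QQ_N$ and reads off the conditional law from the other construction $\PP_N$, where now $\ell_{j\pm 1}$ land at the neighbouring $z$'s.

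Your direct-integration route is not obviously wrong, but you have not resolved it: computing the conditional law of the surviving $w_{k+1}$ given $(\eta_i)_{i\ne j}$ is precisely the hard part, and it is not simply the marginal of $\rho$ since the curves themselves depend on $\bs w$ through the boundary data. The paper sidesteps this entirely via the $\PP_N=\QQ_N$ identity; the fused exponent $(w_j-x)^{-4N/\kappa}$ is just what the product $\prod_i(w_j-x_i)^{-4/\kappa}$ becomes under the substitution, and no new transformation law is needed.
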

\begin{figure}[ht!]
\begin{center}
\includegraphics[width=0.8\textwidth]{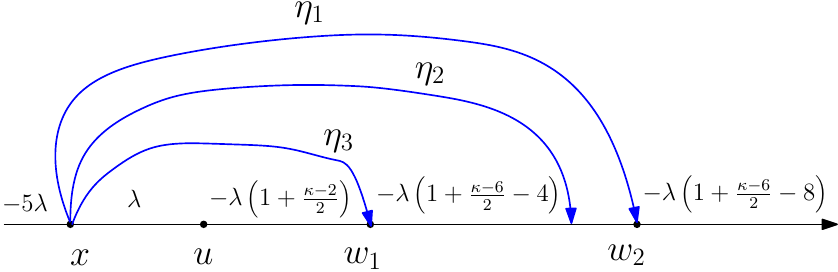}
\end{center}
\caption{\label{fig::eta1} This is an illustration of the Theorem~\ref{thm::multiple_odd} when $N=3$. Note that $\eta_1$ hits $(x_4,+\infty)$ at $w_1$ and $\eta_3$ hits $(x_4,+\infty)$ at $w_2$.}
\end{figure}
Based on Theorem~\ref{thm::uniqueness_multiple}, we can obtain the uniqueness of multiple $\SLE_\kappa$ starting from the same point.
\begin{theorem}\label{thm::same_point_uniqueness}
Fix $N\ge 1$ and $\kappa\in(0,4)$. Fix a polygon $(\Omega;x;u,v)$. The law of multiple $N$-$\SLE_\kappa\left(\frac{\kappa-6}{2},\frac{\kappa-6}{2}\right)$ starting from $x$ on $X_N(\Omega;x;u,v)$ is unique.
\end{theorem}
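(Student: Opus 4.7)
The plan is to reduce Theorem~\ref{thm::same_point_uniqueness} to the distinct-starting-point uniqueness of Theorem~\ref{thm::uniqueness_multiple} by means of a stopping argument that separates the $N$ curves from their common starting point. Fix a polygon $(\Omega;x;u,v)$ and any probability measure $\mu$ on $X_N(\Omega;x;u,v)$ satisfying the defining conditional characterisation. For every $\epsilon>0$ small enough that $B(x,\epsilon)\cap\{u,v\}=\emptyset$, denote by $T_j^\epsilon$ the first exit time of $\gamma_j$ from $B(x,\epsilon)$. Since $\gamma_i\cap\gamma_j=\{x\}$ for $i\neq j$, the exit points $x_j^\epsilon:=\gamma_j(T_j^\epsilon)$ are $\mu$-almost surely pairwise distinct, so that the connected component $\Omega^\epsilon$ of $\Omega\setminus\bigcup_{j=1}^N\gamma_j[0,T_j^\epsilon]$ whose boundary contains $(uv)$ is a topological polygon with $N+2$ distinct marked boundary points $x_1^\epsilon,\ldots,x_N^\epsilon,u,v$.

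By a routine domain-Markov manipulation, conditionally on the $N$-tuple of initial segments $(\gamma_j|_{[0,T_j^\epsilon]})_{j=1}^N$ the continuations $(\gamma_j|_{[T_j^\epsilon,\tau_j]})_{j=1}^N$ satisfy the conditional-law characterisation of multiple $N$-$\SLE_\kappa\left(\frac{\kappa-6}{2},\frac{\kappa-6}{2}\right)$ on the distinct-point polygon $(\Omega^\epsilon;x_1^\epsilon,\ldots,x_N^\epsilon;u,v)$ in the sense of Section~\ref{subsec::intro_multiple}: conditioning on the other $N-1$ full curves together with $\gamma_j|_{[0,T_j^\epsilon]}$, the continuation $\gamma_j|_{[T_j^\epsilon,\tau_j]}$ is the chordal $\SLE_\kappa\left(\frac{\kappa-6}{2},\frac{\kappa-6}{2}\right)$ in the appropriate sub-domain with the two force points on the free arc. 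Theorem~\ref{thm::uniqueness_multiple} therefore uniquely pins down the regular conditional distribution of the continuations given the initial segments, and the problem is reduced to showing that the marginal law of the initial segments is itself uniquely determined for every $\epsilon>0$.

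This last step is the main obstacle. I plan to treat it by adapting the resampling argument that underlies Theorem~\ref{thm::uniqueness_multiple}: given two candidate measures $\mu$ and $\mu'$, introduce the Markov chain on $N$-tuples of curves that picks an index $j$ uniformly and replaces $\gamma_j$ by an independent sample from its conditional law given the other $N-1$ curves. This kernel depends only on the conditional characterisation, so both $\mu$ and $\mu'$ are invariant. Standard absolute continuity and coupling estimates for $\SLE_\kappa\left(\frac{\kappa-6}{2},\frac{\kappa-6}{2}\right)$ allow one to couple two copies of the chain so that they agree on the initial segments after finitely many resamplings, forcing the two marginals to coincide. The delicate point is that the common starting point $x$ is singular for these estimates; I would circumvent this by performing all couplings only after each exit time $T_j^\epsilon$ (which places us inside the distinct-starting-points regime already handled by Theorem~\ref{thm::uniqueness_multiple}), and then pass to the limit $\epsilon\to 0$ using continuity in the curve metric~\eqref{eqn::curve_metric} together with the transience of $\SLE_\kappa\left(\frac{\kappa-6}{2},\frac{\kappa-6}{2}\right)$ for $\kappa\in(0,4)$ to ensure that no mass escapes at the confluent starting point. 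Combining the unique conditional law of the continuations with the unique marginal of the initial segments then yields $\mu=\mu'$ on $X_N(\Omega;x;u,v)$.
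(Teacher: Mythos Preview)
Your reduction in the first two paragraphs is sound and matches the paper: stopping each curve upon exiting $B(x,\epsilon)$ and invoking Theorem~\ref{thm::uniqueness_multiple} does pin down the conditional law of the continuations given the initial segments. The difficulty, as you recognise, is the marginal law of the initial segments, and here your plan has a genuine gap.

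Your proposed fix---run the resampling chain but perform the coupling ``only after each exit time $T_j^\epsilon$''---does not close the loop. The resampling kernel replaces an entire curve $\gamma_j$ by an $\SLE_\kappa\!\left(\tfrac{\kappa-6}{2},\tfrac{\kappa-6}{2}\right)$ in $\Omega_j$ from $x$; for two chain states with different neighbouring curves the domains $\Omega_j$ differ \emph{at} $x$, and you would need an absolute-continuity or coupling statement for two such $\SLE$'s on their initial segments near the common singular starting point. That is precisely the estimate that is \emph{not} standard, and it is why the tube argument behind Theorem~\ref{thm::uniqueness_multiple} breaks down here (one cannot choose disjoint tubes containing $x$). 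Saying you will ``pass to the limit $\epsilon\to 0$'' does not help: coupling the continuations after $T_j^\epsilon$ leaves the initial segments uncoupled, so nothing forces the $\epsilon$-marginals of $\mu$ and $\mu'$ to agree for any fixed $\epsilon$, and there is no limit to take.

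The paper avoids this obstacle entirely by a different route. It argues by induction on $N$: assuming uniqueness for $N-1$ curves, it suffices to determine the law of the single outermost curve $\gamma_N$. To do this, it stops $\gamma_1,\ldots,\gamma_{N-1}$ at radius $\epsilon$ and $\gamma_N$ at a larger radius $\epsilon_0$, applies Theorem~\ref{thm::uniqueness_multiple} and the explicit Radon--Nikodym martingale of Lemma~\ref{lem::cascade_eta} to write the conditional law of $\gamma_N$ after $\epsilon_0$ as a reweighting of an $\SLE_\kappa(2(N-1);\tfrac{\kappa-6}{2})$, and then sends $\epsilon\to 0$ (using uniform bounds on the partition-function martingale) and finally $\epsilon_0\to 0$ to obtain an explicit SDE~\eqref{eqn::SDE_gamma} for the driving function of $\gamma_N$ valid from time $0$. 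This SDE determines the law of $\gamma_N$ outright, and the induction hypothesis finishes the proof. The key point is that one never tries to control the joint initial-segment law directly; instead one extracts a characterisation of a single curve that survives the $\epsilon\to 0$ limit.
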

Define $\psi(z):=\frac{1}{z}$ for $z\in\R$.  Combining Theorem~\ref{thm::same_point_odd}--Theorem~\ref{thm::same_point_uniqueness} together, we can obtain two $\beta$-Jacobi ensembles from one multiple $N$-$\SLE_\kappa\left(\frac{\kappa-6}{2},\frac{\kappa-6}{2}\right)$ starting from the same point. Moreover, under this coupling, these two $\beta$-Jacobi  ensembles interlace.
\begin{corollary}\label{coro::same_point_hitting_law}
Fix $N\ge 1$ and $\kappa\in(0,4)$. Sample $N$-multiple $\SLE_\kappa$ on $(\HH;0;1,\infty)$, which we denote by $(\gamma_1,\ldots,\gamma_N)$. Denote by $\tau_j$ the hitting time of $\gamma_j$ at $(1,+\infty)$ for $1\le j\le N$. Then, the law of hitting points with odd indices $(\psi(\gamma_{N-1}(\tau_{N-1})),\psi(\gamma_{N-3}(\tau_{N-3})),\ldots,\psi(\gamma_{N-2M+1}(\tau_{N-2M+1})))$ equals Jacobi$\left(M;\frac{8}{\kappa},\frac{2N-4M+1}{2},\frac{3}{2}\right)$. Similarly, the law of hitting points with even indices 
\[(\psi(\gamma_N(\tau_N)),\psi(\gamma_{N-2}(\tau_{N-2})),\ldots,\psi(\gamma_{N-(2N-2M-2)}(\tau_{N-(2N-2M-2)})))\]
 equals Jacobi$\left(N-M;\frac{8}{\kappa},\frac{4M-2N+3}{2},\frac{1}{2}\right)$.
\end{corollary}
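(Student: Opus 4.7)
The plan is to combine the uniqueness statement (Theorem~\ref{thm::same_point_uniqueness}) with the two explicit flow-line constructions (Theorems~\ref{thm::same_point_odd} and~\ref{thm::same_point_even}), and then perform a direct change of variables under $\psi(z)=1/z$ to match the resulting densities with the standard $\beta$-Jacobi forms.

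First I would use Theorem~\ref{thm::same_point_uniqueness} to identify, jointly in the coordinates $(\gamma_1,\ldots,\gamma_N)$, the law of the given multiple $\SLE$ with the law of $(\ell_1,\ldots,\ell_N)$ from Theorem~\ref{thm::same_point_odd} (with $x=0$, $u=1$), and separately with that of $(\eta_1,\ldots,\eta_N)$ from Theorem~\ref{thm::same_point_even}. The auxiliary points $(z_1,\ldots,z_M)$ appearing in the former construction must then be identified a.s.\ with the hitting points on $(1,+\infty)$ of the appropriate flow lines. This is read off from the boundary data of $\Gamma$: each $z_k$ is a point where the boundary data jumps by $-4\lambda$, which is exactly the jump produced when a flow line of angle equal to an even multiple of $2\lambda/\chi$ terminates on the real line. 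Matching angles, the $z_k$'s are the hitting points of $\ell_{N-1},\ell_{N-3},\ldots,\ell_{N-2M+1}$ (in the natural order), and the $w_k$'s are the hitting points of $\eta_N,\eta_{N-2},\ldots,\eta_{N-2(N-M-1)}$. Via the coupling provided by uniqueness, this yields the desired identification of the odd- and even-indexed hitting points of $(\gamma_1,\ldots,\gamma_N)$ with the samples $\bold z$ and $\bold w$.

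Next I would push forward the density $r(N;0,1;\bold z)$ under $y_j=\psi(z_j)=1/z_j$. Using
\begin{equation*}
z_j-1=\frac{1-y_j}{y_j},\qquad z_j-z_i=\frac{y_i-y_j}{y_iy_j},\qquad |dz_j|=\frac{dy_j}{y_j^2},
\end{equation*}
and collecting powers of $y_j$ (noting that each $y_k$ appears in $M-1$ of the pair-factors), the pushforward density is proportional to
\begin{equation*}
\prod_{j=1}^{M}y_j^{(4N-8M+2-\kappa)/\kappa}(1-y_j)^{(6-\kappa)/\kappa}\prod_{1\le i<j\le M}|y_i-y_j|^{8/\kappa}.
\end{equation*}
Matching against the standard $\beta$-Jacobi density $\prod y_j^{\beta a/2-1}(1-y_j)^{\beta b/2-1}\prod|y_i-y_j|^{\beta}$ reads off $\beta=8/\kappa$, $b=3/2$, and $a=(2N-4M+1)/2$, giving exactly Jacobi$(M;8/\kappa,(2N-4M+1)/2,3/2)$. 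An entirely parallel calculation starting from $\rho(N;0,1;\bold w)$ produces Jacobi$(N-M;8/\kappa,(4M-2N+3)/2,1/2)$. The interlacing of the two ensembles is then a purely topological fact: under the common coupling provided by the uniqueness of the multiple $\SLE$, the odd- and even-indexed hitting points on $(1,+\infty)$ alternate along the real line (because the $\gamma_j$'s are non-crossing simple curves emanating from a common point), and this alternation is preserved, though reversed in order, by $\psi$.

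The main (mild) obstacle is the first step: although $(z_j)$ and $(w_j)$ enter Theorems~\ref{thm::same_point_odd}--\ref{thm::same_point_even} only as auxiliary sampled points, one must verify from the flow-line picture that they coincide with the hitting points of specifically labelled flow lines, and that these in turn correspond under uniqueness to the indexing $\gamma_{N-1},\gamma_{N-3},\ldots$ (resp.\ $\gamma_N,\gamma_{N-2},\ldots$) used in the statement. Once this indexing is pinned down, the rest of the proof is bookkeeping with the exponents above.
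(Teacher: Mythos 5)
Your proposal is correct and follows essentially the same route as the paper: the paper's proof simply invokes the flow-line hitting-point descriptions of Section 2.2 (which identify $z_{j+1}$ with the endpoint of $\ell_{N-(2j+1)}$ and $w_{j+1}$ with that of $\eta_{N-2j}$) together with Theorems~\ref{thm::same_point_odd}--\ref{thm::same_point_uniqueness}, leaving the change of variables $y=1/z$ implicit, which you carry out explicitly and correctly. Note only that your exponent matching uses the standard convention $y^{\beta a/2-1}(1-y)^{\beta b/2-1}$ rather than the $y^{a}(1-y)^{b}$ written in Section~\ref{sec::Jacobi}; this is the convention under which the stated parameters are the right ones.
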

In~\cite{HealeyLawlerNSidedRadialSLE}, the authors considered the $N$-sided radial $\SLE_\kappa$ for $\kappa\in (0,4)$. In that setting, the driving function is given by circular Dyson's Brownian motion. But it is not clear that how to recover $\beta$-ensemble itself from $\SLE$-type curves. In this paper, we answer this question in a special setting. Moreover, from the connection between flow lines and GFF, our results indicate that eigenvalues of standard matrix models should appear in GFF naturally. This is related the central limit theorem of $\beta$-ensebmble.
\subsection{Critical Ising model}
Fix a topological polygon $(\Omega;x_1,\ldots,x_N;x_{N+1},x_{N+2})$. Suppose $(\Omega_\delta;x_1^\delta,\ldots,x_N^\delta;x_{N+1}^\delta,x_{N+2}^\delta)$ is a discrete topological polygon on $\delta\Z^2$, such that $(\Omega_\delta;x_1^\delta,\ldots,x_N^\delta;x_{N+1}^\delta,x_{N+2}^\delta)$ converges to $(\Omega;x_1,\ldots,x_N;x_{N+1},x_{N+2})$ in the close-Carath\'eodory sense (see~\cite[Theorem 4.2]{KarrilaConformalImage}) : 
\begin{itemize}
\item
There exists conformal maps $\varphi_\delta$ from the unit disk $\U$ onto $\Omega_\delta$ for every $\delta>0$ and conformal map $\varphi$ from $\U$ onto $\Omega$, such that $\varphi_\delta$ converges to $\varphi$ locally uniformly. Moreover, we require that $x_j^\delta$ converges to $x_j$ and $\varphi^{-1}_\delta(x_j^\delta)$ converges to $\varphi^{-1}(x_j)$.
\item
Given a reference point $u\in\Omega$ and $r>0$ small enough, let $S_r$ be the connected component of $\partial B(x_j,r)\cap\Omega$ disconnecting $x_j$ from $u$ and from the boundary arc $(x_{j+2}x_{j+1})$. We require that, for
each  $r$ small enough and for all sufficiently small $\delta$ (depending on $r$),  the boundary point $x_j^\delta$ connects to the midpoint of $S_r$ inside $\Omega_\delta\cap B(x_j,r)$.
\end{itemize}
Define $\hat\Omega_\delta$ to be the graph obtained by adding all the squares adjacent to the boundary arc $(x_{N+2}^\delta x_{N+1}^\delta)$.  The critical Ising model on $\Omega_\delta$ with boundary conditions~\eqref{eqn::boundary_data}  is a random assignment $\sigma$ from faces of $\hat\Omega_\delta$ to $\{\pm 1\}$, such that $\sigma$ equals $(-1)^j$ on the  squares in $\hat\Omega_\delta\setminus\Omega_\delta$  adjacent to $(x_j^\delta x_{j+1}^\delta)$ for $0\le j\le N$ (we use the convention $x_0^\delta=x_{N+2}^\delta$) and the law of $\sigma$ is given by
\[\PP[\sigma=\sigma_0]=\frac{1}{Z}e^{\beta_c\sum_{x,y\in\hat\Omega_\delta: x\sim y}\sigma_0(x)\sigma_0(y)},\]
where $\beta_c=-\frac{1}{2}\log(\sqrt 2-1)$ and the normalisation $Z$ is defined by 
\[Z=\sum_{\sigma}e^{\beta_c\sum_{x,y\in\hat\Omega_\delta: x\sim y}\sigma_0(x)\sigma_0(y)}.\]
For $1\le j\le N$, denote by $\gamma^\delta_j$ the interface starts from $x^\delta_j$: it is a discrete path on $\delta\Z^2$ which starts from $x_\delta$ and it continues such that $(-1)^j$ spin is on the left and $(-1)^{j+1}$ spin is on the right. It turns left when there exist two possible ways to continue. Recall that we denote by $A_\delta=\{\gamma^\delta_i\text{ ends at }(x^\delta_{N+1} x^\delta_{N+2}),\text{ for }1\le i\le N\}$.

Recall the definitio of partition function $\LR_N :\Xi_{N+1}=\{(y_1,\ldots,y_{N+1}):y_1<\ldots<y_{N+1}\}\to\R$ in~\cite[Equation 3.6--Equation 3.8]{FengWuYangIsing}. For $n\ge 1$, denote by $\Pi_n$ the set of pair partitions $\omega=\{\{a_1,b_1\},\ldots,\{a_n,b_n\}\}$ of $\{1,\ldots,2n\}$, such that $a_1<\ldots<a_n$ and $a_j<b_j$ for $1\le j\le n$. Define $\text{sgn }(\omega)$ to be the sign of the product $\prod_{1\le i<j\le n}(a_i-a_j)(a_i-b_j)(b_i-a_j)(b_i-b_j)$.
\begin{itemize}
\item
When $N=2m$ for some $m\ge 1$, define
\[R_N(y_1,\ldots,y_{2m};y_{2m+1}):=\prod_{1\le k\le 2m}\frac{1}{\sqrt{y_{2m+1}-y_k}}\times\sum_{\omega\in\Pi_m}\text{ sgn }(\omega)\prod_{\{a,b\}\in\omega}\frac{2y_{2m+1}-y_{a}-y_{b}}{y_{b}-y_{a}}.\]
\item
When $N=2m+1$ for some $m\ge 0$, define
\[R_N(y_1,\ldots,y_{2m+1};y_{2m+2}):=\prod_{1\le k\le 2m+1}\frac{1}{\sqrt{y_{2m+2}-y_k}}\times\sum_{\omega\in\Pi_{m+1}}\text{ sgn }(\omega)\prod_{\substack{\{a,b\}\in\omega\\b\neq 2m+2}}\frac{2y_{2m+2}-y_{a}-y_{b}}{y_{b}-y_{a}}.\]
\end{itemize}
\begin{theorem}\label{thm::Ising_limit}
Suppose $(\Omega_\delta;x_1^\delta,\ldots,x_N^\delta;x_{N+1}^\delta,x_{N+2}^\delta)$ converges to $(\Omega;x_1,\ldots,x_N;x_{N+1},x_{N+2})$ in the close-Carath\'eodory sense. Fix a conformal map $\phi$ from $\Omega$ onto $\HH$ such that $\phi(x_1)<\ldots<\phi(x_{N+1})$ and $\phi(x_{2N+2})=\infty$. Then, we have ($\kappa=3$)
\begin{align}\label{eqn::limt_prob}
&\lim_{\delta\to 0}\PP[A_\delta]\notag\\
=&\frac{1}{B\left(\frac{2}{\kappa},\frac{2}{\kappa}\right)^{\left[\frac{N}{2}\right]}}\prod_{1\le i<j\le N}(\phi(x_j)-\phi(x_i))^{\frac{2}{\kappa}}\prod_{1\le i\le N}(\phi(x_{N+1})-\phi(x_i))^{\frac{\kappa-6}{2\kappa}}\times\frac{\mathcal Z_N(\phi(x_1),\ldots,\phi(x_N);\phi(x_{N+1}))}{\LR_N(\phi(x_1),\ldots,\phi(x_N);\phi(x_{N+1}))}\notag\\
=&\frac{1}{B\left(\frac{2}{\kappa},\frac{2}{\kappa}\right)^{\left[\frac{N+1}{2}\right]}}\prod_{1\le i<j\le N}(\phi(x_j)-\phi(x_i))^{\frac{2}{\kappa}}\prod_{1\le i\le N}(\phi(x_{N+1})-\phi(x_i))^{\frac{\kappa-2}{2\kappa}}\times\frac{\mathcal W_N(\phi(x_1),\ldots,\phi(x_N);\phi(x_{N+1}))}{\LR_N(\phi(x_1),\ldots,\phi(x_N);\phi(x_{N+1}))},
\end{align}
where $B(\cdot,\cdot)$ is the Beta function. Moreover, given $A_\delta$, the conditional law of interfaces $(\gamma_1^\delta,\ldots,\gamma_N^\delta)$ converges to multiple  $N$-$\SLE_\kappa\left(\frac{\kappa-6}{2},\frac{\kappa-6}{2}\right)$ on $X_N(\Omega;x_1,\ldots,x_N;x_{N+1},x_{N+2})$, under the metric~\eqref{eqn::curve_metric}.
\end{theorem}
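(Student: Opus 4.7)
The plan is to combine (i) the single-interface convergence of $\gamma_1^\delta$ established in~\cite{IzyurovObservableFree} with the explicit partition function $\LR_N$ from~\cite{FengWuYangIsing}; (ii) Kemppainen--Smirnov tightness for Ising interfaces; and (iii) the uniqueness Theorem~\ref{thm::uniqueness_multiple} for multiple $N$-$\SLE_\kappa\left(\tfrac{\kappa-6}{2},\tfrac{\kappa-6}{2}\right)$. I would first identify $\PP[A_\delta]$ as a ratio of discrete Ising partition functions and pass to the limit to obtain~\eqref{eqn::limt_prob}. Concretely, the process $\mathcal{M}_\delta(t)=\PP[A_\delta\mid \gamma_1^\delta[0,t]]$ is a bounded martingale in the filtration of $\gamma_1^\delta$; by the spatial Markov property of Ising, $\mathcal{M}_\delta(t)$ equals the probability of an analogous ``all remaining interfaces end on the free arc'' event in the slit discrete polygon, so induction on $N$ expresses it as a ratio of explicit discrete partition functions. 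Passing to the scaling limit via~\cite{IzyurovObservableFree,FengWuYangIsing} identifies $\lim_\delta \mathcal{M}_\delta(t)$ with the continuous martingale $\mathcal{Z}_N/\LR_N$ (or equivalently $\mathcal{W}_N/\LR_N$) times the conformal-covariance prefactors displayed in~\eqref{eqn::limt_prob}, evaluated along the limiting Loewner flow; evaluation at $t=0$ yields the claimed value of $\lim_\delta \PP[A_\delta]$. Equivalence of the two expressions is forced by Corollary~\ref{coro::hitting_law}.

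For the convergence of interfaces, tightness of $(\gamma_j^\delta)_{j=1}^N$ given $A_\delta$ follows from Kemppainen--Smirnov crossing estimates, which survive conditioning on an event whose probability has a positive limit by the previous step. Let $(\gamma_1,\ldots,\gamma_N)\in X_N(\Omega;x_1,\ldots,x_N;x_{N+1},x_{N+2})$ be any subsequential weak limit; to identify its law, fix $j$ and condition on $\bigcup_{i\neq j}\gamma_i$. By the spatial Markov property of the Ising model, the conditional law of $\gamma_j^\delta$ (still under $A_\delta$) is that of an Ising interface in the slit component $\Omega_j^\delta$ containing $x_j^\delta$, with alternating $\pm$ boundary data along the arc ending at $x_j$, free boundary data on the sub-arc of $(x_{N+1}x_{N+2})$ between $\gamma_{j-1}(\tau_{j-1})$ and $\gamma_{j+1}(\tau_{j+1})$, and the remaining data inherited from the other curves, conditioned on the interface reaching the free sub-arc. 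This is exactly a single-curve problem of the type treated in~\cite{IzyurovObservableFree}, whose scaling limit is $\SLE_\kappa\left(\tfrac{\kappa-6}{2},\tfrac{\kappa-6}{2}\right)$ on $\Omega_j$ from $x_j$ with force points at $\gamma_{j-1}(\tau_{j-1})$ and $\gamma_{j+1}(\tau_{j+1})$. Hence the subsequential limit satisfies the conditional-law characterisation of multiple $N$-$\SLE_\kappa\left(\tfrac{\kappa-6}{2},\tfrac{\kappa-6}{2}\right)$, and Theorem~\ref{thm::uniqueness_multiple} upgrades subsequential convergence to convergence of the full law.

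The main obstacle is the precise identification of the prefactor in~\eqref{eqn::limt_prob}: one must match the discrete Ising ratio $\PP[A_\delta]$ with the ratio of Selberg-type integrals $\mathcal{Z}_N/\LR_N$ together with the Coulomb-gas conformal factors $\prod_{i<j}(\phi(x_j)-\phi(x_i))^{2/\kappa}\prod_i(\phi(x_{N+1})-\phi(x_i))^{(\kappa-6)/(2\kappa)}$ and the Beta-function normalisations. This demands either a refined asymptotic analysis of Ising fermionic/spinor observables at all $N+2$ marked points (extending~\cite{IzyurovObservableFree} substantially) or a careful inductive peeling argument tracking how the partition function transforms when the domain is slit by one interface. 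A secondary technicality is ruling out escape of mass of the interfaces near the endpoints $x_{N+1},x_{N+2}$ of the free arc, which is handled by uniform crossing estimates for critical Ising on polygonal domains.
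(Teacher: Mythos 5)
Your overall architecture (induction on $N$, domain Markov property to peel off one interface, a partition-function martingale to identify the limit, and Theorem~\ref{thm::uniqueness_multiple} to upgrade subsequential limits) matches the paper's proof. But the step you yourself flag as ``the main obstacle'' --- matching the discrete ratio $\PP[A_\delta]$ with $\mathcal Z_N/\LR_N$ times the Coulomb-gas prefactors --- is precisely the mathematical content of the theorem, and your proposal does not supply it. The paper fills this gap with three concrete ingredients that are absent from your sketch. First, one needs the cascade relation (Lemma~\ref{lem::cascade_eta}): when the curve started at $x_N$ is run until it hits the free arc, the quantity $\mathcal Z_N(g_\tau(x_1),\ldots,W_\tau;g_\tau(x_{N+1}))$ degenerates to $\mathcal W_{N-1}(g_\tau(x_1),\ldots;W_\tau)$, and correspondingly $\LR_N$ degenerates to $\LR_{N-1}$; this is exactly what makes the terminal value of the continuum martingale $N_t=F_N(g_t(y_1),\ldots,W_t;g_t(y_{N+1}))$ equal $F_{N-1}$ at the hitting time, so that the induction closes. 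Second, the local-martingale property of $N_t$ is not automatic: it requires knowing that $\LR_N$ satisfies the BPZ null-vector PDEs~\eqref{eqn::BPZ} with the correct conformal weight $\tfrac{1}{16}$ at the free-arc endpoint $y_{N+1}$; the paper derives this (Lemma~\ref{lem::mart_gammaN}) by applying Dub\'edat's commutation relations to the subsequential limit curves and then pinning down the unknown constant $\mu$ through the boundary asymptotics of $\LR_N$ as $y_N\to y_{N+1}$. Third, to pass from local martingale to the identity $\lim_\delta\PP[A_\delta]=N_0$ one needs uniform bounds on $\mathcal Z_N/(\cdots\LR_N)$ on compact subsets of the chamber together with positivity of $\LR_N$ (Lemma~\ref{lem::bound_control}) and the RSW estimates of Lemma~\ref{lem::conv_aux1} to control the indicator $\one_{A_\delta}$ and rule out the curves degenerating at $x_{N+1},x_{N+2}$; your appeal to ``uniform crossing estimates'' gestures at the latter but the optional-stopping/dominated-convergence argument with the stopping times $\tau_\eps\wedge\sigma_\eps\wedge S_K$ is where the formula is actually extracted.

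A secondary issue is your identification of the conditional law of $\gamma_j^\delta$ given the other curves and $A_\delta$ as ``exactly a single-curve problem of the type treated in~\cite{IzyurovObservableFree}.'' Izyurov's theorem gives convergence of the \emph{unconditioned} interface with a drift determined by $\LR_N$, and only up to a stopping time; converting this into the statement that the conditioned curve is $\SLE_\kappa\left(\frac{\kappa-6}{2},\frac{\kappa-6}{2}\right)$ all the way to the free arc requires exactly the change-of-measure computation above (plus the $N=1$ fact that $\PP[A_\delta]\to 1$, so the conditioning is asymptotically trivial in the slit domain). The paper avoids conditioning on $N-1$ curves simultaneously: it identifies the law of $\gamma_N$ alone via the martingale, then invokes the induction hypothesis for the remaining curves in the slit domain and Theorem~\ref{thm::multiple_odd} together with Theorem~\ref{thm::uniqueness_multiple}. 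So the skeleton of your argument is sound, but as written the proof of the limit~\eqref{eqn::limt_prob} is missing its core computation.
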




\section{Preliminaries}
\label{sec::pre}
\subsection{Loewner chains and SLE}
\label{sec::SLE}
A compact subset $K$ of $\overline{\HH}$ is an $\HH$-hull if $\HH\setminus K$ is simply connected. By Riemann's mapping theorem, there exists a unique conformal map $g_K$ from $\HH\setminus K$ onto $\HH$ normalized at $\infty$: $\lim_{z\to\infty}|g_K(z)-z|=0$. A Loewner chain is a sequence of growing $\HH$-hulls determined by the Loewner equation: for $z\in\overline{\HH}$, 
\[\partial_t g_t(z)=\frac{2}{g_t(z)-W_t},\quad g_0(z)=z, \]
where $(W_t: t\ge 0)$ is a real-valued continuous function, which we call driving function. Define $\tau(z):=\sup\{t\ge 0: \min_{s\in [0,t]}|g_s(z)-W_s|>0\}$ and define $K_t:=\{z\in\HH: \tau(z)\le t\}$. Then $g_t$ is the unique conformal map from $\HH\setminus K_t$ onto $\HH$ normalized at $\infty$. The collection of $\HH$-hulls $(K_t: t\ge 0)$ is called a Loewner chain. 


For $\kappa\ge0$, the $\SLE_{\kappa}$ process in $\HH$ from $0$
to $\infty$ is a random Loewner chain driving by $W_t=\sqrt{\kappa}B_t$ where $(B_t: t\ge 0)$ is a standard one-dimensional Brownian motion. 
Rohde and Schramm~\cite{RohdeSchrammSLEBasicProperty} proved that there exists a curve $\gamma$ such that $\HH\setminus K_t$ is the same as the unbounded connected component of $\HH\setminus\gamma[0,t]$. We call this curve $\SLE_{\kappa}$ curve in $\HH$ from $0$ to $\infty$. Such curve is simple when $\kappa\in (0,4]$. In general simply connected domains, the $\SLE_{\kappa}$ curve is defined via conformal invariance.
\subsection{GFF flow lines}
\label{sec::flow_line}
We will give a very brief summary on Gaussian free field (GFF) and its flow lines, more detailed information can be found in~\cite{MillerSheffieldIG1}. Suppose $D\subset\C$ is a simply connected domain.  The zero-boundary GFF on $D$ is a random distribution, such that for every smooth function $f$ with compact support on $D$, we have that $(h,f)$ is a Gaussian variable with
\[\E[(h,f)]=0\quad\text{ and }\quad \E[(h,f)^2]=\int_{D}\int_D f(x)G_D(x,y)f(y)dx dy,\]
where $G_D(\cdot,\cdot)$ is the Green function on $D$ with Dirichlet boundary condition, and the normalisation is given by that $G_D(x,y)+\log|x-y|$ is bounded when $x\to y$.
The GFF with boundary data $\Gamma_0$ is the sum of the zero-boundary GFF on $D$ and the harmonic extension of $\Gamma_0$ on $D$. 

Now we consider coupling between GFF in $\HH$ and SLE processes.
We fix $\lambda=\pi/\sqrt{\kappa}$. 
 Fix $x_1<\cdots<x_n$ and $\rho_1, \ldots, \rho_{n+1}\in\R$ and fix $i\in \{1, \ldots, n\}$.  On the one hand, we define 
\[\overline{\rho}_j=\sum_{k=j}^{i}\rho_k,\text{ for }j\in\{1, \ldots, i\}; \qquad \overline{\rho}_j=\sum_{k=i+1}^j\rho_k,\text{ for }j\in\{i+1, \ldots, n+1\},\]
and consider GFF $\Gamma$ in $\HH$ with the boundary data: (with convention $x_0=-\infty$ and $x_{n+1}=\infty$)
\begin{align*}
-\lambda(1+\overline{\rho}_j) \text{ on }(x_{j-1}, x_j),\text{ for }j\in \{1, \ldots, i\}\, ;\, \lambda(1+\overline{\rho}_{j+1})\text{ on }(x_j, x_{j+1}), \text{ for }j\in \{i, \ldots, n\}. 
\end{align*}
On the other hand, we consider $\gamma\sim\SLE_{\kappa}(\rho_1, \ldots, \rho_{i}; \rho_{i+1}, \ldots, \rho_{n+1})$ in $\HH$ starting from $x_i$ with force points $x_1^L=x_i^-,\ldots,x_{i}^L=x_1$ on its left and $x_1^R=x_i^+,\ldots,x_{n-i+1}^R=x_n$. The driving function $(W_t:t\ge 0)$ of $\gamma$ satisfies the SDEs
\begin{align}\label{eqn::rhoSDE}
\begin{cases}
d W_t=\sqrt{\kappa}d B_t + \sum_{1\le j\le i}\frac{\rho_j dt}{W_t-V_t^{j,L}}+\sum_{1\le j\le n-i+1}\frac{\rho_{j+1} dt}{W_t-V_t^{j,R}},\quad W_0=x_i; \\
d V_t^{j,L} = \frac{2d t}{V_t^{j,L}-W_t},\quad V_0^{j,L}=x_j^L \text{ for }1\le j\le i;\quad d V_t^{j,R} = \frac{2d t}{V_t^{j,R}-W_t},\quad V_0^{j,R}=x_j^R \text{ for }1\le j\le n-i+1,
\end{cases}
\end{align}
where $(B_t:t\ge 0)$ is the standard one-dimensional  Brownian motion starting from $0$.
It is shown in~\cite{MillerSheffieldIG1} that there exists a coupling $(\Gamma, \gamma)$ such that $\gamma$ is the ``flow line" of $\Gamma$. In particular, the law of flow line is determined by the boundary data. The flow line with angle $\theta$ is the flow line of $\Gamma+\theta\chi$. In general simply connected domains, the coupling between GFF and flow lines is defined via conformal invariance.
The theory of how the flow lines of the GFF interact with each other is developed in~\cite{MillerSheffieldIG1}. 
We list the properties used in this paper below.

Denote by $\PP_N=\PP_N(x_1,\ldots,x_{N};x_{N+1})$ the probability measure corresponding to the coupling given in Theorem~\ref{thm::multiple_odd}.  We have the following properties.
\begin{itemize}
\item
By~\cite{MillerSheffieldIG1}, for $1\le j\le N$,  the curve $\ell_j$ has the same law as $\SLE_\kappa\left(2,\ldots,2; 2,\ldots,2,\frac{\kappa-6}{2},-4,\ldots,-4\right)$, with force points $x_1^L=x_{j-1},\ldots, x_{j-1}^L=x_1$ and $x_1^R=x_{j+1},\ldots, x^R_{N-j}=x_{N}$, $x_{N-j+1}^R=x_{N+1}$, $x_{N-j+1+m}^R=z_m$ for $1\le m\le M$. Under the coupling of flow lines, almost surely, $\ell_{m-1}$ is on the left to $\ell_{m}$ for $2\le m\le N$.
\item
Denote by $z_0:=x_{N+1}$ and by $z_{M+1}:=\infty$. By the same proof as~\cite[Lemma 15]{DubedatSLEDuality},  for $j\ge 0$ with $2j+1\le N$, we have that $\ell_{N-(2j+1)}$ ends at $z_{j+1}$ almost surely. Moreover, for $1\le j\le N$, the random curve $\ell_j$ hits $\R$ only at its two ends almost surely.
\item
For $j\ge 0$ and $2j\le N$, given $\cup_{i\neq N-2j}\ell_i$, if we denote by $\Omega_{N-2j}$ the connected component of $\HH\setminus \cup_{i\neq N-2j}\ell_{i}$ which contains $x_{N-2j}$, the conditional law of $\ell_{N-2j}$ equals $\SLE_\kappa\left(\frac{\kappa-6}{2},\frac{\kappa-6}{2}\right)$ starting from $x_{N-2j}$ with force points $z_j$ on its right and $z_{j+1}$ on its left. By~\cite[Lemma 4.3]{MillerSheffieldIG1}, for $j\ge 0$ with $2j\le N$, we have that $\ell_{N-2j}$ ends at $(z_j,z_{j+1})$ almost surely.
\end{itemize}

Denote by $\QQ_N=\QQ_N(x_1,\ldots,x_N;x_{N+1})$ the probability measure corresponding to the coupling given in Theorem~\ref{thm::multiple_even}. We have the following properties.
\begin{itemize}
\item
By~\cite{MillerSheffieldIG1}, for $1\le j\le N$, the curve $\eta_j$ has the same law as $\SLE_\kappa\left(2,\ldots,2; 2,\ldots,2,\frac{\kappa-2}{2},-4,\ldots,-4\right)$, with force points $x_1^L=x_{j-1},\ldots, x_{j-1}^L=x_1$ and $x_1^R=x_{j+1},\ldots, x^R_{N-j}=x_{N}$, $x_{N-j+1}^R=x_{N+1}$, $x_{N-j+1+m}^R=w_m$ for $1\le m\le N-M$. Under the coupling of flow lines, almost surely, $\eta_{m-1}$ is on the left to $\eta_{m}$ for $2\le\ell\le N$.
\item
Denote by  $w_{N-M+1}:=\infty$. By the same proof as~\cite[Lemma 15]{DubedatSLEDuality}, for $j\ge 0$ with $2j\le N$, we have that $\eta_{N-2j}$ ends at $w_{j+1}$ almost surely. Moreover, for $1\le j\le N$, the random curve $\eta_j$ hits $\R$ only at its two ends almost surely.
\item
For $j\ge 0$ and $2j+1\le N$, given $\cup_{i\neq N-(2j+1)}\eta_i$, if we denote by $\Omega_{N-(2j+1)}$ the connected component of $\HH\setminus \cup_{i\neq N-(2j+1)}\eta_{N-i}$ which contains $x_{N-(2j+1)}$, the conditional law of $\eta_{N-(2j+1)}$ equals $\SLE_\kappa\left(\frac{\kappa-6}{2},\frac{\kappa-6}{2}\right)$ starting from $x_{N-(2j+1)}$ with force points $w_{j+1}$ on its right and $w_{j+2}$ on its left. By~\cite[Lemma 4.3]{MillerSheffieldIG1}, for $j\ge 0$ with $2j+1\le N$, we have that $\eta_{N-(2j+1)}$ ends at $(w_{j+1},w_{j+2})$ almost surely.
\end{itemize}

\subsection{$\beta$-Jacobi ensemble}
\label{sec::Jacobi}
Fix $N\ge 1$, $\beta>0$ and $a,b>-1$. The $\beta$-Jacobi ensemble with parameters $N,\beta,a,b$ describes $N$ real particles $(y_1,\ldots, y_N)$ distributed according to the joint probability density function (with respect to Lebesgue measure)
\[\rho(y_1,\ldots,y_N)=\frac{1}{Z_{J}}\one_{\{0<y_1<\ldots<y_N<1\}}\prod_{1\le i<j\le N}|y_j-y_i|^{\beta}\prod_{1\le j\le N}x_j^a(1-x_j)^b,\]
where the normalisation $Z_J$ is defined by 
\[Z_J=\int_{\{0<y_1<\ldots<y_N<1\}}\prod_{1\le i<j\le N}|y_j-y_i|^{\beta}\prod_{1\le j\le N}y_j^a(1-y_j)^b \prod_{1\le j\le N}\ud y_j.\]
In~\cite{RImatrix}, the authors introduced a type of tri-diagonal matrix model whose eigenvalues are distributed as Jacobi$(N;\beta;a,b)$. 

\subsection{Ising model}
\label{sec::Ising}
In this section, we recall some facts of Ising model.  Suppose $D$ is a discrete domain on $\Z^2$, denote by $\sigma\sim\PP_D^{\xi}$ the  critical Ising model on $D$ with boundary condition $\xi$. It satisfies the following two basic properties.
\begin{itemize} 
\item
Domain Markov property: Suppose $\hat D\subset D$ is a subdomain, then we have
\[\PP_D^{\xi}[\cdot\cond \sigma |_{D\setminus\hat D}]=\PP_{\hat D}^{\xi\cup\sigma |_{D\setminus\hat D}}[\cdot].\]  
\item
Monotonicity with respect boundary conditions: Suppose $A$ is an increasing event, i.e. if $\sigma\in A$ and $\sigma\le\sigma'$, we have that $\sigma'\in A$. Suppose $\xi\le\xi'$,  then we have
\[\PP_D^{\xi}[A]\le \PP_D^{\xi'}[A].\]
\end{itemize}
In this paper, we will use the following strong RSW estimate. Suppose $(D;a,b,c,d)$ is a discrete topological rectangle on $\Z^2$, denote by $d_D((ab),(cd))$ the discrete extremal distance between $(ab)$ and $(cd)$. By~\cite[Proposition 6.2]{ChelkakRobustComplexAnalysis}, the discrete extremal distance is uniformly comparable with respect to the extremal distance in the continuous setting. 
\begin{lemma}\label{lem::RSW}\cite[Corollary 1.7]{CDCHcrossing}
There exists $\eta=\eta(L)>0$, such that for every discrete topological rectangle $(D;a,b,c,d)$ with $d_D((ab),(cd))\le L$, we have that
\[\PP_D^{\xi}[\text{ there is a crossing of }-1 \text{ spins connecting }(ab)\text{ and }(cd)]\ge \eta,\]
where we denote by $\xi$ the following boundary condition:  it is free along $(ab)$ and $(cd)$ and it equals $+1$ along $(bc)$ and $(da)$.
\end{lemma}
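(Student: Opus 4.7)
The plan is to recognise Lemma~\ref{lem::RSW} as the standard strong RSW-type crossing estimate for the critical Ising model and therefore to follow the approach of Chelkak--Duminil-Copin--Hongler, translating the question about $-1$ spin crossings into a question about FK-Ising clusters and then invoking crossing estimates that are uniform in the boundary conditions. First I would use the Edwards--Sokal coupling of the critical Ising model on $D$ with the critical FK-Ising model on the same graph (with the boundary condition $\xi$ translated to the natural free/wired data on the corresponding arcs): the event that there is no $-1$ crossing between $(ab)$ and $(cd)$ forces a dual $+1$-spin structure, which in FK terms corresponds to a dual-FK crossing connecting $(bc)$ and $(da)$.

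Next I would exploit monotonicity with respect to boundary conditions: the prescribed $\xi$ (free on $(ab)\cup(cd)$, $+1$ on $(bc)\cup(da)$) is the extremal configuration for suppressing $-1$ crossings, so it suffices to bound the $-1$-crossing probability from below under this specific $\xi$. Using the FKG inequality and the self-duality of FK-Ising at the critical point, the question reduces to showing that under the corresponding FK boundary data (free on $(ab)\cup(cd)$, wired on $(bc)\cup(da)$), the probability of a primal FK crossing from $(ab)$ to $(cd)$ is bounded below by some $\eta(L)>0$ whenever $d_D((ab),(cd))\le L$.

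This last bound is exactly the content of the strong RSW estimate for FK-Ising, proved for instance in the paper of Chelkak--Duminil-Copin--Hongler cited in the statement. The argument there combines: (i) a box-crossing estimate at criticality in square annuli with mixed boundary conditions (obtained from the parafermionic observable and discrete holomorphicity); (ii) conditioning and gluing of crossings across overlapping rectangles via FKG; and (iii) an induction on the aspect ratio controlling the extremal distance. The main technical obstacle, which is handled in that reference rather than here, is step (i): obtaining crossing bounds that are uniform in the mesh $\delta$ and insensitive to the boundary conditions, which requires the full discrete complex-analytic machinery of s-holomorphic observables.

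Finally, to transfer the RSW bound to our setting I would invoke the uniform comparability of discrete extremal distance with continuous extremal distance proved in~\cite[Proposition~6.2]{ChelkakRobustComplexAnalysis}: the hypothesis $d_D((ab),(cd))\le L$ controls the conformal modulus of $(D;a,b,c,d)$ up to constants, so the same $\eta$ works for all discretisations, which is exactly the content of the lemma. No further work is needed beyond citing the result.
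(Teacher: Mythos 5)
Your proposal is correct and matches the paper exactly: the paper offers no proof of this lemma, stating it as a direct citation of \cite[Corollary 1.7]{CDCHcrossing}, which is precisely the strong RSW-type crossing estimate (uniform in boundary conditions and controlled by the discrete extremal distance) that you identify. Your sketch of the reduction from $-1$ spin crossings to FK-Ising crossings via the Edwards--Sokal coupling and the subsequent appeal to the uniform crossing bounds is the content of that reference, so nothing further is needed here.
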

\subsection{Notations}
\label{sec::not}
For $z\in\C$ and $r>0$, denote by $B(z,r):=\{w\in\C: |w-z|=r\}$.

Suppose $\Omega$ is a simply connected domain on $\overline\HH$ and $(-\infty,x)\subset\partial\Omega$. Denote by $x^-$ the prime end determined by the curve $(-\infty,x)$. For $x,y\in\partial\Omega$, we denote by $(xy)$ the arc on $\partial\Omega$ from $x$ to $y$ counterclockweisely.

\section{Proof of Theorem~\ref{thm::multiple_odd}--Corollary~\ref{coro::hitting_law}}
\label{sec::general_ensemble}
In this section, we will prove Theorem~\ref{thm::multiple_odd}--Corollary~\ref{coro::hitting_law}.

Fix $N\ge 1$ and fix $x_1<\ldots<x_{N+1}$. We first derive cascade relations for $(\ell_1,\ldots,\ell_N)\sim\PP_N$ and $(\eta_1,\ldots,\eta_N)\sim\QQ_N$ in Lemma~\ref{lem::cascade_eta} and Lemma~\ref{lem::cascade_gamma}. For a simple curve $\ell$ starting from $x_N$, denote by $(W^\ell_t:t\ge 0)$ the driving function and by $(g^\ell_t:t\ge 0)$ the corresponding conformal maps. Denote by $\tau$ the hitting time of curves at $(x_{N+1},+\infty)$. For every $\eps>0$, denote by $\tau_\eps$ the hitting time of the union of $\partial B\left(x_N,\frac{1}{\eps}\right)$ and the $\eps$-neighbourhood of $(x_{N+1},+\infty)$.
\begin{lemma}\label{lem::cascade_eta}
Denote by $\ell$ the $\SLE_\kappa\left(2,\ldots,2;\frac{\kappa-6}{2}\right)$ curve from $x_N$ to $\infty$, with force points given by $x_1^L=x_{N-1},\ldots, x_{N-1}^L=x_1$ and $x_1^R=x_{N+1}$. 
Then, we have the following properties.
\begin{itemize}
\item
The process $\{\mathcal Z_N(g^\ell_{t}(x_1),\ldots, g^\ell_{t}(x_{N-1}), W^\ell_{t}; g^\ell_{t}(x_{N+1}))\}_{t\ge 0}$ is a local martingale for $\ell$. Moreover, for every $\eps>0$, the law of $\ell_N[0,\tau_\eps]$ equals the law of $\ell[0,\tau_\eps]$ weighted by
\begin{equation}\label{eqn::weight_elleps}
\frac{\mathcal Z_N(g^\ell_{\tau_\eps}(x_1),\ldots, g^\ell_{\tau_\eps}(x_{N-1}), W^\ell_{\tau_\eps}; g^\ell_{\tau_\eps}(x_{N+1}))}{\mathcal{Z}_N(\bold x; x_{N+1})},
\end{equation}
\item
The law of $\ell_N[0,\tau]$ equals the law of $\ell[0,\tau]$ weighted by
\begin{equation}\label{eqn::weight_ell}
\frac{\mathcal{W}_{N-1}(g^\ell_{\tau}(x_1),\ldots,g^\ell_{\tau}(x_{N-1}); W^\ell_{\tau})}{\mathcal{Z}_N(\bold x; x_{N+1})},
\end{equation}
where we use the convention that $\mathcal{W}_0(\cdot; \cdot)=\mathcal{Z}_1(\cdot; \cdot)=1$.
\item
Given $\ell_N[0,\tau]$, the conditional law of $(g^{\ell_N}_{\tau}(\ell_1),\ldots, g^{\ell_N}_{\tau}(\ell_{N-1}))$ equals the law of random curves under $\QQ_{N-1}(g^\ell_{\tau}(x_1),\ldots,g^\ell_{\tau}(x_{N-1}); W^\ell_{\tau})$.
\end{itemize}
\end{lemma}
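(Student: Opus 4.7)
The plan is to exploit the conditional structure of $\PP_N$: given $\mathbf{z}$, by the Miller--Sheffield flow-line identity (first bullet after Theorem~\ref{thm::multiple_odd}), the conditional law of $\ell_N$ is that of an $\SLE_\kappa$ with additional right force points of weight $-4$ at $z_1,\ldots,z_M$, and the marginal of $\ell_N$ under $\PP_N$ is the $r$-average of these $\SLE$ laws.

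For parts 1--2, the conditional Radon--Nikodym derivative of the law of $\ell_N$ given $\mathbf{z}$ against the law of $\ell$ on $\mathcal F_{\tau_\epsilon}$ is the explicit local martingale obtained by inserting the $-4$ force points,
\[
M_t^{\mathbf{z}}=\frac{\Phi(g^\ell_t(\mathbf{x}),W^\ell_t,g^\ell_t(x_{N+1});\mathbf{z})}{\Phi(\mathbf{x},x_N,x_{N+1};\mathbf{z})},\quad \Phi(\mathbf{y},y_N,y_{N+1};\mathbf{z})=\prod_{i,j}(z_j-y_i)^{-\frac{4}{\kappa}}\prod_j(z_j-y_{N+1})^{\frac{6-\kappa}{\kappa}}\prod_{i<j}(z_j-z_i)^{\frac{8}{\kappa}},
\]
whose local-martingale property for the $\ell$-driving function is a standard It\^o calculation with the Loewner equation. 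Since $r(N;\mathbf{x},x_{N+1};\mathbf{z})$ is proportional to $\Phi(\mathbf{x},x_N,x_{N+1};\mathbf{z})$, averaging $M_{\tau_\epsilon}^{\mathbf{z}}$ over $\mathbf{z}\sim r$ yields exactly the ratio in \eqref{eqn::weight_elleps}. For \eqref{eqn::weight_ell} I pass $\epsilon\to 0$: on $\{\tau<\infty\}$, as $t\nearrow\tau$, $g^\ell_t(x_{N+1})-W^\ell_t\to 0$ (the right force point is absorbed by the tip at the hitting time), and inside the $\mathcal{Z}_N$-integrand
\[
(z_j-g^\ell_t(x_{N+1}))^{\frac{6-\kappa}{\kappa}}(z_j-W^\ell_t)^{-\frac{4}{\kappa}}\longrightarrow (z_j-W^\ell_\tau)^{\frac{2-\kappa}{\kappa}},
\]
which is precisely the $(w_j-y_{n+1})$-exponent in $\mathcal{W}_{N-1}$; since $[N/2]=(N-1)-[(N-1)/2]$ the number of integration variables also matches, and dominated convergence gives \eqref{eqn::weight_ell}.

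For part 3, two ingredients combine. First, given $\ell_N[0,\tau]$ and $\mathbf{z}$, the remaining curves $\ell_1,\ldots,\ell_{N-1}$ are flow lines of $\Gamma$ restricted to the unbounded component of $\HH\setminus \ell_N[0,\tau]$; applying the Miller--Sheffield boundary identity to $\ell_N$ and transporting by $g^{\ell_N}_\tau$ (with the usual winding correction), the resulting boundary data matches that of $\hat\Gamma$ in Theorem~\ref{thm::multiple_even} for $N-1$ starting points, opening $W^{\ell_N}_\tau$, and force points $g^{\ell_N}_\tau(z_j)$. Second, by Bayes's rule applied to the joint density of $(\mathbf{z},\ell_N[0,\tau])$, the conditional density of $\mathbf{z}$ given $\ell_N[0,\tau]$ is proportional to $r(\mathbf{z})\cdot M_\tau^{\mathbf{z}}$; the change of variables $z_j\mapsto g^{\ell_N}_\tau(z_j)$, together with the SLE conformal covariance of $\Phi$ and the same absorption $\frac{6-\kappa}{\kappa}\to \frac{2-\kappa}{\kappa}$ from Part 2, turns this into $\rho(N-1;g^{\ell_N}_\tau(\mathbf{x}),W^{\ell_N}_\tau;g^{\ell_N}_\tau(\mathbf{z}))$, the sampling density of the $w$-points under $\QQ_{N-1}$. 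The main obstacle is the simultaneous bookkeeping in this last step: the Jacobian of $g^{\ell_N}_\tau$, the conformal-covariance exponents of $\Phi$, and the Dyson absorption at $\tau$ must conspire to exchange $(r,\mathcal{Z}_N)$ for $(\rho,\mathcal{W}_{N-1})$, and this conspiracy is exactly what dictates the specific exponents in \eqref{eqn::density_case1} and \eqref{eqn::density_case2}.
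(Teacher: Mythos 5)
Your proposal follows essentially the same route as the paper: condition on $\bold z$, identify the conditional Radon--Nikodym derivative of $\ell_N$ against $\ell$ as the coordinate-change local martingale obtained by inserting force points of weight $-4$ at the $z_j$, average over $\bold z\sim r$ to get \eqref{eqn::weight_elleps}, let $\eps\to 0$ using the exponent absorption $\frac{6-\kappa}{\kappa}-\frac{4}{\kappa}=\frac{2-\kappa}{\kappa}$ together with a domination argument to get \eqref{eqn::weight_ell}, and obtain the third item from the conditional density of the $z_j$'s given $\ell_N[0,\tau]$ combined with the flow-line coupling. One small correction: as displayed, your $M_t^{\bold z}$ leaves the $z_j$ untransformed and drops the conformal factors; the correct martingale evaluates the interaction terms at $g^\ell_t(z_j)$ and carries $\prod_j\bigl(g^\ell_t\bigr)'(z_j)$ (the weight $-4$ has conformal dimension $1$), and it is exactly this Jacobian that lets the $r$-average reproduce $\mathcal Z_N$ at the time-$t$ points.
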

\begin{proof}
We only need to consider the case that $N\ge 2$. Define
\begin{align*}
U_t:=&\prod_{\substack{1\le i\le N-1\\1\le j\le M}}(g^\ell_t(z_j)-g^{\ell}_t(x_i))^{-\frac{4}{\kappa}}\prod_{1\le j\le M}(g_t^\ell(z_j)-W^\ell_t)^{-\frac{4}{\kappa}}\prod_{1\le j\le M}(g^\ell_t(z_j)-g^\ell_t(x_{N+1}))^{\frac{6-\kappa}{\kappa}}\\
&\times\prod_{1\le i<j\le M}(g_t^\ell(z_j)-g_t^\ell(z_i))^{\frac{8}{\kappa}}\prod_{1\le j\le M}\left(g_t^\ell\right)'(z_j).
\end{align*}
Note that there exists $C=C(\eps,z_1,\ldots,z_M)$ such that
\[0\le U_{t\wedge\tau_\eps}\le C.\]
Combining with~\cite[Theorem 6]{SchrammWilsonSLECoordinatechanges}, we have that $\{U_{t\wedge\tau_\eps}\}_{t\ge 0}$ is a martingale for $\ell$. Moreover, the law of $\ell[0,\tau_\eps]$ weighted by $\left\{\frac{U_{t\wedge\tau_\eps}}{U_0}\right\}_{t\ge 0}$ equals the conditional law of $\ell_N[0,\tau_\eps]$ given $z_1,\ldots,z_M$. 
Thus, for every bounded and continuous function $g$ on the curve space, and bounded and continuous function $f$, we have
\begin{align}
&\E[g(\ell_N[0,\tau_\eps])f(g^{\ell_N}_{\tau_\eps}(z_1),\ldots, g^{\ell_N}_{\tau_\eps}(z_M))]\notag\\
=&\E[\E[g(\ell_N[0,\tau_\eps])f(g^{\ell_N}_{\tau_\eps}(z_1),\ldots, g^{\ell_N}_{\tau_\eps}(z_M))\cond z_1,\ldots,z_M]]\notag\\
=&\int_{\{x_{N+1}<z_1\cdots<z_M\}}\E\left[g(\ell[0,\tau_\eps])f(g^{\ell}_{\tau_\eps}(z_1),\ldots, g^{\ell}_{\tau_\eps}(z_M))\frac{U_{\tau_\eps}}{U_0}\right]r(N;\bold x,x_{N+1};\bold z)\prod_{1\le j\le M}\ud z_j\notag\\
=&\frac{1}{\mathcal{Z}_N(\bold x;x_{N+1})}\E\left[g(\ell[0,\tau_\eps])U(f,N;g^\ell_{\tau_\eps}(x_1),\ldots, g^\ell_{\tau_\eps}(x_{N-1}), W^\ell_{\tau_\eps},g^\ell_{\tau_\eps}(u))\right],\label{eqn::truncated_mart_eta}
\end{align}
where we define 
\begin{align}\label{eqn::mart_eta}
&U(f,N;g^\ell_{t}(x_1),\ldots, g^\ell_{t}(x_{N-1}), W^\ell_{t},g^\ell_{t}(x_{N+1}))\notag\\
:=&\int_{\{g^\ell_{t}(x_{N+1})<u_1\cdots<u_M\}}\prod_{1\le i<j\le M}(u_j-u_i)^{\frac{8}{\kappa}}\prod_{\substack{1\le i\le N-1\\1\le j\le M}}(u_j-g^{\ell}_t(x_i))^{-\frac{4}{\kappa}}\prod_{1\le j\le M}(u_j-W^\ell_t)^{-\frac{4}{\kappa}}(u_j-g^\ell_t(x_{N+1}))^{\frac{6-\kappa}{\kappa}}\ud u_j\notag\\
&\times f(u_1,\ldots,u_M).
\end{align}
When $f=1$, by definition, we have that 
\[U(f,N;g^\ell_{t}(x_1),\ldots, g^\ell_{t}(x_{N-1}), W^\ell_{t},g^\ell_{t}(x_{N+1}))=\mathcal Z_N(g^\ell_{t}(x_1),\ldots, g^\ell_{t}(x_{N-1}), W^\ell_{t},g^\ell_{t}(x_{N+1})).\]
This gives the first item.

Next, we will prove that $\{U(f,N;g^\ell_{t}(x_1),\ldots, g^\ell_{t}(x_{N-1}), W^\ell_{t},g^\ell_{t}(x_{N+1}))\}_{0\le t\le \tau}$ is uniformly bounded and derive the terminal value. For $1\le i\le N-1$ and $1\le j\le M$, we have
\[u_j-W^\ell_{t}\ge u_j-g^\ell_{t}(x_{N+1})\quad\text{and}\quad u_j-g^{\ell}_t(x_i)\ge u_j-g^{\ell}_t(x_{N-1}).\]
Then, we have
\begin{align}\label{eqn::boud_intd}
&\prod_{\substack{1\le i\le N-1\\1\le j\le M}}(u_j-g^{\ell}_t(x_i))^{-\frac{4}{\kappa}}\prod_{1\le j\le M}(u_j-W^\ell_t)^{-\frac{4}{\kappa}}\notag\\
\le &\prod_{1\le j\le M}(u_j-g^{\ell}_t(x_{N-1}))^{-\frac{4}{\kappa}(N-1)}\prod_{1\le j\le M}(u_j-g^\ell_t(x_{N+1}))^{-\frac{4}{\kappa}}.
\end{align}
Note that~\eqref{eqn::boud_intd} implies that
\begin{align}\label{eqn::bound_U_1}
&\mathcal Z_N(g^\ell_{t}(x_1),\ldots, g^\ell_{t}(x_{N-1}), W^\ell_{t},g^\ell_{t}(x_{N+1}))\notag\\
\le&\int_{\{g^\ell_{t}(x_{N+1})<u_1\cdots<u_M\}}\prod_{1\le i<j\le M}(u_j-u_i)^{\frac{8}{\kappa}}\prod_{1\le j\le M}(u_j-g^{\ell}_{\tau_\eps}(x_{N-1}))^{-\frac{4}{\kappa}(N-1)}\prod_{1\le j\le M}(u_j-g^\ell_{\tau_\eps}(x_{N+1}))^{\frac{2-\kappa}{\kappa}}\ud u_j\notag\\
= &(g^\ell_{\tau_\eps}(x_{N+1})-g^\ell_{\tau_\eps}(x_{N-1}))^{\frac{4}{\kappa}M(M-N+\frac{1}{2})}\int_{0<u_1<\cdots<u_M<1}\prod_{1\le i<j\le M}(u_j-u_i)^{\frac{8}{\kappa}}\prod_{1\le j\le M}u_j^{\frac{4N-8M+2-\kappa}{\kappa}}(1-u_j)^{\frac{2-\kappa}{\kappa}}\ud u_j.
\end{align}
Note that
\[M-N+\frac{1}{2}<0\quad\text{and}\quad g^\ell_{\tau_\eps}(x_{N+1})-g^\ell_{\tau_\eps}(x_{N-1})\ge x_{N+1}-x_{N-1}.\]
Thus, by~\eqref{eqn::bound_U_1}, there exists a constant $C_3>0$ which is independent of $\eps$,  such that
\begin{equation}\label{eqn::bound_U_2}
U(f,N;g^\ell_{\tau_\eps}(x_1),\ldots,g^\ell_{\tau_\eps}(x_{N-1}),W^\ell_{\tau_\eps},g^\ell_{\tau_\eps}(u))\le C_3\max f.
\end{equation}
Moreover, by~\eqref{eqn::boud_intd},~\eqref{eqn::bound_U_1} and dominated convergence theorem,  we have that
\begin{align}\label{eqn::limit_U}
&\lim_{\eps\to 0}U(f,N;g^\ell_{\tau_\eps}(x_1),\ldots,g^\ell_{\tau_\eps}(x_{N-1}),W^\ell_{\tau_\eps},g^\ell_{\tau_\eps}(u))\notag\\
=&\mathcal{W}_{N-1}(g^\ell_{\tau}(x_1),\ldots,g^\ell_{\tau}(x_{N-1}); W^\ell_{\tau})\notag\\
&\times\int_{\{W^\ell_{\tau}<u_1<\cdots<u_{M}\}}f(u_1,\ldots,u_{M})\rho(N-1;g^\ell_{\tau}(x_1),\ldots,g^\ell_{\tau}(x_{N-1}),W^\ell_{\tau};\bold u)\prod_{1\le j\le M}\ud u_j\notag\\
:=&U(f,N;g^\ell_{\tau}(x_1),\ldots,g^\ell_{\tau}(x_{N-1}),W^\ell_{\tau}).
\end{align}
Combining with~\eqref{eqn::truncated_mart_eta} and~\eqref{eqn::limit_U},  letting $\eps\to 0$, by dominated convergence theorem, we have
\begin{align}
&\E[g(\ell_N[0,\tau])f(g^{\ell_N}_{\tau}(z_1),\ldots, g^{\ell_N}_{\tau}(z_M))]=\frac{1}{\mathcal{Z}_N(\bold x; x_{N+1})}\E\left[g(\ell[0,\tau])U(f,N;g^\ell_{\tau}(x_1),\ldots,g^\ell_{\tau}(x_{N-1}),W^\ell_{\tau})\right].\label{eqn::mart_eta}
\end{align}
Taking $f=1$, we obtain~\eqref{eqn::weight_ell}.  This gives the second item.

Note that~\eqref{eqn::mart_eta}  implies that the density of the conditional law of $(g^{\ell_N}_{\tau}(z_1),\ldots, g^{\ell_N}_{\tau}(z_M))$ given $\ell_N[0,\tau]$ is given by 
$\rho(N-1;g^{\ell_N}_{\tau}(x_1),\ldots, g^{\ell_N}_{\tau}(x_{N-1}), W^{\ell_1}_{\tau};\cdot,\ldots,\cdot)$. Combining with the property of coupling of flow lines, we have that the conditional law of $(g^{\ell_N}_{\tau}(\ell_1),\ldots, g^{\ell_N}_{\tau}(\ell_{N-1}))$ equals the law of random curves under $\QQ_{N-1}(g^{\ell_N}_{\tau}(x_1),\ldots,g^{\ell_N}_{\tau}(x_{N-1}); W^{\ell_N}_{\tau})$. This gives the third item and completes the proof.

\end{proof}
\begin{lemma}\label{lem::cascade_gamma}
Denote by $\ell$ the $\SLE_\kappa\left(2,\ldots,2;\frac{\kappa-6}{2}\right)$ curve from $x_N$ to $\infty$, with force points given by $x_1^L=x_{N-1},\ldots, x_{N-1}^L=x_1$ and $x_1^R=x_{N+1}$.  
Then, we have the following properties.
\begin{itemize}
\item
Define
\[S_t:=\prod_{1\le i\le N-1}(g_t^\ell(x_{N+1})-g_t^\ell(x_i))^{\frac{2}{\kappa}}\times(g_t^\ell(x_{N+1})-W_t^\ell)^{\frac{2}{\kappa}}\times \mathcal W_N(g^\ell_{t}(x_1),\ldots,g^\ell_{t}(x_{N-1}),W^\ell_{t}; g^\ell_{t}(x_{N+1})).\]
Then, the process $\{S_t\}_{t\ge 0}$ is a local martingale for $\ell$. Moreover, for every $\eps>0$, the law of $\eta_N[0,\tau_\eps]$ equals the law of $\ell[0,\tau_\eps]$ weighted by $\frac{S_{\tau_\eps}}{S_0}$.
\item
The law of $\eta_N[0,\tau]$ equals the law of $\ell[0,\tau]$ weighted by
\begin{equation}\label{eqn::weight_eta}
B\left(\frac{2}{\kappa},\frac{2}{\kappa}\right)\times\frac{\prod_{1\le i\le N-1}(g_\tau^\ell(x_{N+1})-g_\tau^\ell(x_i))^{-\frac{2}{\kappa}}\mathcal{Z}_{N-1}(g_\tau^\ell(x_1),\ldots,g_\tau^\ell(x_{N-1}); W^\ell_{\tau})}{\prod_{1\le i\le N-1}(x_{N+1}-x_i)^{\frac{2}{\kappa}}\mathcal{W}_N(\bold x; x_{N+1})},
\end{equation}
where $B(\cdot,\cdot)$ is the Beta function and we use the convention that $\mathcal{Z}_0(\cdot;\cdot)=\mathcal{Z}_1(\cdot;\cdot)=1$.

\item
Given $\eta_N$, the conditional law of $(g^{\eta_N}_{\tau}(\eta_1),\ldots, g^{\eta_N}_{\tau}(\eta_{N-1}))$ equals the law of random curves under $\PP_{N-1}(g^{\eta_N}_{\tau}(x_1),\ldots,g^{\eta_N}(x_{N-1}); W^{\eta_N}_{\tau})$.
\end{itemize}
\end{lemma}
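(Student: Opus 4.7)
The plan is to imitate the proof of Lemma~\ref{lem::cascade_eta}, with the exponent $\frac{6-\kappa}{\kappa}$ replaced by $\frac{2-\kappa}{\kappa}$ throughout. The key new feature is that $\mathcal{W}_N$ develops a singularity as the argument $W^\ell_t$ merges with $g^\ell_t(x_{N+1})$ at time $\tau$. This singularity is exactly compensated by the factor $(g^\ell_t(x_{N+1})-W^\ell_t)^{2/\kappa}$ in $S_t$, and the Beta function $B(2/\kappa,2/\kappa)$ emerges from the residual finite integral.

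For the first item, I would fix $\bold w=(w_1,\ldots,w_{N-M})$ and define $V_t$ by replacing each $w_j$ in the integrand of $\mathcal{W}_N$ by $g^\ell_t(w_j)$ and inserting the Jacobian $\prod_j (g^\ell_t)'(w_j)$. By~\cite[Theorem 6]{SchrammWilsonSLECoordinatechanges}, $V_t$ is a bounded local martingale on $[0,\tau_\eps]$ for $\ell$, and weighting the law of $\ell[0,\tau_\eps]$ by $V_{\tau_\eps}/V_0$ produces the conditional law of $\eta_N[0,\tau_\eps]$ given $\bold w$. Integrating over $\bold w$ against the density $\rho(N;\bold x,x_{N+1};\bold w)$ and applying Fubini then yields both the local martingale property of $S_t$ and the weighting identity of the first item.

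To pass to $\eps\to 0$ I isolate the singularity of $\mathcal{W}_N$ as the arguments $W^\ell_t$ and $g^\ell_t(x_{N+1})$ collide. Writing $\delta_t:=g^\ell_t(x_{N+1})-W^\ell_t$, I change variables in the innermost integration via
\[
w_1=g^\ell_t(x_{N+1})+\delta_t\,\frac{\sigma}{1-\sigma},\qquad \sigma\in(0,1),
\]
so that $w_1-W^\ell_t=\delta_t/(1-\sigma)$, $w_1-g^\ell_t(x_{N+1})=\delta_t\sigma/(1-\sigma)$, and $\ud w_1=\delta_t(1-\sigma)^{-2}\ud\sigma$. A direct count shows that the $w_1$-contribution equals $\delta_t^{-2/\kappa}\sigma^{(2-\kappa)/\kappa}(1-\sigma)^{(2-\kappa)/\kappa}\ud\sigma$, whose $\sigma$-integral is exactly $B(2/\kappa,2/\kappa)$. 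Letting $w_1\to W^\ell_\tau$ in the remaining factors, the exponents at the merged point recombine as $-\frac{4}{\kappa}+\frac{2-\kappa}{\kappa}+\frac{8}{\kappa}=\frac{6-\kappa}{\kappa}$, so the residual integral over $(w_2,\ldots,w_{N-M})$ is recognised as $\mathcal{Z}_{N-1}(g^\ell_\tau(x_1),\ldots,g^\ell_\tau(x_{N-1});W^\ell_\tau)$. The factor $(g^\ell_t(x_{N+1})-W^\ell_t)^{2/\kappa}$ in $S_t$ cancels the $\delta_t^{-2/\kappa}$ divergence, and combining with the remaining prefactors in $S_t$ and dividing by $S_0$ gives the weight in the second item. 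Exchange of limit and expectation along $\tau_\eps\uparrow\tau$ is justified by a uniform upper bound analogous to~\eqref{eqn::boud_intd}--\eqref{eqn::bound_U_2}, using $w_j-W^\ell_t\ge w_j-g^\ell_t(x_{N+1})$ and $w_j-g^\ell_t(x_i)\ge w_j-g^\ell_t(x_{N-1})$.

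For the third item, inserting a bounded continuous test function $f(\bold w)$ into the Fubini calculation of the first step and passing to the limit as above identifies the conditional density of the hitting points of $\eta_1,\ldots,\eta_{N-1}$ with $r(N-1;g^{\eta_N}_\tau(x_1),\ldots,g^{\eta_N}_\tau(x_{N-1});W^{\eta_N}_\tau;\cdot)$, which together with the GFF flow-line coupling identifies the conditional law of $(g^{\eta_N}_\tau(\eta_1),\ldots,g^{\eta_N}_\tau(\eta_{N-1}))$ with $\PP_{N-1}(g^{\eta_N}_\tau(x_1),\ldots,g^{\eta_N}_\tau(x_{N-1});W^{\eta_N}_\tau)$. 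The main obstacle is the asymptotic step: isolating the correct $\delta_t^{-2/\kappa}$ leading order of $\mathcal{W}_N$ with the precise Beta-function constant, together with providing an $\eps$-uniform dominating function so that dominated convergence applies.
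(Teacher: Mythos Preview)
Your overall strategy matches the paper's, but there is a genuine gap in the first item. You define $V_t$ as the integrand of $\mathcal{W}_N$ pushed forward by $g^\ell_t$ together with the Jacobian. This is \emph{not} a local martingale for $\ell$. The reason is that $\ell$ carries force-point weight $\frac{\kappa-6}{2}$ at $x_{N+1}$, so by~\cite{SchrammWilsonSLECoordinatechanges} the cross-term exponent between a new force point at $w_j$ (weight $-4$) and $x_{N+1}$ must be $\frac{6-\kappa}{\kappa}$, whereas the integrand of $\mathcal{W}_N$ has $\frac{2-\kappa}{\kappa}$ there. Equivalently, the conditional law of $\eta_N$ given $\bold w$ is $\SLE_\kappa(2,\ldots,2;\frac{\kappa-2}{2},-4,\ldots,-4)$, and the Radon--Nikodym derivative taking $\ell$ to this law must first shift the weight at $x_{N+1}$ from $\frac{\kappa-6}{2}$ to $\frac{\kappa-2}{2}$. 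This shift contributes precisely the extra factor
\[
\prod_{1\le i\le N-1}(g^\ell_t(x_{N+1})-g^\ell_t(x_i))^{2/\kappa}\,(g^\ell_t(x_{N+1})-W^\ell_t)^{2/\kappa},
\]
which is why it appears in $S_t$ but is absent from your $V_t$. Without it your $V_t$ is not a martingale and integrating it over $\bold w$ yields $\mathcal{W}_N(\cdots)$ rather than $S_t$, so neither conclusion of the first item follows. The paper defines $V_t$ \emph{with} this prefactor included from the start.

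Once this is corrected, your approach to the second item---extracting the $\delta_t^{-2/\kappa}$ singularity via the substitution $w_1=g^\ell_t(x_{N+1})+\delta_t\sigma/(1-\sigma)$ to produce $B(2/\kappa,2/\kappa)$---is essentially equivalent to the paper's, which instead bounds the $u_1$-integral by extending its upper limit to $\infty$ and evaluating directly (the paper also separates out the elementary case $N=1$). Your treatment of the third item agrees with the paper's.
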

\begin{proof}
When $N=1$, note that 
\begin{equation}\label{eqn::N=1}
\mathcal{W}(1;x_1,x_2)=\int_{\{x_2<w_1\}}(w_1-x_1)^{-\frac{4}{\kappa}}(w_1-x_2)^{\frac{2-\kappa}{\kappa}}\ud w_1=B\left(\frac{2}{\kappa},\frac{2}{\kappa}\right)\times(x_2-x_1)^{-\frac{2}{\kappa}}.
\end{equation}
Define
\[R_t:=\frac{(g^{\ell}_t(w_1)-W^{\ell}_t)^{-\frac{4}{\kappa}}(g^{\ell}_t(x_2)-W^{\ell}_t)^{\frac{\kappa-2}{2\kappa}}(g^{\ell}_t(w_1)-g^{\ell}_t(x_2))^{\frac{2-\kappa}{\kappa}}\left(g^{\ell}_t\right)'(w_1)}{(g^{\ell}_t(x_2)-W^{\ell}_t)^{\frac{\kappa-6}{2}}}.\]

 Note that there exists a constant $C=C(\eps,w_1)>0$ such that
\[0\le R_{t\wedge\tau_\eps}\le C.\]
Combining with~\cite[Theorem 6]{SchrammWilsonSLECoordinatechanges}, we have that $\{R_{t\wedge\tau_\eps}\}_{t\ge 0}$ is a martingale for $\eta_1$. Moreover, the law of $\ell[0,\tau_\eps]$ weighted by $\{R_{t\wedge\tau_\eps}\}_{t\ge 0}$ equals the conditional law of $\eta_1[0,\tau_\eps]$ given $w_1$. 
Thus, for every bounded and continuous function $g$ on the curve space, we have
\begin{align*}
\E[g(\eta_1[0,\tau_\eps])]=\E[\E[g(\gamma_1[0,\tau_\eps])]\cond w_1]=\int_{\{y<w_1\}}\rho(1;x_1,x_2;w_1)\E\left[g(\ell[0,\tau_\eps])\frac{R_{\tau_\eps}}{R_0}\right]=\E[g(\ell[0,\tau_\eps])].
\end{align*}
By letting $\eps\to 0$, we complete the proof of Lemma~\ref{lem::cascade_gamma} when $N=1$.

The proof when $N\ge 2$ is similar to the proof of Lemma~\ref{lem::cascade_eta}.
Define
\begin{align*}
V_t:=&\prod_{\substack{1\le i\le N-1\\1\le j\le N-M}}(g^\ell_t(w_j)-g^{\ell}_t(x_i))^{-\frac{4}{\kappa}}\prod_{1\le j\le N-M}(g_t^\ell(w_j)-W^\ell_t)^{-\frac{4}{\kappa}}\prod_{1\le j\le N-M}(g^\ell_t(w_j)-g^\ell_t(x_{N+1}))^{\frac{2-\kappa}{\kappa}}\\
&\times\prod_{1\le i\le N-1}(g_t^\ell(x_{N+1})-g_t^\ell(x_i))^{\frac{2}{\kappa}}\times(g_t^\ell(x_{N+1})-W_t^\ell)^{\frac{2}{\kappa}}\prod_{1\le i<j\le N-M}(g_t^\ell(w_j)-g_t^\ell(w_i))^{\frac{8}{\kappa}}\prod_{1\le j\le N-M}\left(g_t^\ell\right)'(w_j).
\end{align*}
Note that there exists $C=C(\eps,w_1,\ldots,w_{N-M})$ such that
\[0\le V_{t\wedge\tau_\eps}\le C.\]
Combining with~\cite[Theorem 6]{SchrammWilsonSLECoordinatechanges}, we have that $\{V_{t\wedge\tau_\eps}\}_{t\ge 0}$ is a martingale for $\ell$. 
Similarly to~\eqref{eqn::truncated_mart_eta}, for every bounded and continuous function $g$ on the curve space, and bounded and continuous function $f$, we have
\begin{align}
&\E[g(\eta_N[0,\tau_\eps])f(g^{\eta_N}_{\tau_\eps}(w_2),\ldots, g^{\eta_N}_{\tau_\eps}(w_{N-M}))]\notag\\
=&\frac{\prod_{1\le i\le N}(x_{N+1}-x_i)^{-\frac{2}{\kappa}}}{\mathcal{W}_N(\bs x; x_{N+1})}\E\left[g(\ell[0,\tau_\eps])V(f,N;g^\ell_{\tau_\eps}(x_1),\ldots,g^\ell_{\tau_\eps}(x_{N-1}),W^\ell_{\tau_\eps},g^\ell_{\tau_\eps}(x_{N+1}))\right],\label{eqn::truncated_mart_gamma}
\end{align}
where we define 
\begin{align}\label{eqn::mart_gamma}
&V(f,N;g^\ell_{t}(x_1),\ldots,g^\ell_{t}(x_{N-1}),W^\ell_{t},g^\ell_{t}(x_{N+1}))\notag\\
:=&\prod_{1\le i\le N-1}(g_t^\ell(x_{N+1})-g_t^\ell(x_i))^{\frac{2}{\kappa}}\times(g_t^\ell(x_{N+1})-W_t^\ell)^{\frac{2}{\kappa}}\notag\\
&\times\int_{\{g^\ell_{t}(x_{N+1})<u_1\cdots<u_{N-M}\}}\prod_{1\le i<j\le N-M}(u_j-u_i)^{\frac{8}{\kappa}}\prod_{\substack{1\le i\le N-1\\1\le j\le N-M}}(u_j-g^{\ell}_t(x_i))^{-\frac{4}{\kappa}}\notag\\
&\times f(u_2,\ldots,u_{N-M})\times \prod_{1\le j\le N-M}(u_j-W^\ell_t)^{-\frac{4}{\kappa}}(u_j-g^\ell_t(x_{N+1}))^{\frac{2-\kappa}{\kappa}}\ud u_j.
\end{align}
When $f=1$, by definition, we have that 
\[V(f,N;g^\ell_{t}(x_1),\ldots, g^\ell_{t}(x_{N-1}), W^\ell_{t},g^\ell_{t}(x_{N+1}))=S_t.\]
This gives the first item.

Next, we will prove that $\{V(f,N;g^\ell_{t}(x_1),\ldots, g^\ell_{t}(x_{N-1}), W^\ell_{t},g^\ell_{t}(x_{N+1}))\}_{0\le t\le \tau}$ is uniformly bounded and derive the terminal value.
For $1\le i\le N-1$ and $2\le j\le N-M$, we have
\[u_j-u_1\le u_j-g_t^\ell(x_{N+1}),\quad u_j-W_t^\ell\ge u_j-g_t^\ell(x_{N+1}),\quad  u_1-g_t^\ell(x_i)\ge g_t^\ell(x_{N+1})-g_t^\ell(x_{i})\]
and
\[u_j-g_t^\ell(x_i)\ge u_j-g_t^\ell(x_{N-1}).\]
Then, we have
\begin{align}\label{eqn::bound_intd_V}
&\prod_{2\le j\le N-M}(u_j-u_1)^{\frac{8}{\kappa}}\times \prod_{\substack{1\le i\le N-1\\1\le j\le N-M}}(u_j-g^{\ell}_t(x_i))^{-\frac{4}{\kappa}}\times\prod_{2\le j\le N-M}(u_j-W^\ell_t)^{-\frac{4}{\kappa}}(u_j-g^\ell_t(x_{N+1}))^{\frac{2-\kappa}{\kappa}}\notag\\
\le&\prod_{2\le j\le N-M}(u_j-g^{\ell}_t(x_{N-1}))^{-\frac{4}{\kappa}(N-1)}(u_j-g^\ell_t(x_{N+1}))^{\frac{6-\kappa}{\kappa}}\times\prod_{1\le i\le N-1}(g_t^\ell(x_{N+1})-g_t^\ell(x_{i}))^{-\frac{4}{\kappa}}.
\end{align}
Thus, similarly to~\eqref{eqn::bound_U_1}, by~\eqref{eqn::bound_intd_V}, we have
\begin{align}
&\int_{\{g^\ell_{t}(x_{N+1})<u_1\cdots<u_{N-M}\}}\prod_{1\le i<j\le N-M}(u_j-u_i)^{\frac{8}{\kappa}}\prod_{\substack{1\le i\le N-1\\1\le j\le N-M}}(u_j-g^{\ell}_t(x_i))^{-\frac{4}{\kappa}}\notag\\
&\times\prod_{1\le j\le N-M}(u_j-W^\ell_t)^{-\frac{4}{\kappa}}(u_j-g^\ell_t(x_{N+1}))^{\frac{2-\kappa}{\kappa}}\ud u_j\notag\\
\le&\prod_{1\le i\le N-1}(g_t^\ell(x_{N+1})-g_t^\ell(x_i))^{-\frac{4}{\kappa}}\notag\\
&\times\int_{\{g^\ell_{t}(x_{N+1})<u_2\cdots<u_{N-M}\}}\prod_{2\le i<j\le N-M}(u_j-u_i)^{\frac{8}{\kappa}}\prod_{2\le j\le N-M}(u_j-g^{\ell}_t(x_{N-1}))^{-\frac{4}{\kappa}(N-1)}(u_j-g^\ell_t(x_{N+1}))^{\frac{6-\kappa}{\kappa}}\ud u_j\notag\\
&\times\int_{\{g^\ell_{t}(x_{N+1})<u_1<u_2\}}(u_1-W^{\ell}_t)^{-\frac{4}{\kappa}}(u_1-g^\ell_t(x_{N+1}))^{\frac{2-\kappa}{\kappa}}\ud u_1\notag\\
\le&\prod_{1\le i\le N-1}(g_t^\ell(x_{N+1})-g_t^\ell(x_i))^{-\frac{4}{\kappa}}\notag\\
&\times\int_{\{g^\ell_{t}(x_{N+1})<u_2\cdots<u_{N-M}\}}\prod_{2\le i<j\le N-M}(u_j-u_i)^{\frac{8}{\kappa}}\prod_{2\le j\le N-M}(u_j-g^{\ell}_t(x_{N-1}))^{-\frac{4}{\kappa}(N-1)}(u_j-g^\ell_t(x_{N+1}))^{\frac{6-\kappa}{\kappa}}\ud u_j\notag\\
&\times\int_{\{g^\ell_{t}(x_{N+1})<u_1\}}(u_1-W^{\ell}_t)^{-\frac{4}{\kappa}}(u_1-g^\ell_t(x_{N+1}))^{\frac{2-\kappa}{\kappa}}\ud u_1\notag\\
=&\prod_{1\le i\le N-1}(g_t^\ell(x_{N+1})-g_t^\ell(x_i))^{-\frac{4}{\kappa}}\times (g_t^\ell(x_{N+1})-g_t^\ell(x_{N-1}))^{\frac{4}{\kappa}(N-M-1)\left(\frac{1}{2}-M\right)}\notag\\
&\times\int_{\{0<u_2\cdots<u_{N-M}<1\}}\prod_{2\le i<j\le n-M}(u_j-u_i)^{\frac{8}{\kappa}}\prod_{2\le j\le N-M}u_j^{\frac{8M-4N+6-\kappa}{\kappa}}(1-u_j)^{\frac{6-\kappa}{\kappa}}\ud u_j\notag\\
&\times B\left(\frac{2}{\kappa},\frac{2}{\kappa}\right)\times(g_t^\ell(x_{N+1})-W_t^\ell)^{-\frac{2}{\kappa}}, \label{eqn::bound_V_1}
\end{align}
where $B(\cdot,\cdot)$ is the Beta function.
Similarly to~\eqref{eqn::bound_U_2}, there exists a constant $C_4>0$ which is independent of $\eps$, such that
\begin{equation}\label{eqn::bound_V_2}
V(f,N;g^\ell_{\tau_\eps}(x_1),\ldots,g^\ell_{\tau_\eps}(x_{N-1}),W^\ell_{\tau_\eps},g^\ell_{\tau_\eps}(x_{N+1}))\le C_4\max f.
\end{equation}
Similarly to~\eqref{eqn::bound_V_1}, by~\eqref{eqn::bound_intd_V} and dominated convergence theorem, we have
\begin{align}\label{eqn::limit_V}
&\lim_{\eps\to 0}V(f,N;g^\ell_{\tau_\eps}(x_1),\ldots,g^\ell_{\tau_\eps}(x_{N-1}),W^\ell_{\tau_\eps},g^\ell_{\tau_\eps}(x_{N+1}))\notag\\
=&B\left(\frac{2}{\kappa},\frac{2}{\kappa}\right)\times\prod_{1\le i\le N-1}(g_{\tau}^\ell(x_{N+1})-g_\tau^\ell(x_i))^{-\frac{2}{\kappa}}\mathcal{Z}_{N-1}(g_\tau^\ell(x_1),\ldots,g_\tau^\ell(x_{N-1}); W^\ell_{\tau})\notag\\
&\times\int_{\{W^\ell_{\tau}<u_2<\cdots<u_{N-M}\}}f(u_2,\ldots,u_{N-M})r(N-1;g^\ell_{\tau}(x_1),\ldots,g^\ell_\tau(x_{N-1}),W^\ell_{\tau};u_2,\ldots,u_{N-M})\prod_{2\le j\le N-M}\ud u_j\notag\\
:=&V(f,N;g^\ell_{\tau}(x_1),\ldots,g^\ell_\tau(x_{N-1}),W^\ell_{\tau}).
\end{align}
Combining~\eqref{eqn::truncated_mart_gamma}--\eqref{eqn::limit_V} together,  letting $\eps\to 0$, by dominated convergence theorem, we have
\begin{align}
&\E[g(\eta_N[0,\tau])f(g^{\eta_N}_{\tau}(w_2),\ldots, g^{\eta_N}_{\tau}(w_{N-M}))]
\notag\\
=&\frac{\prod_{1\le i\le N-1}(x_{N+1}-x_i)^{-\frac{2}{\kappa}}}{\mathcal{W}(N;\bold x,x_{N+1})}\E\left[g(\ell[0,\tau])V(f,N;g^\ell_{\tau}(x_1),\ldots,g^\ell_\tau(x_{N-1}),W^\ell_{\tau})\right].\label{eqn::mart_eta}
\end{align}
Taking $f=1$, we obtain~\eqref{eqn::weight_eta}.  This gives the second item.

Note that~\eqref{eqn::mart_eta}  implies that the density of the conditional law of $(g^{\eta_N}_{\tau}(w_2),\ldots, g^{\eta_N}_{\tau}(w_{N-M}))$ given $\eta_N$ is given by 
$r(N-1;g^{\eta_N}_{\tau}(x_1),\ldots,g^{\eta_N}_\tau(x_{N-1}),W^{\eta_1}_{\tau};\cdot,\ldots,\cdot)$. Combining with the coupling of flow lines, we have that the conditional law of $(g^{\eta_N}_{\tau}(\eta_1),\ldots, g^{\eta_N}_{\tau}(\eta_{N-1}))$ equals the law of random curves under $\PP_{N-1}(g^{\eta_N}_{\tau}(x_1),\ldots,g^{\eta_N}_\tau(x_{N-1}); W^{\eta_N}_{\tau})$. This gives the third item and completes the proof.
\end{proof}
In Lemma~\ref{lem::curve_properties}, we will  use cascade relations in Lemma~\ref{lem::cascade_eta} and Lemma~\ref{lem::cascade_gamma} to prove that $(\eta_1,\ldots,\eta_N)\sim\PP_N$ and $(\gamma_1,\ldots,\gamma_N)\sim\QQ_N$ have the same law. 
\begin{lemma}\label{lem::curve_properties}
\begin{itemize}
\item
We have the following equation on partition functions
\begin{align}\label{eqn::partition_func}
&\left(B\left(\frac{2}{\kappa},\frac{2}{\kappa}\right)\right)^{\one_{\{N\text{ is odd}\}}}\times\mathcal{Z}_N(\bold x; x_{N+1})
=\prod_{1\le i\le N}(x_{N+1}-x_i)^{\frac{2}{\kappa}}\times\mathcal{W}_N(\bold x; x_{N+1}),
\end{align}
where we use the convention that $\mathcal{Z}_1(\cdot; \cdot)=1$.
\item
The law of $(\eta_1,\ldots,\eta_N)\sim\PP_N$ equals the law of $(\gamma_1,\ldots,\gamma_N)\sim\QQ_N$.
\end{itemize}
\end{lemma}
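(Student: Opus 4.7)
The plan is to prove both items simultaneously by strong induction on $N$, with the partition function identity at level $N$ derived from its version at $N-1$, and this identity then providing the algebraic input needed to match the marginal laws of the last curves under $\PP_N$ and $\QQ_N$.

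For the base case $N = 1$, the partition function identity is immediate from $\mathcal{Z}_1 = 1$ and the explicit evaluation $\mathcal{W}_1(x_1; x_2) = B\!\left(\frac{2}{\kappa}, \frac{2}{\kappa}\right)(x_2 - x_1)^{-2/\kappa}$ recorded in equation~\eqref{eqn::N=1}. The law equality follows because the $\PP_1$ construction directly produces a flow line whose boundary data, via Miller--Sheffield, corresponds to $\SLE_\kappa\!\left(\frac{\kappa-6}{2}, \frac{\kappa-6}{2}\right)$ from $x_1$ with force points at $x_2^\pm$, while the $N=1$ case of Lemma~\ref{lem::cascade_gamma}, proved in the excerpt via the explicit martingale $R_t$, shows that integrating the $\QQ_1$ construction over $w_1$ yields the same law.

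For the inductive step, assume both claims hold at level $N-1$. The key geometric input is that at the stopping time $\tau$ the point $x_{N+1}$ is swallowed by the hull of $\ell$, so $g_\tau^\ell(x_{N+1}) = W_\tau^\ell$ almost surely as a limit. Since the Radon--Nikodym weights from Lemma~\ref{lem::cascade_eta} item 2 and Lemma~\ref{lem::cascade_gamma} item 2 must each integrate to $1$ under the law of $\ell$, we obtain two expectation identities, one involving $\mathcal{W}_{N-1}(g_\tau^\ell(x_1), \ldots, g_\tau^\ell(x_{N-1}); W_\tau^\ell)$ and the other involving $\mathcal{Z}_{N-1}(g_\tau^\ell(x_1), \ldots, g_\tau^\ell(x_{N-1}); W_\tau^\ell)$. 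Applying the inductive partition function identity at level $N-1$ with anchor $W_\tau^\ell$ to eliminate $\mathcal{Z}_{N-1}$ in favor of $\mathcal{W}_{N-1}$ collapses the random product factors $\prod_i (W_\tau^\ell - g_\tau^\ell(x_i))^{\pm 2/\kappa}$ against their matches in the integrand, reducing the second expectation identity to a multiple of the first. Combining with the parity relation $1 - \one_{\{N-1\text{ odd}\}} = \one_{\{N\text{ odd}\}}$ yields the partition function identity at level $N$.

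With the identity at $N$ established, the two Radon--Nikodym weights, rewritten using $g_\tau^\ell(x_{N+1}) = W_\tau^\ell$ and the inductive identity at level $N-1$, become pointwise equal as functions of $\ell[0,\tau]$. Hence the marginal laws of $\ell_N$ under $\PP_N$ and $\eta_N$ under $\QQ_N$ coincide. Combining with the third items of Lemmas~\ref{lem::cascade_eta} and~\ref{lem::cascade_gamma} (which identify the conditional laws of the first $N-1$ curves given the last as $\QQ_{N-1}$ and $\PP_{N-1}$ respectively, in the conformally mapped domain) and the inductive hypothesis $\PP_{N-1} = \QQ_{N-1}$, we conclude $\PP_N = \QQ_N$. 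The main technical subtlety lies in the rigorous identification $g_\tau^\ell(x_{N+1}) = W_\tau^\ell$ at the swallowing time, and in bookkeeping the index ranges in the products (in particular, tracking the extra factor $(x_{N+1} - x_N)^{2/\kappa}$ that arises from the $(g_t^\ell(x_{N+1}) - W_t^\ell)^{2/\kappa}$ term in $S_t$ at $t=0$).
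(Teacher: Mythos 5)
Your proposal is correct and follows essentially the same route as the paper: induction on $N$, with the base case read off from the $N=1$ case of Lemma~\ref{lem::cascade_gamma} and equation~\eqref{eqn::N=1}, the partition function identity at level $N$ obtained by integrating the Radon--Nikodym weights~\eqref{eqn::weight_ell} and~\eqref{eqn::weight_eta} to $1$ and invoking the level-$(N-1)$ identity at the terminal configuration, and the law equality deduced from the resulting pointwise equality of the two weights together with the third items of the cascade lemmas and the inductive hypothesis $\PP_{N-1}=\QQ_{N-1}$. The paper states this argument very tersely; your write-up supplies exactly the omitted details (the identification $g_\tau^\ell(x_{N+1})=W_\tau^\ell$ at the swallowing time, the parity bookkeeping for the Beta-function exponent, and the extra factor $(x_{N+1}-x_N)^{2/\kappa}$), all of which are consistent with what the paper implicitly uses.
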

\begin{proof}[Proof of Lemma~\ref{lem::curve_properties}]
We will prove by induction. The case $N=1$ has been proved in Lemma~\ref{lem::cascade_gamma}.

Now, we suppose that Lemma~\ref{lem::curve_properties} holds for $N\le n-1$. Then, we have
\begin{itemize}
\item
By~\eqref{eqn::weight_ell} and~\eqref{eqn::weight_eta} and the induction hypothesis,  we can take the expectation and then obtain~\eqref{eqn::partition_func} when $N=n$.
\item
By~\eqref{eqn::weight_ell},~\eqref{eqn::weight_eta} and the induction hypothesis, when $N=n$,  we have that the law of $\eta_n$ equals the law of $\gamma_n$. Moreover,  by Lemma~\ref{lem::cascade_eta} and Lemma~\ref{lem::cascade_gamma} and the induction hypothesis, the conditional law of $(\ell_1,\ldots,\ell_{n-1})$ given $\ell_n$ equals the conditional law of $(\eta_1,\ldots,\eta_{n-1})$ given $\eta_n$. Thus, we have $\PP_n=\QQ_n$.
\end{itemize}
This completes the induction and hence completes the proof.
\end{proof}
With Lemma~\ref{lem::curve_properties} at hands, we can complete the proof of Theorem~\ref{thm::multiple_odd}, Theorem~\ref{thm::multiple_even}.
\begin{proof}[Proof of Theorem~\ref{thm::multiple_odd} and Theorem~\ref{thm::multiple_even}]
Suppose that $(\gamma_1,\ldots,\gamma_N)$ has the same law as random curves under $\PP_N=\QQ_N$. Recall the properties given in Section~\ref{sec::flow_line}.
\begin{itemize}
\item
If $j$ is even,  the law of $(\gamma_1,\ldots,\gamma_N)$ equals the law of $(\ell_1,\ldots,\ell_N)$ under $\PP_N$. Given $\cup_{i\neq N-j}\ell_{i}$, the conditional law of $\ell_{N-j}$ equals the $\SLE_\kappa\left(\frac{\kappa-6}{2},\frac{\kappa-6}{2}\right)$ in $\Omega_{N-j}$ with force points $\ell_{j-1}(\tau_{j-1})$ and $\ell_{j+1}(\tau_{j+1})$. 
\item
If $j$ is odd, the law of $(\gamma_1,\ldots,\gamma_N)$ equals the law of $(\eta_1,\ldots,\eta_N)$ under $\QQ_N$. Given $\cup_{i\neq N-j}\eta_{i}$, the conditional law of $\eta_{N-j}$ equals the $\SLE_\kappa\left(\frac{\kappa-6}{2},\frac{\kappa-6}{2}\right)$ in $\Omega_{N-j}$ with force points $\eta_{j-1}(\tau_{j-1})$ and $\eta_{j+1}(\tau_{j+1})$. 
\end{itemize}
This proves that the law of $(\gamma_1,\ldots,\gamma_N)$ is a multiple $\SLE_\kappa\left(\frac{\kappa-6}{2},\frac{\kappa-6}{2}\right)$ and completes the proof.
\end{proof}

Next, we come to the proof of Theorem~\ref{thm::uniqueness_multiple}.  The strategy is standard (see~\cite{MillerSheffieldIG2}): constructing Markov chain by using the required conditional law, and then proving that it has the unique stationary measure. See also~\cite[Theorem 1.2]{BeffaraPeltolaWuUniqueness} and~\cite[Theorem 4.2]{ZhanExistenceUniquenessMultipleSLE}.  In~\cite[Theorem 1.2]{BeffaraPeltolaWuUniqueness}, the authors proved a stronger result: they derived the exponential decay of mixing time. In our setting, a crucial lemma~\cite[Lemma 3.6]{BeffaraPeltolaWuUniqueness} does not hold. Thus, it is not obvious that how to obtain such an estimate. In this paper, we will use the same argument as in the proof of~\cite[Theorem 4.2]{ZhanExistenceUniquenessMultipleSLE}. Below, we will explain how to modify it to fit our setting. 

Without loss of generality, we may assume that $\Omega=\HH$ and $x_{N+2}=\infty$. For $\hat{\bold \gamma}=(\hat\gamma_1,\ldots,\hat\gamma_N)\in X_N(\HH;x_1,\ldots,x_{N};x_{N+1},\infty)$, suppose $\Phi:=(\Phi_j=(\Phi_j(1),\ldots,\Phi_j(N))_{j\ge 0}\sim\PP_{\hat{\bold \gamma}}$ is the following Markov chain.
\begin{itemize}
\item
The initial state $\Phi_0$ equals $\hat\gamma$.
\item
The conditional law of $\Phi_n$ given $(\Phi_j)_{0\le j\le n-1}$ can be described as follows. First, sample a uniform distribution $U$ on $\{1,\ldots, N\}$. Denote by $\Omega_U$ the connected component of $\HH\setminus\cup_{i\neq U}\Phi_{n-1}(i)$ whose boundary contains $x_U$. As in the definition of multiple $N$-$\SLE_\kappa\left(\frac{\kappa-6}{2},\frac{\kappa-6}{2}\right)$, denote by $\tau_j$ the hitting time of $\Phi_{n-1}(j)$ at $(x_{N+1},+\infty)$ for $1\le j\le N$. Then, denote by $\eta_U$ the $\SLE_\kappa\left(\frac{\kappa-6}{2},\frac{\kappa-6}{2}\right)$ starting from $x_U$ with two force points $\Phi_{n-1}(U-1)(\tau_{U-1})$ and $\Phi_{n-1}(U+1)(\tau_{U+1})$. Set $\Phi_n=(\Phi_{n-1}(1),\ldots,\Phi_{n-1}(U-1),\eta_U,\Phi_{n-1}(U+1),\ldots,\Phi_{n-1}(N))$.
\end{itemize}
We call $T_j$  a tube contain $x_j$, if   $T_j$ is the bounded domain surrounded by $\alpha_j[0,1]\cup(\beta_j(1),\alpha_j(1))\cup \beta_j[0,1]\cup (\alpha_j(0),\beta_j(0))$, where
$\alpha_j$ and $\beta_j$ are two simple curves on $\HH$, and $\alpha_j$ starts from $(x_{j-1},x_j)$ and ends at $(x_{N+1},+\infty)$ and $\beta_j$ starts from $(x_j,x_{j+1})$ and ends at $(x_{N+1},+\infty)$ (here we denote by $x_0:=-\infty$).
Denote by $\mathcal T_j$ the set of simple curves contained in $T_j$, which start from $x_j$ and end at $(\beta_j(1),\alpha_j(1))$. Choose $N$ disjoint tubes $T_1,\ldots, T_N$ such that $T_j$ contains $x_j$ for $1\le j\le N$.
For $1\le j\le N$, denote by $\mu_j$ the law of $\SLE_\kappa\left(\frac{\kappa-6}{2},\frac{\kappa-6}{2}\right)$ curve on $T_j$ which starts from $x_j$ with force points $x_1^R=\beta_j(1)$ on its right and $x_1^L=\alpha_j(1)$ on its left.  
Define a measure $\phi$ on $(\gamma_1,\ldots,\gamma_N)\in X_N(\HH;x_1,\ldots,x_{N};x_{N+1},\infty)$ by 
\[\phi[\cdot]:=\bigotimes_{1\le j\le N}\mu_j\left[\cdot\right].\]
It suffices to prove that for every measurable set $A\subset X_N(\HH;x_1,\ldots,x_{N};x_{N+1},\infty)$ with $\phi[A]>0$,  and for every $\hat\gamma\in X_N(\HH;x_1,\ldots,x_{N};x_{N+1},\infty)$, we have that 
\begin{equation}\label{eqn::transition}
\PP_{\hat\gamma}[\Phi_{3N}\in A]>0.
\end{equation}
The proof of~\eqref{eqn::transition} depends on the following lemma.
\begin{lemma}\label{lem::diff_domain}
Fix $1\le j\le N$. Suppose $\hat T_j$ is a simply connected domain which contains $T_j$ and choose $y_1,y_2\in\partial\hat T_j\cap (x_{N+1},+\infty)$ such that $y_1<\alpha_j(1)<\beta_j(1)<y_2$. Denote by $\ell$ (resp. $\hat\ell$) the $\SLE_\kappa\left(\frac{\kappa-6}{2},\frac{\kappa-6}{2}\right)$ on $T_j$ (resp. $\hat T_j$) with two forces points $x_1^R=\beta_j(1)$ (resp. $x_1^R=y_1$) on its right and $x_1^L=\alpha_j(1)$ (resp. $x_1^L=y_2$) on its left. Then, the law of $\ell$ is absolutely continuous with respect to $\hat\ell$ when restricted on $T_j$. In particular, we have 
\begin{equation}\label{eqn::posi_ex}
\PP[\hat\ell\subset T_j]>0.
\end{equation}
\end{lemma}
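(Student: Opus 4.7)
The plan is to establish mutual absolute continuity between $\ell$ and $\hat\ell|_{T_j}$ by a Schramm--Wilson coordinate change applied to the driving function SDEs; the positivity~\eqref{eqn::posi_ex} then falls out immediately. First, by Riemann's mapping theorem, fix conformal maps $\psi: T_j\to\HH$ and $\hat\psi: \hat T_j\to\HH$, both sending $x_j$ to $0$. Under $\psi$, by conformal invariance $\ell$ becomes the $\SLE_\kappa\left(\frac{\kappa-6}{2},\frac{\kappa-6}{2}\right)$ in $\HH$ started at $0$ with force points $\psi(\beta_j(1))$ on the right and $\psi(\alpha_j(1))$ on the left; its driving function satisfies an SDE of the form~\eqref{eqn::rhoSDE}. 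Similarly, under $\hat\psi$, the curve $\hat\ell$ becomes the analogous SLE with force points at $\hat\psi(y_1)$ and $\hat\psi(y_2)$. The composition $\Psi:=\psi\circ\hat\psi^{-1}$ is a conformal map from $\hat\psi(T_j)\subset\HH$ into $\HH$ fixing $0$, and $\partial\hat\psi(T_j)\cap\HH\neq\emptyset$.

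Next, applying the SLE coordinate change of~\cite[Theorem 6]{SchrammWilsonSLECoordinatechanges} (in the same spirit as the martingale computations in Lemma~\ref{lem::cascade_eta} and Lemma~\ref{lem::cascade_gamma}), I would show that up to any stopping time $\tau$ strictly before the image curve leaves $\hat\psi(T_j)$, the two driving function laws are mutually absolutely continuous; the Radon--Nikodym derivative is a local martingale $M_\tau$ expressible as a product of powers of $\Psi'$ evaluated at the driving point and at the force points together with cross-differences of $\Psi$-values at the force points, with exponents dictated by $\kappa$ and $\rho_L=\rho_R=(\kappa-6)/2$. On any event of the form $\{\hat\ell[0,\tau]\subset K\}$ for a compact set $K\subset T_j$ staying away from $\partial T_j\cap\HH$, the map $\Psi$ is a biholomorphism on a neighbourhood of $\hat\psi(K)$; Koebe distortion then forces $\Psi'$ and the relevant cross-differences to remain in a bounded subinterval of $(0,\infty)$, so $M_\tau$ is bounded above and below by positive constants. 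Exhausting $T_j$ by an increasing sequence $K_n\uparrow T_j$ of such compacts and inverting the conformal maps yields mutual absolute continuity of the laws of $\ell$ and $\hat\ell$ restricted to $\{\hat\ell\subset T_j\}$.

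For the positivity~\eqref{eqn::posi_ex}, since $\PP[\ell\subset T_j]=1$ by definition of $\ell$, the mutual absolute continuity immediately implies $\PP[\hat\ell\subset T_j]>0$. The main technical obstacle is the uniform control of the Radon--Nikodym martingale $M_\tau$ on compact subsets of $T_j$: one must verify that the product of conformal derivative factors and cross-difference factors stays in a fixed compact subinterval of $(0,\infty)$ uniformly on $\{\hat\ell[0,\tau]\subset K\}$. This is handled by Koebe's distortion theorem applied to $\Psi$ on a conformal collar of $K$, together with the fact that the force points $\psi(\alpha_j(1)), \psi(\beta_j(1)), \hat\psi(y_1), \hat\psi(y_2)$ are fixed boundary points, bounded away from the driving point as long as the curve remains in $K$.
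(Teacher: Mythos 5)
Your argument is correct in outline, but it takes a genuinely different route from the paper. The paper's proof is a one-line reduction to the Gaussian free field: it couples $\ell$ and $\hat\ell$ as flow lines of fields $\Gamma$ on $T_j$ and $\hat\Gamma$ on $\hat T_j$, invokes absolute continuity of the two fields restricted to (subdomains of) $T_j$, and uses that the flow line is a measurable function of the field, following \cite[Lemma 3.1]{Yureversibility}; the curve-level statement then comes for free, with no analysis of the curve near its terminal point. Your route is the classical driving-function one: a Girsanov/coordinate-change martingale in the spirit of \cite{SchrammWilsonSLECoordinatechanges} combining the domain change ($\Psi'$ factors) with the relocation of the force points (cross-difference factors). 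This works, but be aware of two points you currently pass over. First, your compacts $K$ must be allowed to touch $\partial T_j\cap\R$ (both the arc containing $x_j$ and the target arc $(\beta_j(1),\alpha_j(1))$), since the full curve terminates there and $\{\hat\ell\subset T_j\}$ is not exhausted by events of confinement to interior compacts; consequently the needed bounds on $\Psi'$ are not interior Koebe distortion but require extending $\Psi$ by Schwarz reflection across the common boundary arcs, where one must also check that the boundary data (equivalently, the force-point configuration) of the two processes agree on those arcs. Second, the change of measure must be pushed to the random terminal time: you need the Radon--Nikodym local martingale to remain bounded above and below up to and including the hitting time of the target arc on $\{\hat\ell\subset K\}$, which uses that the driving point terminates at an interior point of the target arc and hence stays away from all four force-point images. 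Both issues are standard and fixable, so your proof can be completed; the trade-off is that the paper's GFF argument avoids this terminal-time analysis entirely, while yours is more self-contained at the level of Loewner chains and yields the two-sided (mutual) absolute continuity explicitly. Your derivation of~\eqref{eqn::posi_ex} from $\PP[\ell\subset T_j]=1$ (which itself uses that $\ell$ does not touch $\alpha_j\cup\beta_j$ before reaching the target arc, since the force points sit at $\alpha_j(1),\beta_j(1)$ and $\kappa\le 4$) is then correct.
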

\begin{proof}
This is same as the proof of~\cite[Lemma 3.1]{Yureversibility}: Consider the flow line coupling $(\ell,\Gamma)$ (resp. $(\hat\ell,\hat\Gamma)$) on $T_j$ (resp. $\hat T_j$). The law of $\Gamma$ is absolutely continuous with respect to $\hat\Gamma$ restricted on $T_j$. Since flow line is a measurable function of the underlying field, we obtain the absolute continuity and~\eqref{eqn::posi_ex}.
\end{proof}
Now, we come back to the proof of Theorem~\ref{thm::uniqueness_multiple} and Corollary~\ref{coro::hitting_law}.
\begin{proof}[Proof of Theorem~\ref{thm::uniqueness_multiple}]
We only need to prove~\eqref{eqn::transition}. The proof is almost the same as the proof of~\cite[Theorem 4.2]{ZhanExistenceUniquenessMultipleSLE} and we only give the outline below. It suffices to construct a sequence of measurable sets $A_0,\ldots A_{3N}$ such that $A_0=\prod_{1\le j\le N}\{\gamma_j\}$ and $A_{3N}=A$, and for $1\le j\le 3N$, for every $\hat\chi\in A_{j-1}$, we have that
\begin{equation}\label{eqn::transition_prob}
\PP_{\hat\chi}[\Phi_1\in A_j]>0.
\end{equation}
The construction consists of three steps.
\begin{itemize}
\item
In the first step, we can choose another $N$ disjoint tubes $S_1,\ldots,S_N$ such that $S_j$ contains $x_j$ for $1\le j\le N$ and $S_j$ is disjoint from $\gamma_k$ and $T_k$ for $1\le j<k\le N$. Define $A_j:=\prod_{1\le i\le j}\mathcal S_j\times\prod_{j+1\le i\le N}\{\gamma_i\}$. By Lemma~\ref{lem::diff_domain}, we obtain~\eqref{eqn::transition_prob} for $1\le j\le N$.
\item
In the second step, let $A_{N+j}=\prod_{1\le i\le N-j}\mathcal S_i\times\prod_{N-j+1\le i\le N}\mathcal T_i$. Still by Lemma~\ref{lem::diff_domain}, we obtain~\eqref{eqn::transition_prob} for $N+1\le j\le 2N$.
\item
In the third step, inductively, we define $A_{2N+j-1}$ (for $2\le j\le N$) to be the set of $(\chi(1),\ldots,\chi(N))\in\prod_{1\le i\le N}\mathcal T_i$ such that for $\ell\sim\mu_j$, we have
\begin{equation}\label{eqn::cons_A}
\mu_j[(\chi(1),\ldots,\chi({j-1}),\ell,\chi(r_{j+1}),\ldots, \chi(N))\in A_{N+2j}]>0.
\end{equation}
Then, for $1\le j\le N$, we have that $A_{2N+j}$ has the form $\mathcal C_j\times\prod_{j+1\le i\le N}\mathcal T_i$ for some measurable set $\mathcal C_j\subset\prod_{1\le i\le j}\mathcal T_i$. For every $\hat\chi\in A_{2N+j-1}$, let $\hat T_j$ be the connected component of $\HH\setminus\cup_{i\neq j}\chi(i)$ which contains $x_j$ and choose $y_1=\chi(j-1)(\tau_{j-1})$ and $y_2=\chi(j+1)(\tau_{j+1})$ in Lemma~\ref{lem::diff_domain}. Recall that we denote by $\hat\ell$ the $\SLE_\kappa\left(\frac{\kappa-6}{2},\frac{\kappa-6}{2}\right)$ on $\hat T_j$ with two forces points  $x_1^R=y_1$ on its right and  $x_1^L=y_2$ on its left. Then, by Lemma~\ref{lem::diff_domain} and~\eqref{eqn::cons_A}, we have
\[\PP_{\hat\chi}[\Phi_1\in A_{2n+j}]=\frac{1}{N}\PP[(\chi(1),\ldots,\chi({j-1}),\hat\ell,\chi(r_{j+1}),\ldots, \chi(N))\in A_{N+2j}]>0.\]
Thus,  we obtain~\eqref{eqn::transition_prob} for $2N+1\le j\le 3N$.
\end{itemize}
This completes the proof.
\end{proof}
\begin{proof}[Proof of Corollary~\ref{coro::hitting_law}]
By Theorem~\ref{thm::uniqueness_multiple}, the law of $(\gamma_1,\ldots,\gamma_N)$ equals the law of random curves in Theorem~\ref{thm::multiple_odd} and Theorem~\ref{thm::multiple_even}. Combining with the properties given in Section~\ref{sec::flow_line}, we complete the proof.
\end{proof}

\section{Proof of Theorem~\ref{thm::same_point_odd}--Corollary~\ref{coro::same_point_hitting_law}}
\label{sec::jacobi_SLE}
In this section, we will prove Theorem~\ref{thm::same_point_odd}--Corollary~\ref{coro::same_point_hitting_law}.
\begin{proof}[Proof of Theorem~\ref{thm::same_point_odd} and Theorem~\ref{thm::same_point_even}]
The proof is almost the same as the proof of Theorem~\ref{thm::multiple_odd} and Theorem~\ref{thm::multiple_even}: Note that in Lemma~\ref{lem::cascade_eta}--Lemma~\ref{lem::curve_properties}, we do not essentially use the assumption that $x_1<\ldots<x_N$. The same proof still works when we let $x_1=\cdots=x_N=x$ and $x_{N+1}=u$. This completes the proof.
\end{proof}
Note that the proof of Theorem~\ref{thm::uniqueness_multiple} can not be applied when the random curves start from the same point: we can not choose disjoint tubes. Based on  Theorem~\ref{thm::uniqueness_multiple}, we will use an approximation procedure.
\begin{proof}[Proof of Theorem~\ref{thm::same_point_uniqueness}]
We will prove by induction. Suppose Theorem~\ref{thm::same_point_uniqueness} holds when $N\le n-1$.  Next, we prove Theorem~\ref{thm::same_point_uniqueness} when $N=n$. Suppose $(\gamma_1,\ldots, \gamma_n)$ is a multiple $n$-$\SLE\left(\frac{\kappa-6}{2},\frac{\kappa-6}{2}\right)$ on $X_n(\HH;x;u,\infty)$. By induction hypothesis, it suffices to determine the law of $\gamma_n$. We parameterise curves by its half-plane capacity.
For $0<\eps<\eps_0<\frac{1}{2}(u-x)$, denote by $\sigma_j^\eps$ the hitting time of $\gamma_j$ at $\partial B(x,\eps)$ for $1\le j\le n-1$ and by $\sigma_n^{\eps_0}$ the hitting time of $\gamma_n$ at $\partial B(x,\eps_0)$.
 We generate $\gamma_1[0,\sigma_1^\eps],\ldots,\gamma_{n-1}[0,\sigma_{n-1}^\eps]$ and $\gamma_n[0,\sigma_{\eps_0}]$. Consider the following random curves.
 \begin{itemize}
\item
Denote by $g^{\eps, \eps_0}$ the conformal map from $\HH\setminus\left(\cup_{1\le j\le n-1}\gamma_j[0,\sigma_j^\eps]\cup\gamma_n[0,\sigma_n^{\eps_0}]\right)$ onto $\HH$ normalised at infty. 
Define $\gamma_n^{\eps,\eps_0}(t):=g^{\eps, \eps_0}(\gamma_n(\sigma_n^{\eps_0}+t))$ for $t\ge 0$. Denote by $x_j=x_j^{\eps,\eps_0}:=g^{\eps, \eps_0}(\gamma_j(\sigma_j^\eps))$ for $1\le j\le n-1$ and by $x_n=x_n^{\eps,\eps_0}:=g^{\eps, \eps_0}(\gamma_n(\sigma_n^{\eps_0}))$ and by $x_{n+1}=x_{n+1}^{\eps,\eps_0}:=g^{\eps, \eps_0}(u)$. 
Denote by $\ell^{\eps,\eps_0}$ the $\SLE_\kappa\left(2,\ldots,2;\frac{\kappa-6}{2}\right)$ curve starting from $x_n$ with force points $x_1^L=x_{n-1},\ldots, x_{n-1}^L=x_1$ on its left and $x_1^R=x_{n+1}$ on its right.
Denote by $(W^{\eps,\eps_0}_t:t\ge 0)$ the driving function of $\ell^{\eps,\eps_0}$ and by $(g_t^{\eps,\eps_0}:t\ge 0)$ the corresponding conformal maps.
\item
Denote by $g^{\eps_0}$ the conformal map from $\HH\setminus\gamma_n[0,\sigma_n^{\eps_0}]$ onto $\HH$ normalised at infty. 
Define $\gamma_n^{\eps_0}(t):=g^{\eps_0}(\gamma_n(\sigma_n^{\eps_0}+t))$ for $t\ge 0$. 
Denote by $x^{-,\eps_0}:=g^{\eps_0}(x^-)$ and by $x^{\eps_0}:=g^{\eps_0}(\gamma_n(\sigma_n^{\eps_0}))$ and by $u^{\eps_0}:=g^{\eps_0}(u)$. 
Denote by $\ell^{\eps_0}$ the $\SLE_\kappa\left(2(n-1);\frac{\kappa-6}{2}\right)$ curve starting from $x^{\eps_0}$ with force points $x_1^L=x^{-,\eps_0}$ on its left and $x_1^R=u^{\eps_0}$ on its right.
Denote by $(W^{\eps_0}_t:t\ge 0)$ the driving function of $\ell^{\eps_0}$ and by $(g_t^{\eps_0}:t\ge 0)$ the corresponding conformal maps.
\end{itemize}
Denote by $\PP^{\eps,\eps_0}$ the conditional law of $\gamma_n^{\eps,\eps_0}$ given $\gamma_1[0,\sigma_1^\eps],\ldots,\gamma_{n-1}[0,\sigma_{n-1}^\eps]$ and $\gamma_n[0,\sigma_{\eps_0}]$.
By Theorem~\ref{thm::uniqueness_multiple}, under  $\PP^{\eps,\eps_0}$, the law of $\gamma_n^{\eps,\eps_0}$ equals the law of $\ell_n$ under $\PP_n(x_1,\ldots, x_n; x_{n+1})$. For $\eps'>0$, denote by $\tau^{\eps,\eps_0}_{\eps'}$ the hitting time of curves at the union of $\partial B(x_n,\frac{1}{\eps'})$ and the $\eps'$-neighbourhood of $(x_{n+1},+\infty)$. Using the same notations as in Lemma~\ref{lem::cascade_eta}, denote by 
\[U^{\eps,\eps_0}_{t}:=\mathcal Z_n(g^{\eps,\eps_0}_{t}(x_1),\ldots, g^{\eps,\eps_0}_{t}(x_{n-1}), W^{\eps,\eps_0}_{t};g^{\eps,\eps_0}_{t}(x_{n+1})).\]
Similarly to~\eqref{eqn::bound_U_1}, there exists a constant $C_1=C_1(x,u)>0$, such that for $0\le t\le \tau^{\eps,\eps_0}_{\eps'}$, we have
\begin{equation}\label{eqn::uni_boundaux1}
U^{\eps,\eps_0}_{t}\le C_1.
\end{equation}
For $1\le i\le n-1$ and $1\le j\le M$, we have
\[u_j-x_n\le u_j-x_1\quad\text{and}\quad u_j-x_i\le u_j-x_1.\]
By the same computation in~\eqref{eqn::bound_U_1}, we have
\[U^{\eps,\eps_0}_{0}\ge (x_{n+1}-x_1)^{\frac{4}{\kappa}M(M-n+\frac{1}{2})}\int_{0<u_1<\cdots<u_M<1}\prod_{1\le i<j\le M}(u_j-u_i)^{\frac{8}{\kappa}}\prod_{1\le j\le M}u_j^{\frac{4n-8M+2-\kappa}{\kappa}}(1-u_j)^{\frac{6-\kappa}{\kappa}}\ud u_j\]
Thus, by the choice of $\eps$ and $\eps_0$, there exists a constant $C_2=C_2(x,u)>0$, such that
\begin{equation}\label{eqn::uni_boundaux2}
U^{\eps,\eps_0}_{0}\ge C_2.
\end{equation}
Combining~\eqref{eqn::uni_boundaux1} and~\eqref{eqn::uni_boundaux2} and~\eqref{eqn::weight_elleps} together, if define $M_t:=\frac{U^{\eps,\eps_0}_{t}}{U^{\eps,\eps_0}_0}$, we have that $\{M_{t\wedge\tau^{\eps,\eps_0}_{\eps'}}\}_{t\ge 0}$ is a bounded martingale for $\ell^{\eps,\eps_0}$. Moreover, for every continuous function $g$, we have
\begin{equation}\label{eqn::uni_aux2}
\E^{\eps,\eps_0}[g\left(\gamma_n^{\eps,\eps_0}[0,t\wedge\tau^{\eps,\eps_0}_{\eps'}]\right)]=\E\left[g\left(\ell^{\eps,\eps_0}[0,t\wedge\tau^{\eps,\eps_0}_{\eps'}]\right)M_{t\wedge\tau^{\eps,\eps_0}_{\eps'}}\right].
\end{equation}
By the convergence of $g^{\eps,\eps_0}$ to $g^{\eps_0}$ when $\eps\to 0$, we have
\begin{equation}\label{eqn::uni_aux1}
\gamma_n^{\eps,\eps_0}\to \gamma_n^{\eps_0} \text{ locally uniformly}, \quad x_j^{\eps,\eps_0}\to x^{-,\eps_0}\text{ for }1\le j\le n-1, \quad x_n^{\eps,\eps_0}\to x^{\eps_0} \quad\text{and}\quad x_{n+1}^{\eps,\eps_0}\to u^{\eps_0}. 
\end{equation}
By the same proof of~\cite[Proposition 4.10]{LWlevelline}, we have
\begin{equation}\label{eqn::uni_aux3}
W^{\eps,\eps_0}\to W^{\eps_0} \text{ locally uniformly}.
\end{equation}
For $v_1<v_2<v_3$, define
\[U(v_1,v_2,v_3):=\int_{\{v_3<u_1\cdots<u_M\}}\prod_{1\le i<j\le M}(u_j-u_i)^{\frac{8}{\kappa}}\prod_{1\le j\le M}(u_j-v_1)^{-\frac{4}{\kappa}(n-1)}(u_j-v_2)^{-\frac{4}{\kappa}}(u_j-v_3)^{\frac{6-\kappa}{\kappa}}\ud u_j.\]
Denote by $\tau^{\eps_0}_{\eps'}$ the hitting time of curves at the union of $\partial B(x^{\eps_0},\frac{1}{\eps'})$ and the $\eps'$-neighbourhood of $(u^{\eps_0},+\infty)$. For $0\le t\le \tau^{\eps_0}_{\eps'}$, define $N_t:=\frac{U(g^{\eps_0}_{t}(x^{-,\eps_0}),W^{\eps_0}_{t}, g^{\eps_0}_{t}(u^{\eps_0}))}{U(x^{-,\eps_0},x^{\eps_0},u^{\eps_0})}$. Since $\ell^{\eps,\eps_0}[0,\tau^{\eps,\eps_0}_{\eps'}]$ converges to $\ell^{\eps_0}[0,\tau^{\eps_0}_{\eps'}]$ in law, combining~\eqref{eqn::uni_boundaux1}--\eqref{eqn::uni_aux3} together and by dominated convergence theorem, by letting $\eps\to 0$, we have
\begin{equation}\label{eqn::uni_aux4}
\E[g\left(\gamma_n^{\eps_0}[0,t\wedge\tau^{\eps_0}_{\eps'}]\right)]=\E\left[g\left(\ell^{\eps_0}[0,t\wedge\tau^{\eps_0}_{\eps'}]\right)N_{t\wedge\tau^{\eps_0}_{\eps'}}\right].
\end{equation}
Since $\eps'$ is arbitrary, by Girsanov's theorem, before hitting $(u^{\eps_0},+\infty)$, the driving function $W^{\eps_0}$ satisfies the following SDE
\[\ud W_t^{\eps_0}=\sqrt\kappa \ud B_t+\left(\frac{2(n-1)}{W_t^{\eps_0}-g_t^{\eps_0}(x^{-,\eps_0})}+\frac{(\kappa-6)/2}{W_t^{\eps_0}-g_t^{\eps_0}(u^{\eps_0})}+\frac{\kappa}{2}\partial_{v_2} \log U(g^{\eps_0}_t(x^{-,\eps_0}),W^{\eps_0}_t, g^{\eps_0}_t(u^{\eps_0}))\right)\ud t\]
where $B$ is the standard Brownian motion. Hence, before hitting $(u^{\eps_0},+\infty)$, we have
\begin{equation}\label{eqn::uni_aux4}
W_t^{\eps_0}=W_0^{\eps_0}-2(n-1)g_t^{\eps_0}(x^{-,\eps_0})-\frac{\kappa-6}{2} g_t^{\eps_0}(u^{\eps_0})+\sqrt\kappa B_t+\frac{\kappa}{2}\int_0^t \partial_{v_2} \log U(g^{\eps_0}_s(x^{-,\eps_0}),W^{\eps_0}_s, g^{\eps_0}_s(u^{\eps_0}))\ud s,
\end{equation}
Denote by $(W_t: t\ge 0)$ the driving function of $\gamma_n$ and by $(g_t:t\ge 0)$ the corresponding conformal maps. Then, we have
\[W_t^{\eps_0}=W_{t+\sigma_n^{\eps_0}},\quad g^{\eps_0}=g_{\sigma_n^{\eps_0}},\quad{and}\quad g^{\eps_0}_t=g_{\sigma_n^{\eps_0}+t}\circ g^{-1}_{\sigma_n^{\eps_0}}.\]
Combining with~\eqref{eqn::uni_aux4}, after $\sigma_n^{\eps_0}$ and before hitting $(u,+\infty)$, we have
\begin{equation}\label{eqn::uni_aux5}
W_t=W_{\sigma_n^{\eps_0}}-2(n-1)g_t(x^-)-\frac{\kappa-6}{2} g_t(u)+\sqrt\kappa \left(B_t-B_{\sigma_n^{\eps_0}}\right)+\frac{\kappa}{2}\int_{\sigma_n^{\eps_0}}^t \partial_{v_2} \log U(g_s(x^-),W_s, g_s(u))\ud s.
\end{equation}
By letting $\eps_0\to 0$, by monotone convergence theorem, almost surely, we have
\[\lim_{\eps_0\to 0}\int_{\sigma_n^{\eps_0}}^t \partial_{v_2} \log U(g_s(x^-),W_s, g_s(u))\ud s=\int_{0}^t \partial_{v_2} \log U(g_s(x^-),W_s, g_s(u))\ud s.\]
Plugging into~\eqref{eqn::uni_aux5}, almost surely, we have
\begin{equation}\label{eqn::SDE_gamma}
W_t=x-2(n-1)g_t(x^-)-\frac{\kappa-6}{2} g_t(u)+\sqrt\kappa B_t+\frac{\kappa}{2}\int_{0}^t \partial_{v_2} \log U(g_s(x^-),W_s, g_s(u))\ud s.
\end{equation}
Denote by $\ell$ the $\SLE_\kappa\left(2(n-1);\frac{\kappa-6}{2}\right)$ starting from $x$ with force points $x_1^L=x^-$ on its left and $x_1^R=u$ on its right. Then,~\eqref{eqn::SDE_gamma} implies that the law of $\gamma_n$ is absolutely continuous with respect to $\ell$ and determines the Radon-Nikodym derivative. Hence, the law of $\gamma_n$ is uniquely determined by~\eqref{eqn::SDE_gamma}.
This completes the proof.
\end{proof} 
\begin{proof}[Proof of Corollary~\ref{coro::same_point_hitting_law}]
The descriptions of hitting points of flow lines given in Section~\ref{sec::flow_line} still holds for flow lines in Theorem~\ref{thm::same_point_odd} and Theorem~\ref{thm::same_point_even}. Combining Theorem~\ref{thm::same_point_odd}--Theorem~\ref{thm::same_point_uniqueness} together, we complete the proof.
\end{proof}


\section{Proof of Theorem~\ref{thm::Ising_limit}}
\label{sec::Ising_conv}
In this section, we will prove Theorem~\ref{thm::Ising_limit}.  Recall that we denote by $\gamma_j^\delta$ the interface starting from $x_j^\delta$ for $1\le j\le N$. We first recall the convergence of the law of $\gamma_j^\delta$ as $\delta\to 0$.
\begin{lemma}\label{lem::conv_aux}
Use the same notations as in Theorem~\ref{thm::Ising_limit}. For $1\le j\le N$, the law of $\gamma_j^\delta$ converges as $\delta\to 0$. Denote by $\gamma_j$ the limit. Denote by $(W_t^{\gamma_j}:t\ge 0)$ the driving function of $\phi(\gamma_j)$ and by $(g_t^{\gamma_j}:t\ge 0)$ the corresponding conformal map. Denote by $T$ the hitting time of $\gamma_j$ at the union of $\{x_1,\ldots,x_{j-1},x_{j+1},\ldots,x_{N},x_{N+1},x_{N+2}\}$ and the free boundary $(x_{N+1}x_{N+2})$. Then, before time $T$, the driving function $W^{\gamma_j}$ satisfies the SDE
\begin{equation}\label{eqn::local_driving}
\ud W_t^{\gamma_j}=\sqrt 3\ud B_t+\partial_j \log R(g^{\gamma_j}_t(\phi(x_1)),\ldots,g^{\gamma_j}_t(\phi(x_{j-1})), W_t^{\gamma_j}, g^{\gamma_j}_t(\phi(x_{j+1})), \ldots,g^{\gamma_j}_t(\phi(x_N));g^{\gamma_j}_t(\phi(x_{N+1})))\ud t,
\end{equation}
where $B$ is the standard Brownian motion.
\end{lemma}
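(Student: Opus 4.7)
The plan is to reduce the lemma to the $j=1$ case, which is established by Izyurov~\cite{IzyurovObservableFree}, with the explicit form of the driving function as written in~\cite[Eq.~3.6--3.7]{FengWuYangIsing}, and to extend to general $j$ via the same discrete holomorphic observable method. The key observation is that the local boundary condition around each starting point $x_j^\delta$ is identical (a $\pm$ alternation), so the only thing that changes between $\gamma_1^\delta$ and $\gamma_j^\delta$ is which alternating arc and which marked point plays which role; all arguments of~\cite{IzyurovObservableFree} adapt with the obvious relabelling of the observable's branch structure.

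First I would establish tightness of $\{\mathrm{Law}(\phi(\gamma_j^\delta))\}_\delta$ on the curve metric space of~\eqref{eqn::curve_metric}. This follows from Lemma~\ref{lem::RSW} combined with the Kemppainen--Smirnov criterion: the strong RSW estimate gives the required uniform annulus crossing bounds both near the free arc $(x_{N+1}x_{N+2})$ and near each marked point, so every subsequential limit is a Loewner chain driven by a continuous process $(W_t^{\gamma_j})$.

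Next, to identify the driving function, I would construct a discrete fermionic observable $F_j^\delta(z)$ anchored at the starting point $x_j^\delta$, with boundary conditions matching the alternating $\pm$ arcs and the free arc and with a prescribed square-root type branching at $x_j^\delta$. By the discrete $s$-holomorphicity and Chelkak--Smirnov-type arguments, $F_j^\delta$ converges to a holomorphic function $f_j$; its ratio to the discrete partition function $R_N^\delta$ is a martingale along $\gamma_j^\delta$ that passes to the limit as a martingale for any continuous subsequential limit. Reading off the asymptotic expansion at the tip yields precisely the drift $\partial_j \log R_N$, and then Girsanov's theorem gives~\eqref{eqn::local_driving}. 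Uniqueness of the SDE for $W_t^{\gamma_j}$ up to $T$ forces every subsequential limit to coincide, upgrading subsequential convergence to convergence.

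The main technical obstacle is pushing the convergence to the full hitting time $T$: both the continuous observable $f_j$ and the drift $\partial_j \log R_N$ develop singularities whenever the tip approaches some $x_k$ with $k \neq j$ or the free arc, so the discrete martingale argument only gives convergence up to a stopping time $T - \eps$ at first. One then passes $\eps \downarrow 0$ by comparing $\gamma_j^\delta$, on the event that its tip stays $\eps$-away from the other marked points and the free arc, with a reference $\SLE_3$ process with the local force points $x_{j-1}$, $x_{j+1}$, $x_{N+1}$, $x_{N+2}$, for which the absolute-continuity machinery of SLE with force points applies uniformly in $\delta$. This is the same scheme as in~\cite{IzyurovObservableFree,FengWuYangIsing} and delivers the SDE~\eqref{eqn::local_driving} on the full interval $[0,T)$.
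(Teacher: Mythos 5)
Your proposal is correct and follows essentially the same route as the paper: the paper simply cites \cite[Theorem 3.1]{IzyurovObservableFree} for the convergence of the driving function (via the fermionic observable/martingale argument you sketch) and \cite{KarrilaMultipleSLELocalGlobal, FengWuYangIsing} for the tightness and upgrade to convergence in the curve metric, which is exactly the RSW/Kemppainen--Smirnov plus local-to-global scheme you describe. Your additional care about the singularities near the other marked points is consistent with the lemma only asserting the SDE before the stopping time $T$.
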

\begin{proof}
The conclusion is the combination of Lemma 3.4 and Lemma 3.6 in~\cite{FengWuYangIsing} after mapping from $\mathbb D$ to $\HH$. Below, we explain it briefly.
\begin{itemize}
\item
The convergence of the driving function has been proved in~\cite[Theorem 3.1]{IzyurovObservableFree}. Note that in~\cite[Theorem 3.1]{IzyurovObservableFree}, we only need the convergence of $\{(\Omega_\delta;x_1^\delta,\ldots,x_N^\delta;x_{N+1}^\delta,x_{N+2}^\delta)\}_{\delta>0}$ to $(\Omega;x_1,\ldots,x_N;x_{N+1},x_{N+2})$ in the Carath\'eodory sense.
\item
The proof of $\gamma_j^\delta$ under the metric~\ref{eqn::curve_metric} is similar to the proof of~\cite[Theorem 5.1 and Proposition 6.1]{KarrilaMultipleSLELocalGlobal}.
\end{itemize}
This completes the proof.
\end{proof}
Note that in Lemma~\ref{lem::conv_aux}, we do not prove that $\gamma_j$ hits $\partial\Omega$ only at its two ends for $1\le j\le N$, which is crucial in the proof of Theorem~\ref{thm::Ising_limit}. In Lemma~\ref{lem::conv_aux1}, we will give a uniform control of such events in discrete setting for $\delta>0$.
\begin{lemma}\label{lem::conv_aux1}
Fix $1\le j\le N$. For every $\eps>0$ and $\delta>0$, denote by $\sigma_{j,\eps}^\delta$ (resp. $\tau_{j,\eps}^\delta$) the hitting time of $\gamma_j^\delta$ at the $\eps$-neighbourhood of $\{x_1^\delta,\ldots, x_{j-1}^\delta,x_{j+1}^\delta,\ldots,x_N^\delta\}$ (resp. $(x_{N+1}^\delta x_{N+2}^\delta)$). Then, there exists constants $C_1>0$ and $C_2>0$, such that
\begin{equation}\label{eqn::error_estimate1}
\PP[\sigma_{j,\eps}^\delta<\infty, \text{ and }\gamma_j^\delta\text{ ends at }(x_{N+1}^\delta x_{N+2}^\delta)]\le C_1\eps^{C_2}.
\end{equation}
and
\begin{equation}\label{eqn::error_estimate2}
\PP[\tau_{j,\eps}^\delta<\infty,\text{ and then }\gamma_j^\delta\text{ hits }\partial B(\gamma_j^\delta(\tau_{j,\eps}^\delta),\sqrt\eps)\text{ before hitting }(x_{N+1}^\delta x_{N+2}^\delta)] \le C_1\eps^{C_2}
\end{equation}
and
\begin{equation}\label{eqn::error_estimate3}
\PP\left[\gamma_j^\delta\text{ hits }\partial B(x_{N+1}^\delta,\eps)\cup \partial B(x_{N+2}^\delta,\eps)\right] \le C_1\eps^{C_2}
\end{equation}
\end{lemma}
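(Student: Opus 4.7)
The plan is to derive each of \eqref{eqn::error_estimate1}--\eqref{eqn::error_estimate3} from the strong RSW estimate (Lemma~\ref{lem::RSW}) via a standard dyadic-annulus argument. The unifying strategy is to translate each rare geometric event for the interface $\gamma_j^\delta$ into a spin-crossing configuration in concentric annuli around the relevant boundary point, then apply Lemma~\ref{lem::RSW} together with monotonicity in boundary conditions (both recalled in Section~\ref{sec::Ising}) inside each annulus, and chain the estimates via the domain Markov property to obtain polynomial decay in $\eps$.

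For \eqref{eqn::error_estimate1}: if $\gamma_j^\delta$ enters $B(x_k^\delta,\eps)$ for some $k\neq j$ and still ends on the free arc $(x_{N+1}^\delta x_{N+2}^\delta)$, then on both sides of $\gamma_j^\delta$ there must exist monochromatic clusters of signs $(-1)^j$ and $(-1)^{j+1}$ that cross the shell $B(x_k^\delta,r)\setminus B(x_k^\delta,\eps)$ for some macroscopic $r>0$. Since the alternating boundary near $x_k^\delta$ carries signs $(-1)^k$ and $(-1)^{k+1}$, I would set up dyadic annuli $A_n:=B(x_k^\delta,2^{n}\eps)\setminus B(x_k^\delta,2^{n-1}\eps)$ for $1\le n\le \lfloor\log_2(r/\eps)\rfloor$. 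In each $A_n$ at least one of the two required spin crossings must traverse a discrete topological rectangle whose opposite arcs are boundary segments of opposite sign, and Lemma~\ref{lem::RSW}, after dominating the unknown outside configuration by the worst-case monotone boundary data, yields a uniform probability $\eta\in(0,1)$ that the crossing \emph{fails}. Iterating over the $\Theta(\log(1/\eps))$ annuli via the domain Markov property gives $(1-\eta)^{\Theta(\log(1/\eps))}=O(\eps^{C_2})$.

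For \eqref{eqn::error_estimate2}, the event forces an interface excursion of length $\ge \sqrt{\eps}$ inside the $\eps$-neighbourhood of the free arc, so there must be opposing monochromatic crossings in $\Theta(\log(\sqrt{\eps}/\eps))=\Theta(\log(1/\eps))$ dyadic shells centred at $\gamma_j^\delta(\tau_{j,\eps}^\delta)$. Each such annular crossing fails with uniform positive probability under the free boundary on one side by Lemma~\ref{lem::RSW}, so the same iteration yields polynomial decay. For \eqref{eqn::error_estimate3}, the points $x_{N+1}^\delta$ and $x_{N+2}^\delta$ lie at the free/monochromatic transition; if $\gamma_j^\delta$ enters $B(x_{N+1}^\delta,\eps)$, then both spin signs must reach into this ball from the macroscopic scale, producing crossings of opposite colours in every dyadic annulus around $x_{N+1}^\delta$, and Lemma~\ref{lem::RSW} in the mixed free/wired setting provides uniform failure probability per annulus.

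The main obstacle is the careful setup of the crossing events in annuli adjacent to $\partial\Omega_\delta$, where the available boundary data mixes the alternating monochromatic segments with the free arc. One either shrinks the annuli slightly so that Lemma~\ref{lem::RSW} (topological rectangles with free on two opposite arcs and $+$ on the other two) applies directly, or extracts slightly interior annuli on which the outside configuration can be dominated by the worst-case monotone boundary condition via the FKG-type monotonicity recalled in Section~\ref{sec::Ising}. A secondary technical point is ensuring that the constants $C_1, C_2$ obtained are uniform in $\delta$, which is immediate since Lemma~\ref{lem::RSW} itself is uniform in the lattice mesh.
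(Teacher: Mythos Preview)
Your proposal is correct and follows essentially the same approach as the paper: dyadic annuli around the relevant boundary point, combined with Lemma~\ref{lem::RSW}, the domain Markov property, and monotonicity in boundary conditions, chained to give $(1-\eta)^{\Theta(\log(1/\eps))}=O(\eps^{C_2})$. The one concrete detail the paper adds is that, for~\eqref{eqn::error_estimate1} with $j=N$, it first explores the leftmost $+1$ cluster $\LC_N^\delta$ connecting $(x_{N-1}^\delta x_N^\delta)$ to the free arc and then takes the connected components of each annulus minus $\LC_N^\delta$ as the topological rectangles with the required boundary data---this is precisely the ``careful setup'' you flag as the main obstacle.
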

\begin{proof}
The proof is based on the standard argument using Lemma~\ref{lem::RSW}. For completeness, we provide a proof of~\eqref{eqn::error_estimate1} when $j=N$. The other cases and~\eqref{eqn::error_estimate2} and~\eqref{eqn::error_estimate3} can be proved similarly.
We may assume that $N$ is even such that the spins on the left of $\gamma_N^\delta$ equal $+1$ and the spins on the right of  $\gamma_N^\delta$ equal $-1$.

On the event $\{\gamma_j^\delta\text{ ends at }(x_{N+1}^\delta x_{N+2}^\delta)\}$, there is a $+1$ cluster connecting $(x_{N-1}^\delta x_N^\delta)$ to $(x_{N+1}^\delta x_{N+2}^\delta)$. Denote by $\LC_N^\delta$ the leftmost one (the precise choice is not important). For $0\le\ell\le -\log_2\sqrt\eps$, we define the following domains.
\begin{itemize}
\item
Denote by $C^\delta_{\ell,k}$ the connected component of $(B(x_j^\delta, 2^\ell\eps)\setminus B(x_j^\delta, 2^{\ell-1}\eps))\setminus \LC_N^\delta$ which contains parts of $\LC_N^\delta$ on its boundary, for $1\le k\le k_\ell$. Denote by $\alpha_{\ell,k}^\delta$ and $\beta_{\ell,k}^\delta$ two crossings of $\LC_N^\delta$ on the boundary of $C^\delta_{\ell,k}$. 
Denote by $\xi_{\ell,k}$ the boundary condition such that it equals $+1$ along $\alpha_{\ell,k}^\delta$ and $\beta_{\ell,k}^\delta$, and it equals $-1$ along $\partial C^\delta_{\ell,k}\setminus (\alpha_{\ell,k}^\delta\cup\beta_{\ell,k}^\delta)$.
\item
Denote by $\alpha_\ell^\delta$ the first crossing of $\LC_N^\delta$ through $B(x_j^\delta, 2^\ell\eps)\setminus B(x_j^\delta, 2^{\ell-1}\eps)$ and by $\beta_\ell^\delta$ the last crossing. Denote by $C_\ell^\delta$ the connected component of $(B(x_j^\delta, 2^\ell\eps)\setminus B(x_j^\delta, 2^{\ell-1}\eps))\setminus \LC_N^\delta$ which contains $\alpha_\ell^\delta$ and $\beta_\ell^\delta$ on its boundary and contains $C^\delta_{\ell,k}$ for $1\le k\le k_\ell$. Denote by $\xi_{\ell}$ the boundary condition such that it equals $+1$ along $\alpha_{\ell}^\delta$ and $\beta_{\ell}^\delta$, and it equals $-1$ along $\partial C^\delta_{\ell}\setminus (\alpha_{\ell}^\delta\cup\beta_{\ell}^\delta)$.
\end{itemize}
Note that
\begin{align}\label{eqn::RSW_aux1}
&\{\sigma_{j,\eps}^\delta<\infty, \text{ and }\gamma_j^\delta\text{ ends at }(x_{N+1}^\delta x_{N+2}^\delta)\}\notag\\
\subset& \cap_{0\le\ell\le -\frac{1}{2}\log_2\sqrt\eps}\left(\cap_{1\le k\le k_\ell}\{\text{there exists }+1 \text{ crossing between }\alpha_{2\ell,k}^\delta\text{ and }\beta_{2\ell,k}^\delta\text{ on }C^\delta_{2\ell,k}\}\right)^c.
\end{align}
By domain Markov property and monotonicity, we have that
\begin{align}\label{eqn::RSW_aux2}
&\PP[\cap_{0\le\ell\le -\frac{1}{2}\log_2\sqrt\eps}\left(\cap_{1\le k\le k_\ell}\{\text{there exists }+1 \text{ crossing between }\alpha_{2\ell,k}^\delta\text{ and }\beta_{2\ell,k}^\delta\text{ on }C^\delta_{2\ell,k}\}\right)^c]\notag\\
=&\PP\left[\prod_{0\le\ell\le -\frac{1}{2}\log_2\sqrt\eps}\left(1-\prod_{1\le k\le k_\ell}\PP^{\sigma |_{\partial C^\delta_{\ell,k}}}_{C^\delta_{\ell,k}}\left[\text{there exists }+1 \text{ crossing between }\alpha_{2\ell,k}^\delta\text{ and }\beta_{2\ell,k}^\delta\text{ on }C^\delta_{2\ell,k}\right]\right)\right]\notag\\
\le &\PP\left[\prod_{0\le\ell\le -\frac{1}{2}\log_2\sqrt\eps}\left(1-\prod_{1\le k\le k_\ell}\PP^{\xi_{\ell,k}}_{C^\delta_{\ell,k}}\left[\text{there exists }+1 \text{ crossing between }\alpha_{2\ell,k}^\delta\text{ and }\beta_{2\ell,k}^\delta\text{ on }C^\delta_{2\ell,k}\right]\right)\right].
\end{align}
Still by domain Markov property and monotonicity, we have that
\begin{align}\label{eqn::RSW_aux3}
&\prod_{1\le k\le k_\ell}\PP^{\xi_{\ell,k}}_{C^\delta_{\ell,k}}\left[\text{there exists }+1 \text{ crossing between }\alpha_{2\ell,k}^\delta\text{ and }\beta_{2\ell,k}^\delta\text{ on }C^\delta_{2\ell,k}\right]\notag\\
\ge& \PP^{\xi_{\ell}}_{C^\delta_{\ell}}\left[\text{there exists }+1 \text{ crossing between }\alpha_{2\ell}^\delta\text{ and }\beta_{2\ell}^\delta\text{ on }C^\delta_{2\ell}\right]
\end{align}
Combining~\eqref{eqn::RSW_aux1}--\eqref{eqn::RSW_aux3} and Lemma~\ref{lem::RSW} together, we obtain~\eqref{eqn::error_estimate1}. This completes the proof.
\end{proof}
By Lemma~\ref{lem::conv_aux}, we can also obtain the tightness of $\{(\gamma_1^\delta,\ldots,\gamma_N^\delta)\}_{\delta>0}$. Suppose $(\gamma_1,\ldots,\gamma_N)$ is a subsequential limit and suppose $(\gamma_1^{\delta_n},\ldots,\gamma_N^{\delta_n})$ converges to $(\gamma_1,\ldots,\gamma_N)$ in law as $n\to\infty$. By Skorokhod’s representation theorem, we can construct a coupling $\mathcal Q$ of $\{(\gamma_1^{\delta_n},\ldots,\gamma_N^{\delta_n})\}_{n\ge 1}$ and $(\gamma_1,\ldots,\gamma_N)$ such that $\gamma_j^{\delta_n}$ converges to $\gamma_j$ almost surely as $n\to\infty$. 
\begin{lemma}\label{lem::conv_aux2}
Denote by $\sigma$ (resp. $\tau$) the hitting time of $\gamma_N$ at $\{x_1,\ldots, x_{N-1}\}$ (resp. $(x_{N+1}x_{N+2})$). Under $\mathcal Q$, we have that $\one_{\{\gamma_N^{\delta_n}\text{ ends at }(x_{N+1}^\delta x_{N+2}^\delta)\}}$ converges to $\one_{\{\tau<\sigma\}}$ in probability.  Moreover, we have that $\gamma_j$ hits $\partial\Omega$ only at its two ends for $1\le j\le N$ and $\gamma_j$ does not hit $\{x_{N+1},x_{N+2}\}$.
\end{lemma}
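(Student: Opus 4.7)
The plan is to combine the RSW-type uniform estimates in Lemma~\ref{lem::conv_aux1} with the almost sure convergence $\gamma_j^{\delta_n}\to\gamma_j$ under $\mathcal Q$, and to exploit the $\SLE_3$-like description of $\gamma_j$ before time $T$ from Lemma~\ref{lem::conv_aux}. For the non-hitting of $\{x_{N+1},x_{N+2}\}$, I would first observe that for each fixed $\eps>0$ the event $\{\gamma_j\cap(B(x_{N+1},\eps)\cup B(x_{N+2},\eps))\ne\emptyset\}$ is open in $(X,\dist)$. The Portmanteau theorem, combined with the almost sure convergence of curves and marked points together with~\eqref{eqn::error_estimate3}, yields
\[
\PP\bigl[\gamma_j\cap(B(x_{N+1},\eps)\cup B(x_{N+2},\eps))\ne\emptyset\bigr]\le\liminf_{n\to\infty}\PP\bigl[\gamma_j^{\delta_n}\text{ hits }\partial B(x_{N+1}^{\delta_n},\eps)\cup\partial B(x_{N+2}^{\delta_n},\eps)\bigr]\le C_1\eps^{C_2}.
\]
Sending $\eps\downarrow 0$ then gives that $\gamma_j$ almost surely avoids $\{x_{N+1},x_{N+2}\}$.

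For the convergence of indicators, set $F_n:=\{\gamma_N^{\delta_n}\text{ ends at }(x_{N+1}^{\delta_n}x_{N+2}^{\delta_n})\}$ and $F:=\{\tau<\sigma\}$. I would first verify $\PP[\tau=\sigma]=0$: by Lemma~\ref{lem::conv_aux}, up to $\min(\tau,\sigma)$ the law of $\gamma_N$ is locally absolutely continuous with respect to chordal $\SLE_3$ in $\Omega$ from $x_N$ (the drift in~\eqref{eqn::local_driving} is smooth and bounded on compact subsets away from the force points), and for such a process the hitting times of disjoint boundary sets are almost surely distinct. On the event $\{\tau<\sigma\}$, choosing $\eps$ smaller than the separation gap, almost sure convergence forces $\gamma_N^{\delta_n}$ to reach the $\eps$-neighbourhood of $(x_{N+1}^{\delta_n}x_{N+2}^{\delta_n})$ strictly before any $\eps$-neighbourhood of $\{x_1^{\delta_n},\ldots,x_{N-1}^{\delta_n}\}$ for all large $n$, and~\eqref{eqn::error_estimate2} gives $\PP[F_n]\ge 1-C_1\eps^{C_2}$ on this event. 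Symmetrically, on $\{\sigma<\tau\}$ the estimate~\eqref{eqn::error_estimate1} yields $\PP[F_n]\le C_1\eps^{C_2}$. A diagonal argument in $\eps\downarrow 0$ and $n\to\infty$ produces $\one_{F_n}\to\one_F$ in probability.

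Finally, for the boundary visits of $\gamma_j$, I would use Lemma~\ref{lem::conv_aux}: up to time $T$ the curve $\gamma_j$ is, modulo a Radon--Nikodym weight, chordal $\SLE_3$ in $\Omega$ from $x_j$. Since $\kappa=3<4$, $\SLE_\kappa$ is simple and only meets $\partial\Omega$ at its starting point before time $T$, so $\gamma_j[0,T)\cap\partial\Omega=\{x_j\}$ almost surely. At time $T$ the curve lands on $\partial\Omega$ and the discrete interpolant has already terminated, so the limit curve terminates there as well; combined with the non-hitting of $\{x_{N+1},x_{N+2}\}$, this shows that $\gamma_j$ meets $\partial\Omega$ exactly at $x_j$ and at its terminal point.

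The main obstacle is the second step: the discrete and continuum curves are close only up to reparametrisation, so translating the topological discrete event $F_n$ into the continuum ordering $\{\tau<\sigma\}$ requires careful use of open-neighbourhood Portmanteau inequalities, exploiting the polynomial decay $\eps^{C_2}$ in~\eqref{eqn::error_estimate1} and~\eqref{eqn::error_estimate2} uniformly in $n$ to rule out boundary pathologies in the limit.
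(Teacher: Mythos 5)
Your treatment of the first claim (convergence of the indicators) and of the non-hitting of $\{x_{N+1},x_{N+2}\}$ is essentially the paper's argument: both rest on the uniform polynomial estimates \eqref{eqn::error_estimate1}--\eqref{eqn::error_estimate3} of Lemma~\ref{lem::conv_aux1} combined with the almost sure convergence under $\mathcal Q$, and your set-inclusion/Portmanteau bookkeeping is the same as the paper's two displayed inclusions. (Your auxiliary step $\PP[\tau=\sigma]=0$ is not needed for this, and its justification is off: chordal $\SLE_3$ avoids $\partial\Omega$ entirely, so under that comparison both hitting times would be infinite and hence \emph{equal}; what actually saves you is that the two target sets are disjoint compacta, so $\tau=\sigma$ can only occur with both infinite, a case already covered by the inclusions.)

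The genuine gap is in your last paragraph, the claim that $\gamma_j$ meets $\partial\Omega$ only at its two ends. Local absolute continuity with respect to chordal $\SLE_3$ cannot deliver this near the free arc: the drift in \eqref{eqn::local_driving} is singular there, and the limiting behaviour near $(x_{N+1}x_{N+2})$ is of $\SLE_3\left(-\tfrac32,-\tfrac32\right)$ type with $\rho=-\tfrac32<\tfrac{\kappa}{2}-2$, i.e.\ the curve is \emph{attracted to} and does hit that arc --- that is the whole point of the lemma. What must be ruled out is that the limit curve touches the free arc (or any boundary arc) at an interior time and then continues; the SDE description simply stops at $T$ and says nothing about the discrete curves after the limit curve's first touching. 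Your sentence ``the discrete interpolant has already terminated, so the limit curve terminates there as well'' assumes exactly what has to be proved: the discrete curve need not have terminated when the limit curve first touches $\partial\Omega$, since it may narrowly miss the boundary and wander on. The correct fix is the one the paper uses (``by almost the same argument''): apply the estimate \eqref{eqn::error_estimate2} --- which you already invoked for the indicator convergence --- to every $j$ and to all near-boundary approaches, so that with probability $1-O(\eps^{C_2})$ the discrete curve, once within $\eps$ of the boundary, terminates before travelling distance $\sqrt\eps$; passing to the limit then shows that any boundary touching of $\gamma_j$ is a terminal one. With that substitution the proof is complete; as written, the third claim is not established.
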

\begin{proof}
We will use the same notations as in Lemma~\ref{lem::conv_aux1}. Choose $\eps$ small enough. We have the following relations
\begin{align*}
&\{d(\gamma_N^{\delta_n},\gamma_N)\le\eps\}\cap\left(\{\gamma_N^{\delta_n}\text{ ends at }(x_{N+1}^\delta x_{N+2}^{\delta_n})\}\setminus\{\tau<\sigma\}\right)\\
\subset&\{\sigma_{j,\eps}^{\delta_n}<\infty, \text{ and }\gamma_j^{\delta_n}\text{ ends at }(x_{N+1}^{\delta_n}x_{N+2}^{\delta_n})\}\cup\{\gamma_N^{\delta_n}\text{ hits }\partial B(x_{N+1}^\delta,\eps)\cup \partial B(x_{N+1}^\delta,\eps)\}
\end{align*}
and
\begin{align*}
&\{d(\gamma_N^{\delta_n},\gamma_N)\le\eps\}\cap\left(\{\tau<\sigma\}\setminus(\{\gamma_N^{\delta_n}\text{ ends at }(x_{N+1}^{\delta_n}x_{N+2}^{\delta_n})\}\right)\\
\subset &\{\tau_{N,\eps}^{\delta_n}<\infty,\text{ and then }\gamma_N^{\delta_n}\text{ hits }\partial B(\gamma_N^{\delta_n}(\tau_{N,\eps}^{\delta_n}),\sqrt\eps)\text{ before hitting }(x_{N+1}^{\delta_n}x_{N+2}^{\delta_n})\}
\\
&\cup\{\gamma_N^{\delta_n}\text{ hits }\partial B(x_{N+1}^\delta,\eps)\cup \partial B(x_{N+1}^\delta,\eps)\}.
\end{align*}
Combining with Lemma~\ref{lem::conv_aux1}, we have that 
\begin{equation}\label{eqn::diff_aux}
\mathcal Q\left[\one_{\{d(\gamma_N^{\delta_n},\gamma_N)\le\eps\}}\left|\one_{\{\gamma_N^{\delta_n}\text{ ends at }(x_{N+1}^\delta x_{N+2}^{\delta_n})\}}-\one_{\{\tau<\sigma\}}\right|\right]\le C_1\eps^{C_2}.
\end{equation}
By choosing $n$ large enough, we have that
\begin{equation}\label{eqn::dis_aux}
\mathcal Q[d(\gamma_N^{\delta_n},\gamma_N)\ge\eps]<\eps.
\end{equation}
Combining~\eqref{eqn::diff_aux} and~\eqref{eqn::dis_aux} together, we have that $\one_{\{\gamma_N^{\delta_n}\text{ ends at }(x_{N+1}^\delta x_{N+2}^\delta)\}}$ converges to $\one_{\{\tau<\sigma\}}$ in probability.  By almost the same argument, we can prove that $\gamma_j$ hits $\partial\Omega$ only at its two ends for $1\le j\le N$. This completes the proof.
\end{proof}
Before the proof of Theorem~\ref{thm::Ising_limit}, we derive some useful properties of functions $\LR_N$ and $\mathcal Z_N$.
\begin{lemma}\label{lem::mart_gammaN}
FIx $N\ge 1$. The function $\LR_N$ satisfies the PDE systems: for $1\le i\le N$, we have that ($\kappa=3$ and $h=\frac{1}{2}$)
\begin{equation}\label{eqn::BPZ}
\left(\frac{\kappa}{2}\partial_{ii}^2+\sum_{\substack{1\le j\le N+1\\j\neq i}}\frac{2}{y_j-y_i}\partial_j-\sum_{\substack{1\le j\le N\\j\neq i}}\frac{2h}{(y_j-y_i)^2}-\frac{1}{8}\frac{1}{(y_{N+1}-y_i)^2}\right)R=0
\end{equation}
Moreover, let $y_j:=\phi(x_j)$ in Lemma~\ref{lem::conv_aux} for $1\le j\le N$. Suppose $\ell$ is $\SLE_\kappa\left(2,\ldots,2;\frac{\kappa-6}{2}\right)$  from $y_N$ to $\infty$ with force points $x_1^L=y_{N-1},\ldots,x_{N-1}^L=y_1$ on its left and $x_1^R=y_{N+1}$ on its right. Denote by $(W^{\ell}_t:t\ge 0)$ the driving function of $\ell$ and by $(g_t^\ell:T\ge 0)$ the corresponding conformal maps. Define 
\begin{align*}
M_t:=&\frac{\prod_{1\le i<j\le N-1}(g_t^\ell(y_j)-g_t^\ell(y_i))^{-\frac{2}{3}}\prod_{1\le i\le N-1}(g_t^\ell(y_{N+1})-g_t^\ell(y_i))^{\frac{1}{2}}}{\prod_{1\le i<j\le N-1}(y_j-y_i)^{-\frac{2}{3}}\prod_{1\le i\le N-1}(y_{N+1}-y_i)^{\frac{1}{2}}}\\
&\times \frac{(g_t^\ell(y_{N+1})-W_t^\ell)^{\frac{1}{2}}\times\prod_{1\le i\le N-1}(W_t^\ell-g_t^\ell(y_i))^{-\frac{2}{3}}}{(y_{N+1}-y_N)^{\frac{1}{2}}\times\prod_{1\le i\le N-1}(y_N-y_i)^{-\frac{2}{3}}}\times\frac{\LR_N(g_t^\ell(y_1),\ldots, g_t^\ell(y_{N-1}), W_t^\ell; g_t^\ell(y_{N+1}))}{\LR_N(y_1,\ldots,y_N;y_{N+1})}.
\end{align*}
Then, we have that $\{M_t\}_{t\ge 0}$ is a local martingale for $\ell$. For $\eps>0$ and $K>0$, define $\tau_\eps$ to be the hitting time of $\ell$ at the union of $\eps$-neighbourhood of $(y_{N+1},+\infty)$ and $\partial B(y_N,\frac{1}{\eps})$, define $L_K$ to be the first time that $M_t\ge K$. Then,  the law of $\ell$ weighted by $\left\{M_{t\wedge\tau_\eps\wedge L_K}\right\}_{t\ge 0}$ equals the law of $\gamma_N$ in Lemma~\ref{lem::conv_aux} stopped at corresponding stopping time.
\end{lemma}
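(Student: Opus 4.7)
The plan is to address the three claims in order: first the BPZ-type PDE~\eqref{eqn::BPZ} for $\LR_N$, then the local martingale property of $M_t$ along $\ell$, and finally the measure-change identification of the weighted law with $\gamma_N$.

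For~\eqref{eqn::BPZ}, the function $\LR_N$ is the Pfaffian-type scaling-limit Ising partition function. Its prefactor $\prod_{k=1}^N(y_{N+1}-y_k)^{-1/2}$ carries the conformal weight $h_{1,2}=1/2$ at each $y_i$ (which is precisely $(6-\kappa)/(2\kappa)\cdot\kappa = 1/2$ at $\kappa=3$), while the normalization at $y_{N+1}$ corresponds to the free/fixed boundary condition changing operator of weight $1/16$; this matches the $-\tfrac{1}{8}(y_{N+1}-y_i)^{-2}$ term in~\eqref{eqn::BPZ}. I would verify~\eqref{eqn::BPZ} by direct computation: apply $\partial_{ii}^2$ to $\LR_N$, split the result into (a) pure prefactor contributions, which produce the $\sum_{j\neq i}2h/(y_j-y_i)^2$ and $(y_{N+1}-y_i)^{-2}/8$ subtractions, and (b) sum-over-partitions contributions, which simplify by the antisymmetry identities $\sum_{\omega\in\Pi_n}\mathrm{sgn}(\omega)\cdot\partial_i\prod \frac{2y_{N+1}-y_a-y_b}{y_b-y_a}$ reducing through cofactor-type expansions to smaller Pfaffian sums that cancel against the transport terms $\sum_{j\neq i}\frac{2}{y_j-y_i}\partial_j \LR_N$. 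Alternatively, this PDE is essentially the one recorded in~\cite{FengWuYangIsing} for $\LR_N$ and can be invoked directly.

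For the local martingale claim, the strategy is to apply It\^o's formula to $\log M_t$. Along the $\SLE_\kappa(2,\ldots,2;(\kappa-6)/2)$ dynamics, $W_t^\ell$ has drift $\sum_{i\le N-1}\frac{2}{W_t^\ell-g_t^\ell(y_i)}-\frac{3/2}{W_t^\ell-g_t^\ell(y_{N+1})}$ and quadratic variation $3\,dt$, while $g_t^\ell(y_j)$ evolves by Loewner. Writing $M_t$ as (i) power-law prefactors in $g_t^\ell(y_i)$, $g_t^\ell(y_{N+1})$, $W_t^\ell$ and (ii) the ratio of $\LR_N$ values, the drift of $\log M_t$ splits into three pieces: the prefactor contribution (whose derivatives in $y_i$, combined with the Loewner transport, produce exactly the $\sum_{j\neq i}2h/(y_j-y_i)^2$ subtractions and cancel the explicit force-point drifts in $W_t^\ell$), the diffusive piece $\tfrac{\kappa}{2}\partial_W^2\log\LR_N$, and the Loewner transport $\sum_{j}\frac{2}{g_t^\ell(y_j)-W_t^\ell}\partial_j\LR_N/\LR_N$. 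Summing these, one recognises the BPZ operator at $i=N$ applied to $\LR_N$, which vanishes by step one. Positivity of $M_t$ is manifest from the explicit form, so $M_t$ is a positive local martingale.

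For the measure-change statement, the stopped process $M_{\cdot\wedge\tau_\eps\wedge L_K}$ is bounded (and bounded away from $0$ on the stopping set by the explicit prefactor estimates plus the continuity of $\LR_N$ away from collisions), hence a true martingale. Girsanov's theorem implies that the weighted law of $\ell$ shifts the drift of $W^\ell$ by $\kappa\,\partial_W\log M_t\,dt$. The prefactor contributions to this shift exactly cancel the force-point drifts originally present in $W^\ell$, leaving only the $\LR_N$-derivative term, which by construction matches the drift appearing in~\eqref{eqn::local_driving}. Strong uniqueness for this SDE (before the relevant stopping time) then identifies the weighted law with the law of $\gamma_N$ stopped at the corresponding time. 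The main obstacle is the verification of~\eqref{eqn::BPZ}: the combinatorics of $\Pi_n$ and the interaction between the prefactor and the sign-weighted Pfaffian sum require careful bookkeeping, whereas steps two and three are essentially mechanical It\^o and Girsanov computations once the BPZ equation is available.
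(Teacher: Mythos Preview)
Your steps two and three---the It\^o computation for the local martingale and the Girsanov identification of the weighted law---are exactly what the paper does, so those parts are fine.

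The real divergence is in how you establish~\eqref{eqn::BPZ}. You propose a direct combinatorial verification on the explicit Pfaffian-type formula for $\LR_N$, splitting into prefactor contributions and sign-weighted sum-over-partitions contributions and hoping for cofactor-type cancellations. The paper takes a completely different, probabilistic route: it first shows (via the discrete convergence from~\cite{IzyurovObservableFree} applied in the domain $\Omega_T$ obtained by slitting along initial segments of all the curves) that the scaling-limit curves $(\gamma_1,\ldots,\gamma_N)$ satisfy the commutation relations of~\cite{DubedatCommutationSLE}. Dub\'edat's Theorem~7 then forces $\LR_N$ to satisfy a BPZ equation of the form~\eqref{eqn::BPZ} but with the coefficient at the $y_{N+1}$-singularity replaced by an \emph{a priori} unknown constant $-2\mu$. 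The paper pins down $\mu=1/16$ by a short asymptotic computation: as $y_N\to y_{N+1}$, both $(y_{N+1}-y_N)^2\partial_{NN}^2\LR_N$ and $(y_{N+1}-y_N)\partial_{N+1}\LR_N$ reduce to explicit multiples of $\LR_{N-1}$, which is nonzero by~\cite{IzyurovObservableFree}, and substituting back fixes $\mu$.

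Your direct route should work in principle, but you yourself flag the Pfaffian combinatorics as the ``main obstacle'' and leave it unresolved; the paper's route sidesteps that entirely, trading the combinatorics for reliance on the commutation-relation machinery and the prior convergence results already developed in the section. Your fallback of citing~\cite{FengWuYangIsing} for the PDE is not obviously safe: that reference is invoked here for the definition of $\LR_N$ and the convergence statement, not for the BPZ equation with this particular boundary weight.
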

\begin{proof}
The proof follows from the commutation relations for $\SLE$s~\cite{DubedatCommutationSLE}. 
We  first prove that  curves $(\gamma_1,\ldots,\gamma_N)\sim\mathcal Q$ satisfy the following commutation relation: Take any disjoint neighborhoods $U_1,\ldots, U_N$ such that $U_j$ contains $x_j$ on its boundary and $U_j\cap\left(\{x_1,\ldots,x_{j-1},x_{j+1},\ldots, x_{N}\}\cup [x_{N+1}x_{N+2}]\right)=\emptyset$ for $1\le j\le N$. Denote by $T_j$ the hitting time of $\gamma_j$ at $\overline{\partial U_j\cap\Omega}$. Denote by $\phi_T$ the conformal map from $\Omega_T:=\Omega\setminus\cup_{1\le i\le N}\gamma_i[0,T_i]$ onto $\HH$ such that $\phi_T(x_{N+2})=\infty$ and $\phi_T(\gamma_1(T_1))<\ldots<\phi_T(\gamma_N(T_N))<\phi_T(x_{N+1})$. Denote by $\tau_j$ the hitting time of $\gamma_j$ at $\cup_{i\neq j}\gamma_i[0,T_i]\cup (x_{N+1}x_{N+2})$. Then, the driving function of $\phi_T(\gamma_j[T_j,\tau_j))$ satisfies SDE~\eqref{eqn::local_driving} if we  replace $(\phi(x_1),\ldots,\phi(x_{N+1}))$ by $(\phi_T(\gamma_1(T_1)),\ldots,\phi_T(x_{N+1}))$.

Choose stopping times $T_j^\delta$ for $\gamma_j^{\delta_n}$ such that $T_j^\delta$ converges to $T_j$ almost surely under $\mathcal Q$. Then, we have that $\Omega_{T}^{\delta_n}:=\Omega_{\delta_n}\setminus\cup_{1\le i\le N}\gamma_i^{\delta_n}[0,T_i^{\delta_n}]$ converges to $\Omega_T$ in the Carath\'eodory sense almost surely. Choose conformal map $\phi^{\delta_n}_T$ from $\Omega_T^{\delta_n}$ onto $\HH$, such that $\phi^{\delta_n}_T(x^{\delta_n}_{N+2})=\infty$ and $\phi^{\delta_n}_T(\gamma^{\delta_n}_1(T^{\delta_n}_1))<\ldots<\phi^{\delta_n}_T(\gamma^{\delta_n}_N(T^{\delta_n}_N))<\phi^{\delta_n}_T(x^{\delta_n}_{N+1})$ and $\phi^{\delta_n}_T$ converges to $\phi_T$ locally uniformly. Denote by $\tau^{\delta_n}_j$ the hitting time of $\gamma^{\delta_n}_j$ at $\cup_{i\neq j}\gamma^{\delta_n}_i[0,T^{\delta_n}_i]\cup[x^{\delta_n}_{N+1}x^{\delta_n}_{N+2}]$.   
\begin{itemize}
\item
By~\cite[Theorem 3.1]{IzyurovObservableFree}, we have that the driving function of $\phi_T^{\delta_n}(\gamma_j^{\delta_n}[T_j^{\delta_n},\tau_j^{\delta_n}))$ converges the solution of SDE~\eqref{eqn::local_driving} in law if we  replace $(\phi(x_1),\ldots,\phi(x_{N+1}))$ by $(\phi_T(\gamma_1(T_1)),\ldots,\phi_T(x_{N+1}))$. 
\item
By the convergence of $\gamma_i^{\delta_n}$ for $1\le i\le N$, and the fact that the limit $\gamma_i$ is simple (due to Lemma~\ref{lem::conv_aux} and Lemma~\ref{lem::conv_aux2}), we have that the limit of the driving function of $\phi_T^{\delta_n}(\gamma_j^{\delta_n}[T_j^{\delta_n},\tau_j^{\delta_n}))$ equals the driving function of $\phi(\gamma_j[T_j,\tau_j))$ almost surely.
\end{itemize}
Combining these two facts together, we have that the driving function of $\phi(\gamma_j[T_j,\tau_j))$ satisfies the required SDE (See also~\cite[Remark 3.2]{IzyurovObservableFree}).
By~\cite[Theorem 7]{DubedatCommutationSLE}, there exists a constant $\mu\in\R$, such that on $\Xi_{N+1}$, we have
\begin{equation}\label{eqn::BPZ_aux}
\left(\frac{\kappa}{2}\partial_{ii}^2+\sum_{\substack{1\le j\le N+1\\j\neq i}}\frac{2}{y_j-y_i}\partial_j-\sum_{\substack{1\le j\le N\\j\neq i}}\frac{2h}{(y_j-y_i)^2}-\frac{2\mu}{(y_{N+1}-y_i)^2}\right)R=0
\end{equation}
By definition of $\LR_N$, we have 
\[\lim_{y_N\to y_{N+1}}(y_{N+1}-y_N)^2\partial^2_{NN}\LR_N(y_1,\ldots,y_N;y_{N+1})=\frac{3}{4}\LR_{N-1}(y_1,\ldots,y_{N-1};y_N)\]
and
\[\lim_{y_N\to y_{N+1}}(y_{N+1}-y_N)\partial_{N+1}\LR_N(y_1,\ldots,y_N;y_{N+1})=-\frac{1}{2}\LR_{N-1}(y_1,\ldots,y_{N-1};y_N).\]
By~\cite[Remark 2.5]{IzyurovObservableFree}, we have that $\LR_{N-1}(y_1,\ldots,y_{N-1};y_N)\neq 0$. Plugging into~\eqref{eqn::BPZ_aux}, we obtain $\mu=\frac{1}{16}$. This proves~\eqref{eqn::BPZ}.

By I\'to's formula, we can conclude that $\{M_t\}_{t\ge 0}$ is a local martingale for $\ell$. The last conclusion is due to the Girsanov's theorem. This completes the proof.
\end{proof}
\begin{lemma}\label{lem::bound_control}
For $K>0$, define
\[\Xi_{N+1,K}:=\left\{(y_1,\ldots,y_{N+1})\in\Xi_{N+1}: |y_i|\le K\text{ for }1\le i\le N+1\text{ and }|y_{i}-y_{i-1}|\ge\frac{1}{K}\text{ for }2\le i\le N\right\}.\]
Then, we have the following properties.
\begin{itemize}
\item
We have that $\LR_N>0$ on $\Xi_{N+1}$.
\item
 There exists a constant $C=C(K)>0$ such that
\[\sup_{(y_1,\ldots,y_{N+1})\in\Xi_{N+1,K}}\frac{\mathcal Z_N(y_1,\ldots,y_N;y_{N+1})}{\prod_{1\le i<j\le N}(y_j-y_i)^{-\frac{2}{3}}\prod_{1\le i\le N}(y_{N+1}-y_i)^{\frac{1}{2}}\times\LR_N(y_1,\ldots,y_N;y_{N+1})}\le C(K).\]
\end{itemize}
\end{lemma}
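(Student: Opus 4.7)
The plan is to prove positivity of $\LR_N$ by induction on $N$ and then derive the bound in item~(2) by combining the integral estimate from the proof of Lemma~\ref{lem::cascade_eta} with a boundary-extension argument.

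For item~(1), the base cases $N=1,2$ are immediate from the closed forms $\LR_1(y_1;y_2)=(y_2-y_1)^{-1/2}>0$ and $\LR_2(y_1,y_2;y_3)=(2y_3-y_1-y_2)/[(y_2-y_1)\sqrt{(y_3-y_1)(y_3-y_2)}]>0$ on $\Xi_{N+1}$. For the inductive step, $\LR_N$ is continuous on the connected open set $\Xi_{N+1}$ and non-vanishing by~\cite[Remark 2.5]{IzyurovObservableFree}, so it has constant sign. To pin the sign, I would inspect the fusion limit $y_N\to y_{N+1}$ in the sum-over-pair-partitions formula: every pair containing $N$ contributes a factor tending to $1$, while the leading singular behaviour comes only from the prefactor $\prod_k(y_{N+1}-y_k)^{-1/2}$, yielding
\[
\lim_{y_N\to y_{N+1}}\sqrt{y_{N+1}-y_N}\,\LR_N(y_1,\ldots,y_N;y_{N+1})=\LR_{N-1}(y_1,\ldots,y_{N-1};y_{N+1})
\]
(a short direct calculation, easily verified for small $N$ and propagating by the same cancellation pattern). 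By the induction hypothesis the right-hand side is strictly positive, so $\LR_N>0$ near the fusion boundary, and constant sign propagates this to all of $\Xi_{N+1}$.

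For item~(2), I would bound numerator and denominator separately on $\Xi_{N+1,K}$. The upper bound $\mathcal Z_N\le C_1(K)$ comes from reusing the chain of inequalities leading to~\eqref{eqn::bound_U_1} in the proof of Lemma~\ref{lem::cascade_eta}, applied at $t=0$: using $z_j-y_i\ge z_j-y_{N-1}$ for $i\le N-1$ and $z_j-y_N\le z_j-y_{N-1}$ gives
\[
\mathcal Z_N(\bs y;y_{N+1})\le(y_{N+1}-y_{N-1})^{\frac{4}{\kappa}M(M-N+\frac{1}{2})}\cdot I_{N,\kappa},
\]
with $I_{N,\kappa}$ a finite Selberg-type constant and a strictly negative exponent, and the spacing condition $y_{N+1}-y_{N-1}\ge 1/K$ on $\Xi_{N+1,K}$ does the rest. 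The lower bound on the denominator is more subtle because $\Xi_{N+1,K}$ is not compact (the gap $y_{N+1}-y_N$ is unconstrained), but the fusion asymptotic of item~(1) implies exactly that the combination $(y_{N+1}-y_N)^{1/2}\LR_N$ extends continuously and strictly positively to the closure $\overline{\Xi_{N+1,K}}$, while the remaining geometric factors $\prod_{1\le i<j\le N}(y_j-y_i)^{-2/3}\prod_{1\le i<N}(y_{N+1}-y_i)^{1/2}$ are pinched between positive constants by the spacing condition. Hence the extended denominator is continuous and strictly positive on the compact set $\overline{\Xi_{N+1,K}}$, giving a uniform lower bound $C_2(K)>0$, and $C(K)=C_1(K)/C_2(K)$ finishes the proof.

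The main obstacle is the fusion-limit computation, which has to do double duty: it fixes the sign in item~(1) and supplies the continuous positive extension needed for item~(2). Once the asymptotic is available, both conclusions package cleanly from ingredients already established in Sections~\ref{sec::pre} and~\ref{sec::general_ensemble}.
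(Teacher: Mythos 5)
Your proposal is correct and follows essentially the same route as the paper: positivity of $\LR_N$ by induction via the fusion limit $(y_{N+1}-y_N)^{1/2}\LR_N\to\LR_{N-1}$ combined with the non-vanishing from \cite[Remark 2.5]{IzyurovObservableFree}, and for the second item an upper bound on $\mathcal Z_N$ as in~\eqref{eqn::bound_U_1} together with a compactness argument (the paper runs it as a minimizing sequence with the two cases $\hat y_N\neq\hat y_{N+1}$ and $\hat y_N=\hat y_{N+1}$, which is the same as your continuous positive extension to the closure). Your explicit remark that the geometric prefactor is bounded \emph{below} on $\Xi_{N+1,K}$ is in fact the correct reading of the step the paper states somewhat loosely.
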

\begin{proof}
For the first item, we can prove by induction. Suppose it holds when $N\le n-1$, when $N=n$, we have
\[\lim_{n\to\infty}(y_{n}-y_{n-1})^{\frac{1}{2}}\LR_n(y_1,\ldots,y_{n-1};y_{n})=\LR_{n-1}( y_1,\ldots,\hat y_{n-2};\hat y_{n-1})>0.\]
By~\cite[Remark 2.5]{IzyurovObservableFree}, we have that $\LR_n$ is non-zero on $\Xi_{N+1}$. This implies that $\LR_n$ is positive on $\Xi_{N+1}$ and completes the proof of the first item.

For the second item, note that $\prod_{1\le i<j\le N}(y_j-y_i)^{-\frac{2}{3}}\prod_{1\le i\le N-1}(y_{N+1}-y_i)^{\frac{1}{2}}$ is uniformly bounded from above on $\Xi_{N+1,K}$. Similar to~\eqref{eqn::bound_U_1}, we have that  $\mathcal Z_N$ is uniformly bounded from above on $\Xi_{N+1,K}$.
Suppose $\{(y^n_1,\ldots,y^n_{N+1})\}_{n\ge 1}$ is a sequence of points such that 
\[
\lim_{n\to\infty}(y_{N+1}-y_N)^{\frac{1}{2}}\LR_N(y^n_1,\ldots,y^n_N;y^n_{N+1})=\inf_{(y_1,\ldots,y_{N+1})\in\Xi_{N+1,K}}(y_{N+1}-y_N)^{\frac{1}{2}}\LR_N(y_1,\ldots,y_N;y_{N+1}).
\]
We may assume that $\lim_{n\to\infty}y_j^n=\hat y_j$ for $1\le j\le N+1$.
\begin{itemize}
\item
If $\hat y_N\neq \hat y_{N+1}$, we have
\[\lim_{n\to\infty}(y_{N+1}-y_N)^{\frac{1}{2}}\LR_N(y^n_1,\ldots,y^n_N;y^n_{N+1})=(\hat y_{N+1}-\hat y_N)^{\frac{1}{2}}\LR_N(\hat y_1,\ldots,\hat y_N;\hat y_{N+1}),\]
which is nonzero by~\cite[Remark 2.5]{IzyurovObservableFree}.
\item
If $\hat y_N=\hat y_{N+1}$, we have
\[\lim_{n\to\infty}(y_{N+1}-y_N)^{\frac{1}{2}}\LR_N(y^n_1,\ldots,y^n_N;y^n_{N+1})=\LR_{N-1}(\hat y_1,\ldots,\hat y_{N-1};\hat y_{N}),\]
which is nonzero by~\cite[Remark 2.5]{IzyurovObservableFree}.
\end{itemize}
This completes the proof.
\end{proof}
Now, we come back to the proof of Theorem~\ref{thm::Ising_limit}. As in Lemma~\ref{lem::mart_gammaN}, denote by $y_j=\phi(x_j)$ for $1\le j\le N$. By Lemma~\ref{lem::curve_properties}, the second line and the third line in~\eqref{eqn::limt_prob} are equal to each other, which we denote by $F_N(y_1,\ldots,y_{N};y_{N+1})$.
\begin{proof}[Proof of Theorem~\ref{thm::Ising_limit}]
We will prove by induction. 

We first consider the case $N=1$. By Lemma~\ref{lem::conv_aux} and Lemma~\ref{lem::conv_aux1}, we have that $\gamma_1^\delta$ converges to $\SLE_3\left(-\frac{3}{2},-\frac{3}{2}\right)$. Due to the explicit form of $\mathcal Z_1,\mathcal W_1$ and $\LR_1$, the second line and the third line in~\eqref{eqn::limt_prob} equals $1$. Thus, Theorem~\ref{thm::Ising_limit} holds when $N=1$.

Next, suppose that Theorem~\ref{thm::Ising_limit} holds for $N\le m-1$. By tightness, It suffices to prove Theorem~\ref{thm::Ising_limit} when $N=m$ along a subsequence. By Lemma~\ref{lem::conv_aux2}, we can choose a subsequence, which we still denote by $\{\delta_n\}_{n\ge 1}$, such that $\one_{\{\gamma_N^{\delta_n}\text{ ends at }(x_{N+1}^\delta,x_{N+2}^\delta)\}}$ converges to $\one_{\{\tau<\sigma\}}$ almost surely under $\mathcal Q$. 

We first prove~\eqref{eqn::limt_prob} and derive the law of $\gamma_N$.
Denote by $\hat\Omega_{\delta_n}$ (resp. $\hat\Omega$) the connected component of $\Omega_{\delta}\setminus\gamma_N^{\delta_n}$ (resp. $\Omega\setminus\gamma_N$) which contains $x_1^{\delta_n}$ (resp. $x_1$). Then, we have that $\hat\Omega_{\delta_n}$ converges to $\hat\Omega$ in the close-Carath\'eodory sense. Denote by $\tau_{\delta_n}$ the hitting time of $\gamma_N^{\delta_n}$ at $(x_{N+1}^{\delta_n}x_{N+2}^{\delta_n})$. Suppose $f$ is a continuous and bounded function on curve space, by domain Markov property, we have
\[\E[\one_{A_{\delta_n}}f(\gamma_N^{\delta_n})]=\E\left[\E[A_{\delta_n}(\hat\Omega_{\delta_n};x_1^{\delta_n},\ldots,x_{N-1}^{\delta_n};\gamma_N^{\delta_n}(\tau_{\delta_n}),x_{N+2}^{\delta_n})]f(\gamma_N^{\delta_n})\one_{\{\gamma_N^{\delta_n}\text{ ends at }(x_{N+1}^\delta x_{N+2}^\delta)\}}\right]\]
 Denote by $(W_t:t\ge 0)$ the driving function of $\phi(\gamma_N)$ and by $(g_t:t\ge 0)$ the corresponding conformal maps. By induction hypothesis, we have that 
\begin{equation}\label{eqn::final_aux1}
\lim_{n\to\infty}\E[\one_{A_{\delta_n}}f(\gamma_N^{\delta_n})]=\mathcal Q\left[F_{N-1}(g_\tau(y_1),\ldots, g_\tau(y_{N-1}; W_\tau))f(\gamma_N)\one_{\{\tau<\sigma\}}\right].
\end{equation}
By Lemma~\ref{lem::cascade_eta} and Lemma~\ref{lem::mart_gammaN}, we have that $\{N_t:=F_N(g_t(y_1),\ldots, g_t(y_{N-1}),W_t; g_t(y_{N+1}))\}_{t\ge 0}$ is a local martingale for $\gamma_N$. Choose $K$ large enough. Recall the definition of $\Xi_{N+1,K}$ in Lemma~\ref{lem::bound_control}. Define $S_K$ the exiting time of $(g_t(y_1),\ldots, g_t(y_{N-1}),W_t; g_t(y_{N+1}))$ at $\Xi_{N+1,K}$. Denote by $\tau_\eps$ the hitting time of $\phi(\gamma_N)$ at the union of $\eps$-neighbourhood of $(y_{N+1},+\infty)$ and $\partial B(y_N,\frac{1}{\eps})$ and by $\sigma_\eps$ the hitting time of $\phi(\gamma_N)$ at $\eps$-neighbourhood of $\{y_1,\ldots,y_{N-1}\}$.
By Lemma~\ref{lem::bound_control}, there exists a constant $C(K)>0$ such that for every $t\ge 0$, we have 
\[N_{t\wedge\tau_\eps\wedge\sigma_\eps\wedge S_K}\le C(K).\]
Moreover, weighted by $\left\{\frac{N_{t\wedge\tau_\eps\wedge\sigma_\eps\wedge S_K}}{N_0}\right\}_{t\ge 0}$, the law of $\gamma_N[0,\tau_\eps\wedge\sigma_\eps\wedge S_K]$ equals the law of $\ell_N[0,\tau^{\ell_N}_\eps\wedge\sigma^{\ell_N}_\eps\wedge S^{\ell_N}_K]$ (See Theorem~\ref{thm::multiple_odd} for the definition of $\ell_N$), where we denote by $\tau^{\ell_N}_\eps,\sigma^{\ell_N}_\eps,S^{\ell_N}_K$ for $\ell_N$ similarly to $\tau_\eps, \sigma_\eps, S_K$. Thus, we have
\[
\mathcal Q[N_{\tau_\eps\wedge\sigma_\eps\wedge S_K}\one_{\{\sigma_\eps\wedge S_K<\tau_\eps\}}]=N_0\PP[\sigma^{\ell_N}_\eps\wedge S^{\ell_N}_K<\tau^{\ell_N}_\eps]
\]
In particular, since $\ell_N$ is a simple curve which hits $(y_{N+1},+\infty)$ before hitting $(-\infty, y_N)$, we have
\begin{equation}\label{eqn::mart_gammaN_aux1}
\lim_{\eps\to 0, K\to\infty}\mathcal Q[N_{\tau_\eps\wedge\sigma_\eps\wedge S_K}\one_{\{\sigma_\eps\wedge S_K<\tau_\eps\}}]\le N_0\PP[\ell_N\text{ hits }\{y_1,\ldots, y_{N-1}\}]=0.
\end{equation}
On the event $\{\tau<\sigma\wedge S_K\}$, we have that 
\begin{align}\label{eqn::mart_gammaN_aux2}
&\lim_{\eps\to 0}N_{\tau_\eps}\notag\\
=&\frac{1}{B\left(\frac{2}{\kappa},\frac{2}{\kappa}\right)^{\left[\frac{N}{2}\right]}}\prod_{1\le i<j\le N-1}(g_\tau(y_j)-g_\tau(y_i))^{\frac{2}{3}}\prod_{1\le i\le N-1}(W_\tau-g_\tau(x_i))^{-\frac{1}{2}}\times\frac{\mathcal W_{N-1}(g_\tau(y_1),\ldots, g_\tau(y_{N-1});W_\tau)}{\LR_{N-1}(g_\tau(y_1),\ldots,g_\tau(y_{N-1});W-\tau)}\notag\\
=&F_{N-1}(g_\tau(y_1),\ldots, g_\tau(y_{N-1}; W_\tau)).
\end{align}
Denote the limit by $N_\tau$. Combining~\eqref{eqn::mart_gammaN_aux1} and~\eqref{eqn::mart_gammaN_aux2} together,  by letting $\eps\to 0$,
and $K\to \infty$, by dominated convergence theorem, we have
\begin{equation}\label{eqn::mart_gammaN_aux4}
\mathcal Q[f(\gamma_N[0,\tau])N_{\tau}\one_{\{\tau<\sigma\}}]=N_0\E\left[f(\ell_N[0,\tau^{\ell}_N])\right].
\end{equation}
By taking $f=1$ in~\eqref{eqn::final_aux1}, combining with~\eqref{eqn::mart_gammaN_aux4}, we have
\[\lim_{n\to \infty}\PP[A_{\delta_n}]=N_0.\]
This gives~\eqref{eqn::limt_prob}. Thus,~\eqref{eqn::final_aux1} and~\eqref{eqn::mart_gammaN_aux4} implies that
\begin{equation}\label{eqn::mart_gammaN_aux5}
\lim_{n\to\infty}\E[f(\gamma_N^{\delta_n})\cond A_{\delta_n}]=\E\left[f(\ell_N[0,\tau^{\ell}_N])\right].
\end{equation}
Thus, the conditional law of $\gamma_N^{\delta_n}$ given $A_{\delta_n}$ converges to $\ell_N$.

Next, by induction hypothesis, we have that the conditional law of $(\gamma_1^{\delta_n},\ldots,\gamma_{N-1}^{\delta_n})$ converges to multiple $N-1$-$\SLE_3\left(-\frac{3}{2},-\frac{3}{2}\right)$ on $X_{N-1}(\hat\Omega; x_1,\ldots,x_{N-1}; \gamma_N(\tau),x_{N+2})$. Combining with Theorem~\ref{thm::multiple_odd}, Theorem~\ref{thm::uniqueness_multiple} and~\eqref{eqn::weight_ell} and~\eqref{eqn::mart_gammaN_aux5}, we have that the random curves $(\gamma_1,\ldots,\gamma_N)$ is the multiple $N$-$\SLE_3\left(-\frac{3}{2},-\frac{3}{2}\right)$ on $X_{N}(\Omega; x_1,\ldots,x_{N}; x_{N+1},x_{N+2})$.

This completes the induction and hence completes the proof.
\end{proof}

\newcommand{\etalchar}[1]{$^{#1}$}

\end{document}